\documentclass{article}

\pdfoutput=1
\PassOptionsToPackage{numbers, compress}{natbib}

\usepackage[preprint]{neurips_2022}

\usepackage{microtype}
\usepackage{graphicx}
\usepackage{booktabs} 
\usepackage{comment}

\usepackage{dsfont}
\usepackage{amsmath,amsfonts,amssymb,mathtools, amsthm}
\usepackage[noend]{algorithmic}
\usepackage[ruled,vlined,linesnumbered]{algorithm2e}
\usepackage{tikz, subcaption}

\newtheorem{theorem}{Theorem}

\newtheorem{corollary}{Corollary}
\newtheorem{lemma}{Lemma}
\newtheorem{assumption}{Assumption}

\newtheorem{proposition}{Proposition}

\newtheorem{definition}{Definition}
\newtheorem{remark}{Remark}
\newtheorem*{lemma*}{Lemma}

\newtheorem{example}{Example}






\usepackage[utf8]{inputenc} 
\usepackage[T1]{fontenc}    
\usepackage{hyperref}       
\usepackage{url}            
\usepackage{booktabs}       
\usepackage{amsfonts}       
\usepackage{nicefrac}       
\usepackage{microtype}      
\usepackage{xcolor}         

\usepackage[colorinlistoftodos,bordercolor=orange,backgroundcolor=orange!20,linecolor=orange,textsize=scriptsize]{todonotes}
\newcommand{\ilyas}[1]{\todo[inline]{\textbf{Ilyas: }#1}}

\input{preamble_neurips.tex}

\title{Sharp Analysis of Stochastic Optimization under Global Kurdyka-\L ojasiewicz Inequality}
%
\makeatletter
\newcommand{\printfnsymbol}[1]{%
  \textsuperscript{\@fnsymbol{#1}}%
}
\makeatother

\author{%
}

\author{\textbf{Ilyas Fatkhullin}\thanks{First two authors have equal contribution.} \\ ETH AI Center \& ETH Zurich \and \textbf{Jalal Etesami}\printfnsymbol{1} \\ EPFL\thanks{École polytechnique fédérale de Lausanne} \and  \textbf{Niao He} \\ ETH Zurich \and \textbf{Negar Kiyavash} \\ EPFL\printfnsymbol{2} }

\begin{document}

\maketitle

\begin{abstract}
We study the complexity of finding the global solution to stochastic nonconvex optimization when the objective function satisfies global Kurdyka-{\L}ojasiewicz (K{\L}) inequality and the queries from stochastic gradient oracles satisfy mild expected smoothness assumption.  We first introduce a general framework to analyze Stochastic Gradient Descent (SGD) and its associated nonlinear dynamics under the setting.  As a byproduct of our analysis, we obtain a sample complexity of  $\mathcal{O}(\epsilon^{-(4-\alpha)/\alpha})$ for SGD when the objective satisfies the so called $\alpha$-P{\L} condition, where $\alpha$ is the degree of gradient domination. Furthermore, we show that a modified SGD with variance reduction and restarting (PAGER) achieves an improved sample complexity of $\mathcal{O}(\epsilon^{-2/\alpha})$ when the objective satisfies the average smoothness assumption. This leads to the first optimal algorithm for the important case of $\alpha=1$ which appears in applications such as policy optimization in reinforcement learning. 
\end{abstract}



\section{Introduction}\label{sec:intro}
\vspace{-.2cm}
Nonconvex optimization problems are ubiquitous in machine learning domains such as training deep neural networks \citep{Goodfellow_2016} or policy optimization in reinforcement learning \citep{DeterministicPolicyGradient}. Stochastic Gradient Descent (\algname{SGD}) and its variants are driving the practical success of machine learning approaches. Naturally, understanding the limits of performance of \algname{SGD} in the nonconvex setting has become an important avenue of research in recent years \cite{ghadimi2013stochastic,arjevani2019lower,khaled2020better,nguyen2017sarah,Gower_SGD_QC_PL_21,cutkosky2019momentum,zhou2021understanding}. 

We are interested in solving the unconstrained \textit{stochastic}, \textit{nonconvex} optimization problem of the form 
\begin{equation}\label{eq:problem_online}
	\min_{x\in\mathbb{R}^d}  f(x) := \Expu{\xi \sim \cD}{f_{\xi}(x)}  ,
\end{equation}
where $f(\cdot )$ is smooth and possibly nonconvex, and $\xi$ is a random vector drawn from a distribution $\cD$. Moreover, we are interested in an important special case of \eqref{eq:problem_online}, when the expectation can be written as the average of $n$ smooth functions:
\begin{equation}\label{eq:problem_finite_sum}
	\min_{x\in\mathbb{R}^d} \Big[ f(x) := \frac{1}{n} \sum \limits_{i=1}^{n} f_{i}(x) \Big].
\end{equation}
For a general nonconvex differentiable objective $f:\mathbb{R}^d\rightarrow \mathbb{R}$, finding a global minimum of $f$ is in general intractable \citep{nemirovskij1983problem,vavasis1995complexity}. There are two common  strategies to analyze optimization methods for nonconvex functions. The first one is to scale down the requirements on the solution of interest from global optimality to some relaxed version, e.g., first-order stationary point. However, such solutions do not exclude the possibility of approaching a suboptimal local minima or a saddle point. Another approach is to study nonconvex problems with additional structural assumption in the hope of convergence to global solutions.
In this direction, several relaxations of convexity have been proposed and analyzed, for instance, star convexity, quasar-convexity, error bounds condition, restricted secant inequality, and quadratic growth \citep{Karimi_PL,gower2019sgd,Gower_SGD_QC_PL_21}. Many of the aforementioned relaxations have limited application in real-world problems. 

Recently, there has been a surge of interest in the analysis of functions satisfying the so-called Kurdyka-{\L}ojasiewicz (K{\L}) inequality \citep{bolte2007lojasiewicz,bolte2014proximal}. Of particular interest is the family of functions that satisfy global K{\L} inequality. Specifically, we say that $f(\cdot)$ satisfies \emph{(global) K{\L} inequality} if there exists some continuous function $\phi(\cdot)$ such that $||\nabla f(x)|| \geq \phi\rb{ f(x) - \inf_{x} f(x) }$ for all $x \in \R^d $. If this inequality is satisfied for $\phi(t)=\sqrt{2\mu}\ t^{1/\alpha}$, then we say that \textit{(global) $\al$-P{\L} condition} is satisfied for $f(\cdot)$. The special case of K{\L} condition, $2$-P{\L}, often referred as Polyak-{\L}ojasiewicz or P{\L} condition, was originally discovered independently in the seminal works of B. Polyak, T. Le{ž}anski and S. {\L}ojasiewicz \cite{polyak1963gradient,Lezanski_1963,lojasiewicz1959probleme,lojasiewicz1963topological}. Notably, this class of problems has found many interesting emerging applications in machine learning,  for instance, policy gradient (PG) methods in reinforcement learning \citep{Mei_SoftMax_Entropy_PG,Agarwal_TheoryPolicyGradient2020,yuan2021general}, generalized linear models \citep{Mei_nonuniform_KL}, over-parameterized neural networks \citep{Allen_Zhu_overparamNN,Zeng_DL_KL_2018}, linear quadratic regulator in optimal control \citep{LQR_discrete,LQR_continuous}, and low-rank matrix recovery \citep{Low_rank_recovery_Bi}. 

 Despite increased popularity of K{\L} and $\al$-P{\L} assumptions, the analysis of stochastic optimization under it remains limited and the majority of works focus on deterministic gradient methods. Indeed until recently, only the special case of $\al$-P{\L} with  $\alpha = 2$ was mainly addressed in the literature \cite{Karimi_PL,Gower_SGD_QC_PL_21,khaled2020better,vaswani2019fast,reddi2016stochastic}. In this paper, we study the sample complexities of stochastic optimization for the broader class of nonconvex functions with global K{\L} property.


\vspace{-.2cm}
\subsection{Related Works and Open Questions}

 \paragraph{Stochastic gradient descent.} A plethora of existing works has studied the sample complexity of \algname{SGD} and its variants for finding an $\epsilon$-stationary point of general nonconvex function $f$, that is, a point $x\in\mathbb{R}^d$ for which $\mathbb{E}[||\nabla f(\hat{x})||]\leq\epsilon$. For instance, \cite{ghadimi2013stochastic} showed that for a smooth objective (one with Lipschitz gradients) under bounded variance (BV) assumption, \algname{SGD} with properly chosen stepsizes reaches an $\epsilon$-stationary point with the sample complexity of $\mathcal{O}(\epsilon^{-4})$. Recently, \cite{khaled2020better,yuan2021general} further extended the result under a much milder \textit{expected smoothness} (ES) assumption on stochastic gradient.
While this sample complexity is known to be optimal for general nonconvex functions, a naive application of this result to the function value using $\al$-P{\L} condition would lead to a suboptimal $\mathcal{O}(\epsilon_f^{\nfr{-4}{\al}})$ sample complexity for finding an $\epsilon_f$-optimal solution, i.e., $\Exp{f(x) - f^{\star}} \leq \epsilon_f$.
Recently, \cite{fontaine2021convergence} studied \algname{SGD} and established convergence rates for $\al$-P{\L} functions under BV assumption. Their  sample complexity result is $\mathcal{O}(\epsilon_f^{\nfr{-(4- \al)}{\al}})$ in our notation. Later \citep{li2021convergence} considered \algname{SGD} scheme with random reshuffling under local and global K{\L} conditions and provided convergence in the iterates for $\al \in (1, 2]$.  We note that our proof techniques are different from \citep{fontaine2021convergence} and \citep{li2021convergence} and are not limited merely to the case of BV assumption. 
In this work, we will answer the following open question:
\begin{quote}
    \textit{What is the exact performance limit of \algname{SGD} under global K{\L} condition and a more practical model of stochastic oracle?}
\end{quote}

\vspace{-.2cm}
\paragraph{Variance reduction.} There has been extensive research on development of algorithms which improve the dependence on $n$ and/or $\epsilon$ for both problems \eqref{eq:problem_online} and \eqref{eq:problem_finite_sum} (over simple methods such as \algname{SGD} and Gradient Descent (\algname{GD}) ). One important family of techniques\footnote{Another independent direction is to make use of higher order information \citep{Nesterov_Polyak_2006,fang19a_escaping_saddles,Arjevani_2nd_Order_Stoch}.} is \textit{variance reduction}, which has emerged from the seminal works of Blatt et. al \cite{Blatt_IGM_2007}. The main idea of \textit{variance reduction} is to make use of the stochastic gradients computed at previous iterations to construct a better gradient estimator at a relatively small computational cost. Various modifications, generalizations, and improvements of the variance reduction technique appeared in subsequent work, for instance, \citep{SAG_2012,SVRG_2013,SAGA_2014} to name a few. 

\vspace{-.2cm}
\paragraph{Finite-sum case.}  A number of recently proposed algorithms such as \algname{SNVRG} \cite{SNVRG}, \algname{SARAH} \cite{SARAH}, \algname{STORM} \cite{cutkosky2019momentum}, \algname{SPIDER} \cite{SPIDER}, and \algname{PAGE}  \cite{PAGE} achieve the sample complexity $\cO\big( n +  \fr{ \sqrt{n} }{\epsilon^2} \big)$ when minimizing a general nonconvex function with finite sum structure \eqref{eq:problem_finite_sum}. This result is also known to be optimal in this setting \cite{PAGE}. \cite{SARAH_ADMM} studies \algname{SARAH} in finite sum case under local K{\L} assumption and proves convergence in the iterates. The study in \cite{SARAH_ADMM} is only asymptotic analysis and the dependence on the parameters $\kappa$ and $n$, which are important in practice for quantifying the improvement over \algname{GD} and \algname{SGD} are ignored. \cite{PGD_w_SVRG_KL} proposes an \algname{SVRG}-based algorithm for K{\L} functions and \cite{SpiderBoostMomentum,SARAH,SARAH2} study other variance reduction techniques, but they only  analyze the special case $\al = 2$. Under 2-P{\L} condition, these methods further improve to $\cO\big(\rb{ n + \kappa \sqrt{n} } \log(\fr{1}{\epsilon_f}) \big)$ sample complexity\footnote{$\kappa = \nfr{\cL}{\mu}$ is the analogue of condition number, $\cL$ is defined in Assumption~\ref{as:avg_smoothness_page}. } for finding an $\epsilon_f$-optimal solution. However, it is not clear if it is possible to provide any non-asymptotic guaranties for variance reduced methods under $\al$-P{\L} condition for any $\al \in [1, 2)$.
In our work,  we will answer the following open question: 
\begin{quote}
\textit{ What is the extent of improvement any variance reduction scheme can provide under global $\alpha$-P{\L} condition for finite-sum objectives of the form \eqref{eq:problem_finite_sum}? }
\end{quote}

\vspace{-.2cm}
\paragraph{Online/streaming case.} While variance reduction methods have been initially designed for problems of the form \eqref{eq:problem_finite_sum}, it was later discovered that they also improve over \algname{SGD} when solving \eqref{eq:problem_online} \cite{SCSG, SVRG_plus}\footnote{Under additional assumptions such as smoothness of individual functions $f_{\xi}(\cdot)$ or even milder condition such as \textit{average} $L$-\textit{smoothness} (Assumption~\ref{as:avg_smoothness_page}).}. The analysis of these methods was obtained for \textit{general nonconvex} functions (for minimizing the norm of the gradient, $\Exp{\norm{\nabla f(\hat{x})}} \leq \epsilon$) and later extended to \textit{$2$-P{\L} objectives} for minimizing the function value, $\Exp{f(x) - f^{\star}} \leq \epsilon_f$. For example, the methods in \citep{SNVRG,SARAH,SPIDER,PAGE} achieve $\cO(\epsilon^{-3})$ complexity improving over $\cO(\epsilon^{-4})$ complexity of \algname{SGD} for finding an $\epsilon$-stationary point. Under the $2$-P{\L} condition, these results can be extended to global convergence with $\cO(\epsilon_f^{-1})$ sample complexity \citep{PAGE}. However, in contrast to a general nonconvex case, variance reduction under $2$-P{\L} assumption does not show any improvement over \algname{SGD} in terms of $\epsilon_f$. We highlight that all existing analysis of variance reduction under $\al$-P{\L} condition \textit{is restricted only to a special case $\al = 2$}. We refer the reader to Appendix~\ref{sec:variace_reduction_appendix}, where we elaborate on the key difficulties in the analysis for the cases $\al \in [1,2)$. Since the direct analysis for $\al \in [1,2)$ is challenging, in order to obtain the global convergence in this setting, one could naively translate the complexity for finding a stationary point of a general nonconvex function (which is $\cO (\epsilon^{-3})$) to convergence in a function value by using $\al$-P{\L} condition: $ \sqrt{2\mu} \rb{ f(\hat{x}) - f^{\star} }^{\nfr{1}{\al}} \leq  ||\nabla f(\hat{x})||$. This would result in $\cO \big( \epsilon_f^{\nfr{-3}{\al}} \big)$ sample complexity. However, there are two serious issues with this approach. First, this complexity does not provide any improvement over \algname{SGD} in the most interesting practical case  $\al = 1$ and gives strictly worse result for all $\al > 1$. Second, the guarantees for general nonconvex optimization hold on average, in the sense that the point $\hat{x}$ is sampled uniformly from all the iterates of the algorithm. It would be more desirable to instead derive last iterate convergence guarantees under K{\L} ($\al$-P{\L}) condition.  
In this work, we will address the following open question: 
\begin{quote}
\textit{ Is it possible to accelerate the $ \cO\big( \epsilon_f^{\nfr{-(4-\al)}{\al}} \big)$ sample complexity of \algname{SGD} under global $\alpha$-P{\L} condition for stochastic objectives of the form \eqref{eq:problem_online}? }
\end{quote}

\vspace{-.2cm}
\subsection{Contributions} In this work, we provide an extensive analysis of stochastic optimization under global K{\L} condition and answer all the above questions. More precisely, our contributions are as follows
\begin{itemize}
    \item We provide a new framework for the analysis of the dynamics of \algname{SGD} under global K{\L} condition (see Section \ref{sec:main}). It is based on the analysis of \algname{SGD} dynamic which is governed by a recursive inequality (see Equation \eqref{eq:main_fixed}). As a result of this analysis, we introduce a set of conditions (see Theorem \ref{lemma_man}) for designing proper stepsizes to guarantee convergence.
    
    \item Using this framework, we provide sharp analysis of \algname{SGD} under a general ES assumption (Assumption \ref{ass:k_ES}) and demonstrate that the sample complexity $\small{\cO\big( \epsilon_f^{\nfr{-(4-\al)}{\al}} \big)}$ is tight for the dynamical system describing \algname{SGD}. 


    \item Next, we propose \algname{PAGER}, a new variance reduction scheme with parameter restart. A carefully chosen sequence of parameters of \algname{PAGER} allows the algorithm to adapt to the nonconvex geometry of the problem and establish state-of-the-art convergence guarantees for minimizing $\al$-P{\L} functions. In online setting \eqref{eq:problem_online}, we obtain $\cO\rb{ \epsilon_f^{\nfr{-2}{\al}} }$ sample complexity of \algname{PAGER}, which beats $\cO\rb{ \epsilon_f^{\nfr{-(4-\al)}{\al}} }$ complexity of \algname{SGD} for the whole spectrum of parameters $\al\in [1,2)$. In particular, for the important special case of $1$-P{\L}, this leads to the first optimal algorithm with $O(\epsilon_f^{-2})$ sample complexity, which already matches with the lower bound known for stochastic convex optimization~\cite{nemirovskij1983problem}.  

    \item Furthermore, we obtain faster rates with \algname{PAGER} in finite sum case \eqref{eq:problem_finite_sum}, providing the first acceleration over \algname{GD} and \algname{SGD} under $\al$-P{\L} condition.  

\end{itemize}
In Table~\ref{tbl:complexity_f}, we summarize the sample complexity results for stochastic optimization under $\al$-P{\L} and BV assumptions. We also establish sharp convergence results for convergence in the iterates to the set $X^{\star}$ of optimal points and provide a summary in Table~\ref{tbl:complexity_x} in the Appendix. 

\begin{table}[h]
	\caption{Summary of sample complexity results for $\al$-P{\L} functions (Assumption~\ref{as:lojasiewicz}) under average $\cL$-smoothness (Assumptions~\ref{as:avg_smoothness_page}) and bounded variance (Assumptions~\ref{as:UBV}). Quantities: $\al$ = P{\L} power; $\mu$ = P{\L} constant; $\kappa = \nfr{\cL}{\mu}$; $\sigma^2$ = variance. The entries of the table show the expected number of stochastic gradient calls to achieve  $\Exp{f(x_k) - f^\star} \leq \epsilon_f$ . \vspace{.1cm} }
	\label{tbl:complexity_f}
	\footnotesize
	\centering
	\begin{tabular}{|c|c|c|c|}
		\hline
		\bf Method & \bf Finite sum case & \bf Online case \\
		\hline
		\begin{tabular}{c} \algname{GD} \end{tabular}  & $\cO\rb{n \kappa \rb{\fr{1}{\epsilon_f}}^{\fr{2-\al}{\al}} } $ & N/A \\
		\hline
		\begin{tabular}{c}\algname{SGD} \end{tabular}  & $\cO\rb{ \fr{\kappa\sigma^2}{\mu} \rb{\fr{1}{\epsilon_f}}^{\fr{4-\al}{\al}}}$ & $  \cO\rb{\fr{\kappa\sigma^2}{\mu} \rb{\fr{1}{\epsilon_f}}^{\fr{4-\al}{\al}}}$ \\
		\hline
		\rowcolor{LightCyan}
		\begin{tabular}{c}
		 \algname{PAGER} \end{tabular}  & $ \cwO\rb{ n + \sqrt{n}\kappa  \rb{\fr{1}{\epsilon_f}}^{\fr{2-\al}{\al}} }$ (new) &  $ \cO\rb{ \rb{ \fr{\sigma^2}{\mu} +  \kappa^2  } \rb{\fr{1}{\epsilon_f}}^{\fr{2}{\al}} }$  (new) \\
		\hline
	\end{tabular}
\end{table}

\section{Assumptions and Discussion}\label{sec:ass}
In this section, we introduce the assumptions we make throughout the paper.
\begin{assumption}\label{ass:l-smooth}
The gradient of $f(\cdot)$ is Lipschitz continuous, that is, for all $x$ and $y$, $\norm{\nabla f(x)-\nabla f(y)}\leq L\norm{x-y}$,
$L>0$ is referred to as the Lipschitz constant. 
\end{assumption}

Furthermore, we assume that the objective function $f$ is lower bounded, i.e., $f^* := \inf_{x} f(x)>-\infty$, and it satisfies the following inequality

\begin{assumption}[global K{\L} or global Kurdyka-{\L}ojasiewicz]\label{ass:kl_our}
Let $\phi: \R^+ \rightarrow \mathbb{R}^+$ be a continuous function such that $\phi(0)=0$ and $\phi^2(\cdot)$ is convex. The function $f(\cdot)$ is said to satisfy global Kurdyka-{\L}ojasiewicz inequality if 
\begin{align}\label{eq:ass-kl}
       ||\nabla f(x)|| \geq \phi\rb{ f(x) - f^* } \quad \text{for all } x \in \R^d. 
\end{align}
\end{assumption}

\begin{assumption}[$\al$-P{\L} or Polyak-{\L}ojasiewicz] \label{as:lojasiewicz} 
There exists $\al \in[1,2]$ and $\mu>0$ such that 
\begin{eqnarray}
    \norm{\nabla f(x)}^\al \geq (2 \mu)^{\nfr{\al}{2}}\rb{f(x) - f^* } \quad \text{for all } x \in \R^d.
\end{eqnarray}
We refer to $\al$ as the P{\L} power and $\mu$ as the P{\L} constant.
\end{assumption}
It is straightforward to see that the $\al$-P{\L} is a special case of K{\L} with $\phi(t)=\sqrt{2\mu}\ t^{1/\alpha}$.

\paragraph{Connections with other assumptions.} Another commonly adopted way to define the global K{\L} property is to assume that $\rho^{\prime}\rb{f(x)-f^{\star} } \cdot \norm{\nabla f(x) } \geq 1$ for all $x\in \R^d$, where $\rho(t)$ is called a disingularizing function and $\rho^{\prime}(\cdot)$ denotes its derivative. Moreover, disingularizing function satisfies the following conditions, it is continuous, concave, $\rho(0)=0$, and $\rho^\prime(t)>0$ \cite{li2021convergence}.

If the above assumption holds for $\rho(t) := \fr{1}{\theta} t^{\theta}$ and $\theta > 0$, then Assumption~\ref{as:lojasiewicz} is satisfied with P{\L} power $\al = \fr{1}{1 - \theta}$. However, $\al$-P{\L} condition is more general since it allows to consider the case $\al = 1$. For example, consider the function of one variable $f(x)  = (e^x + e^{-x})/2 - 1$, then $| f^{\prime}(x) | \geq  f(x)$ for all $x$. Thus $f(x)$ satisfies Assumption~\ref{as:lojasiewicz} with $\al=1$ and $\mu = \nfr{1}{2}$. Moreover, this function is convex, but it does not satisfy inequality $\rho^{\prime}\rb{f(x)-f^{\star} } \cdot \norm{\nabla f(x) } \geq 1$ for any choice of $\rho(t)$. 

We also provide several non-convex problems for which $\al$-P{\L} holds with $\al\in[1,2]$ in the Appendix~\ref{sec:examples_appendix}. Other forms of $\phi(t)$ also appear in practice, e.g., squared cross entropy loss function satisfies the KL condition with $\phi(t)=\min\{t,\sqrt{t}\}$, \cite{scaman2022convergence}.

The intuition behind the special case $\al = 1$ is that the function is allowed to be flat near the set of optimal points $X^{\star} = \argmin_x f(x)$. 

\begin{assumption}[$k$-ES, Expected Smoothness of order $k$]\label{ass:k_ES}
The stochastic gradient estimator $g_k(x,\xi)$ is an unbiased estimate of the gradient $\nabla f(x)$ at any given point $x$ and its second moment satisfies
\begin{align}\label{eq:ecpected_smoothness}
  \Exp{ \sqnorm{g_k(x,\xi) } } \leq 2A\cdot h\big(f(x) - f^*\big) + B\cdot||\nabla f(x)||^2 + \fr{C}{b_k},\quad \text{for all }  x \in \R^d,
\end{align}
where  $A, B, C $ are non-negative constants. $h:\mathbb{R}^+\rightarrow\mathbb{R}^+$ is a concave continuously differentiable function with $h'(t)\geq0$, $h(0)=0$. The expectation is taken over random vector $\xi \sim \cD$. We call $b_k$ the cost of such estimator.
\end{assumption}

This assumption encompasses previous assumptions in the literature. For instance,  it is straightforward to see that an estimator satisfies the standard \textit{bounded variance assumption} \cite{ghadimi2013stochastic} when $h(t)=0, B=1$, and $b_k=1$ in \eqref{eq:ecpected_smoothness}. 
Gradient estimators with \textit{relaxed growth assumption} \cite{bertsekas2000gradient,bottou2018optimization} are also special cases of \eqref{eq:ecpected_smoothness} for $h(t)=0$ and $b_k=1$. 
A closely related assumption to the relaxed growth was introduced in \citep{vaswani2019fast} which holds when $h(t)=0$ and $C=0$ in \eqref{eq:ecpected_smoothness}.
\textit{Expected smoothness assumption} \citep{khaled2020better,gower2019sgd,yuan2021general} is the closest assumption to $k$-ES and it holds when $h(t)=t$ and $b_k=1$.  Notably, ES assumption is satisfied in practical scenarios such as mini-batching, importance sampling and compressed communication \cite{khaled2020better}.  More recently, it has been shown that PG method with softmax policies and log barrier regularization can be modeled using ES assumption \citep{yuan2021general}. Note that due to the first term in \eqref{eq:ecpected_smoothness}, i.e., $2A\cdot h(f(x)-f^*)$, the second moment can be large when the objective gap at $x$ is large. Such property is not captured by standard \textit{bounded variance} (Assumption~\ref{as:UBV}). The flexibility and advantages of introducing such additional term are elaborated in detail in the literature \cite{gower2019sgd,khaled2020better,grimmer2019convergence,yuan2021general}. 

We highlight two special cases for the sequence $b_k$: $b_k = \Theta( k^\tau) $ with $\tau \geq 0$ and $b_k = \Theta( q^k)$ with $q > 1$. 
For example, Monte Carlo sampling and mini-batching allow us to design estimators with such $\cb{b_k}_{k\geq 0}$ sequences.  When a gradient estimator satisfies Assumption \ref{ass:k_ES}, unless $b_k$ is bounded for all $k$, it essentially means that we have a mechanism to reduce its variance.  
More precisely, the variance decreases according to the sequence $1/b_k$. 
As we show in Section \ref{sec:variace_reduction}, if such estimator exists, it results in a better convergence rate compared to vanilla SGD and the improvement is captured by sequence $b_k$. On the other hand, usually the access to such estimator comes with a \textit{cost} proportional to $b_k$, e.g., mini-batch setting described in Section~\ref{sec:variace_reduction}. Thus we refer to $b_k$ as the \textit{cost} of the estimator $g_k(x,\xi)$.

\section{Stochastic Gradient Method}\label{sec:main}
\vspace{-.2cm}
Algorithm \ref{alg:sgd} summarizes the steps of a slightly modified \algname{SGD} which we analyze in this work. We call this algorithm \algname{SGD} with restarts.\footnote{Note that if we set $K = 1$, then Algorithm~\ref{alg:sgd} reduces to \algname{SGD} with constant step-size. }
This algorithm updates the point $x$ for $T$ number of iterations within an inner-loop. Note that, in the inner-loop, the step-size remains unchanged and the iterates are updated via $x_{t+1} = x_t - \eta g_k(x_t,\xi_t)$, where $\eta$ is the step-size and $\cb{\xi_t}_{t\geq0}$ are independent random vectors. The cost $b_k$ of the gradient estimator $g_k(x, \xi)$ remain the same within the inner loop of Algorithm~\ref{alg:sgd}.
\vspace{-.5cm}
\begin{algorithm}[h]
        \caption{\algname{SGD} with restarts}\label{alg:sgd}
        \begin{algorithmic}[1]
            \STATE Initialization: $x, T, K, \{\eta_k : k=0,...,K-1\}$
            \FOR{$k=0, \ldots, K -  1$}
		\STATE $\eta \leftarrow \eta_k$
		\FOR{$t=0, \ldots, T - 1 $}
	    \STATE 	$x \leftarrow x - \eta g_k(x, \xi_t)$ 
		\ENDFOR
		\ENDFOR
		\RETURN $x$
        \end{algorithmic}
\end{algorithm}


\subsection{Dynamics of SGD}\label{sec:g_derivation}
\vspace{-.2cm}
Let $\{{x}_t\}_{t\geq 0}$ be the sequence of points generated by the inner loop of Algorithm~\ref{alg:sgd}, and Assumptions \ref{ass:l-smooth}, \ref{ass:kl_our}, \ref{ass:k_ES} are satisfied. Then the dynamics of \algname{SGD} in the inner-loop of Algorithm~\ref{alg:sgd} is characterized by 
\begin{lemma}\label{lemma:dynamic_inner}
Under Assumptions \ref{ass:l-smooth}, \ref{ass:kl_our}, and \ref{ass:k_ES} with constant cost, i.e., $b := b_k$,  we obtain
\begin{align}\label{eq:main_fixed}
         \delta_{t+1}\leq \delta_t+a\eta^2\cdot h\big(\delta_t\big)-\frac{\eta}{2}\phi^2(\delta_t)+ \fr{d\eta^2}{b},
\end{align}
where $\delta_t := \Exp{f(x_t) - f^{\star}}$, $a := L A$, $d := \fr{L C}{2 }$, $\eta := \eta_k$.
\end{lemma}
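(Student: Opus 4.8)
The plan is to run the standard ``descent lemma'' computation for a single inner-loop step and then convert the resulting gradient-norm term into $\phi^2(\delta_t)$ via the K\L{} inequality. Fix an inner iteration $t$ and recall the update $x_{t+1} = x_t - \eta g_k(x_t,\xi_t)$. By $L$-smoothness (Assumption~\ref{ass:l-smooth}) and the induced quadratic upper bound,
\begin{align}
  f(x_{t+1}) \leq f(x_t) - \eta \lin{\nabla f(x_t), g_k(x_t,\xi_t)} + \frac{L\eta^2}{2}\sqnorm{g_k(x_t,\xi_t)}. \notag
\end{align}
Taking expectation conditioned on $x_t$ and using that $g_k$ is unbiased turns the cross term into $-\eta\sqnorm{\nabla f(x_t)}$, while the last term is controlled by $k$-ES (Assumption~\ref{ass:k_ES}) with constant cost $b$. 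This gives
\begin{align}
  \Exp{f(x_{t+1}) \mid x_t} \leq f(x_t) - \eta\rb{1 - \tfrac{LB\eta}{2}}\sqnorm{\nabla f(x_t)} + LA\eta^2 h\big(f(x_t)-f^\star\big) + \frac{LC\eta^2}{2b}. \notag
\end{align}

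Here the terms $LA\eta^2 h(\cdot)$ and $\tfrac{LC\eta^2}{2b}$ already match $a\eta^2 h(\cdot)$ and $\tfrac{d\eta^2}{b}$ with $a = LA$ and $d = \tfrac{LC}{2}$, so all that remains is to turn the gradient term into $-\tfrac{\eta}{2}\phi^2$. For this I would impose the (standard) step-size restriction $\eta \leq 1/(LB)$, which ensures $1 - \tfrac{LB\eta}{2} \geq \tfrac12$ so that the negative coefficient is at least $\tfrac{\eta}{2}$; the K\L{} inequality (Assumption~\ref{ass:kl_our}), $\sqnorm{\nabla f(x_t)} \geq \phi^2(f(x_t)-f^\star)$, then yields
\begin{align}
  \Exp{f(x_{t+1}) \mid x_t} \leq f(x_t) - \frac{\eta}{2}\phi^2\big(f(x_t)-f^\star\big) + LA\eta^2 h\big(f(x_t)-f^\star\big) + \frac{LC\eta^2}{2b}. \notag
\end{align}

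Finally I would subtract $f^\star$, take full expectation, and push it inside the two nonlinear terms using Jensen's inequality. This is where the precise regularity hypotheses matter: $h$ is concave, so $\Exp{h(f(x_t)-f^\star)} \leq h(\delta_t)$, whereas $\phi^2$ is convex (Assumption~\ref{ass:kl_our}), so $\Exp{\phi^2(f(x_t)-f^\star)} \geq \phi^2(\delta_t)$, and the $-\tfrac{\eta}{2}$ prefactor preserves the inequality direction. Combining these reproduces exactly \eqref{eq:main_fixed}. The main subtlety I anticipate is precisely this pair of Jensen applications in \emph{opposite} directions — one using concavity of $h$, the other convexity of $\phi^2$ — since the sign bookkeeping for the convex term (where the inequality flips after multiplication by the negative coefficient) is the one place an otherwise routine argument can break.
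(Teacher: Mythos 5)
Your proposal is correct and follows essentially the same route as the paper's proof: descent lemma, conditional expectation with unbiasedness and $k$-ES, the step-size restriction $\eta \leq \nfr{1}{(LB)}$ to make the gradient coefficient at least $\nfr{\eta}{2}$, the K{\L} inequality to replace $\sqnorm{\nabla f(x_t)}$ by $\phi^2(f(x_t)-f^\star)$, and the two opposite-direction Jensen applications (concavity of $h$, convexity of $\phi^2$) after taking full expectation. The sign bookkeeping you flag as the subtle point is handled correctly, exactly as in the paper.
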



Understanding the dynamics of this recursion, allows us to establish the global convergence of \algname{SGD}. 
Our approach consists of two main steps: i) Finding the stationary\footnote{Stationary point of a dynamic is its convergence point.} point of \eqref{eq:main_fixed} when the inequality is replaced by equality and for a fixed step-size, $\eta_k=\eta$, which we denote by $r(\eta)$. ii) Selecting the step-sizes $\{\eta_k\}$ and sequence $\{b_k\}_{k\geq 0}$, such that the corresponding stationary points  $\{r(\eta_k)\}_{k\geq 0}$ (defined below) converge to zero as $k$ increases. 
 
The stationary point of \eqref{eq:main_fixed} after replacing inequality with equality  must satisfy the equation:
\begin{align}\label{eq:statioanry}
         a\eta^2 h(t) + \fr{d\eta^2}{b} = \frac{\eta}{2}\phi^2(t).
\end{align}
Let us call this stationary point $r(\eta)$. To complete the first step, we approximate  $r(\eta)$  by a polynomial function of $\eta$. 
In other words, we find $\nu\in\mathbb{R}^+$ such that $r(\eta)=\Theta(\eta^{\nu})$. 

For the second step of our framework, we should design the stepsizes. 
Next result introduces a set of conditions that allow us to design the stepsizes, which will guarantee convergence. The detailed derivations are presented in the Appendix. 


\begin{theorem}\label{lemma_man}
Suppose there exist $\nu\geq0, \{\omega_j\}_{j\geq0}$, and $\zeta\geq0$ such that $\eta_k=\Theta(k^{-\zeta})$, $r(\eta_k)=\Theta(k^{-\zeta\nu})$, $|1-\omega_k|<1$, and
\begin{align}\label{eq:condition1}
    &1+a\eta^2_k h'\big(r(\eta_k)\big)-\eta_k\phi'\big(r(\eta_k)\big)\phi\big(r(\eta_k)\big)=1-\omega_k k^{-1}.
\end{align}
Then, $\delta_k\!\!=\!\!\mathcal{O}(k^{-\zeta\nu})$ and the iteration complexity of  Algorithm~\ref{alg:sgd} with $T\!\!\!=\!\!\Omega(1/\!\min_j \omega_j)$ is $\mathcal{O}(\!\epsilon_f^{-1/(\zeta\nu)})$. 
\end{theorem}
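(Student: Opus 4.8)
The plan is to read the inner-loop recursion \eqref{eq:main_fixed} as a one-dimensional discrete dynamical system and to show that its iterates track the slowly-moving fixed point $r(\eta_k)$. Write the update map as $G(t) := t + a\eta^2 h(t) - \tfrac{\eta}{2}\phi^2(t) + \tfrac{d\eta^2}{b}$, so that \eqref{eq:main_fixed} reads $\delta_{t+1}\le G(\delta_t)$ and, by \eqref{eq:statioanry}, $r(\eta)$ is precisely a fixed point, $G(r(\eta))=r(\eta)$. The enabling structural observation is that $G$ is \emph{concave}: the term $a\eta^2 h$ is concave because $h$ is concave (Assumption~\ref{ass:k_ES}), while $-\tfrac{\eta}{2}\phi^2$ is concave because $\phi^2$ is convex (Assumption~\ref{ass:kl_our}). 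Concavity lets me replace $G$ by its tangent at the fixed point with \emph{no} second-order remainder: for every $t$, $G(t)\le r(\eta)+G'(r(\eta))\,(t-r(\eta))$.

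First I would establish the inner-loop contraction. Fixing the outer index $k$ (so $\eta=\eta_k$, $b=b_k$ are constant throughout the inner loop), the tangent bound together with $\delta_{t+1}\le G(\delta_t)$ gives $\delta_{t+1}-r(\eta_k)\le G'(r(\eta_k))\,(\delta_t-r(\eta_k))$, and condition \eqref{eq:condition1} identifies the slope exactly as $G'(r(\eta_k))=1-\omega_k k^{-1}$. Since $|1-\omega_k|<1$ forces $0\le 1-\omega_k k^{-1}<1$ for $k\ge 2$, the factor is nonnegative, and iterating across the $T$ inner steps yields, for the gap $D_k$ at the end of the $k$-th outer loop (started from $D_{k-1}$),
\[
    D_k-r(\eta_k)\ \le\ \rb{1-\tfrac{\omega_k}{k}}^{T}\rb{D_{k-1}-r(\eta_k)},
    \qquad\text{i.e.}\qquad
    D_k\ \le\ (1-\rho_k)\,r(\eta_k)+\rho_k D_{k-1},
\]
with $\rho_k:=(1-\omega_k/k)^T$.

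Next comes the outer-loop tracking argument, which I expect to be the crux. Using $\rho_k\le e^{-T\omega_k/k}\le 1-\tfrac{T\omega_k}{2k}$ once $k$ exceeds a constant threshold, together with $r(\eta_k)=\Theta(k^{-\zeta\nu})\le c_2 k^{-\zeta\nu}$, I would prove $D_k\le C k^{-\zeta\nu}$ by induction. In the inductive step I expand $(k-1)^{-\zeta\nu}=k^{-\zeta\nu}(1-1/k)^{-\zeta\nu}\le k^{-\zeta\nu}(1+2\zeta\nu/k)$ and collect the $k^{-\zeta\nu-1}$ terms; the induction closes exactly when the contraction exponent dominates the drift of the target, namely when $\tfrac{T\omega_k}{2}>2\zeta\nu$. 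Because $\omega_k\ge \min_j\omega_j$, this is guaranteed by the hypothesis $T=\Omega(1/\min_j\omega_j)$, and the resulting constant $C=\sup_k \tfrac{(T\omega_k/2)\,c_2}{(T\omega_k/2)-2\zeta\nu}$ stays finite since $\omega_k<2$ is bounded; the finitely many indices $k\le k_0$ below the threshold only inflate $C$ by a constant factor. This delivers $\delta_k=\mathcal{O}(k^{-\zeta\nu})$.

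Finally, the complexity follows by inversion: to force $\delta_k\le\epsilon_f$ it suffices to run $K=\Theta(\epsilon_f^{-1/(\zeta\nu)})$ outer loops, and since each uses only $T=\Theta(1)$ inner iterations the total iteration count is $KT=\mathcal{O}(\epsilon_f^{-1/(\zeta\nu)})$. The hard part is genuinely the tracking induction: unlike a fixed-target contraction, here \emph{both} the per-loop contraction factor $1-\Theta(1/k)$ and the target $\Theta(k^{-\zeta\nu})$ drift with $k$, so everything hinges on the first-order comparison of consecutive powers of $k$ and on showing that the single quantitative hypothesis $T=\Omega(1/\min_j\omega_j)$ is precisely what makes the contraction out-run the target's drift; the concavity of $G$ is the technical simplification that makes the contraction step remainder-free.
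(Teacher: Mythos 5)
Your proposal is correct, and its first half is the same as the paper's: the paper also linearizes the recursion \eqref{eq:main_fixed} at the stationary point, using concavity of $h$ and convexity of $\phi^2$ to get $y_{t+1}\le y_t\big(1+a\eta_k^2h'(r(\eta_k))-\eta_k\phi'(r(\eta_k))\phi(r(\eta_k))\big)=y_t\,(1-\omega_k k^{-1})$ for $y_t=\delta_t-r(\eta_k)$; your packaging of this as concavity of the update map $G$ with slope $G'(r(\eta_k))$ fixed by \eqref{eq:condition1} is the identical step. Where you genuinely diverge is the outer-loop tracking. The paper unrolls the recursion over all stages, obtaining $\delta_t \le \prod_{i=1}^k |1-\tfrac{\omega_i}{i}|^{T} y_0 + \sum_{j=1}^{k-1}\big(r(\eta_j)-r(\eta_{j+1})\big)\prod_{i=j+1}^k |1-\tfrac{\omega_i}{i}|^{T} + r(\eta_k)$, bounds the products by powers of $\tfrac{j+1}{k+1}$ via $\sum_i i^{-1}\ge\log(k+1)$ (as in Lemma~\ref{ll:12}), picks $T=\lceil(\zeta\nu+1)/\omega\rceil$, and estimates the weighted telescoping sum with $(1+1/j)^{b}-1\le \tfrac{b/j}{1-b/j}$. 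You instead run a direct tracking induction $D_k\le Ck^{-\zeta\nu}$, pitting the per-stage contraction $1-\rho_k\gtrsim T\omega_k/(2k)$ against the target's drift $(k-1)^{-\zeta\nu}\le k^{-\zeta\nu}(1+2\zeta\nu/k)$, which closes precisely when $T\omega_k/2>2\zeta\nu$. Both routes require the same order $T=\Omega(\zeta\nu/\min_j\omega_j)$ and give the same rate; the paper's unrolling produces an explicit nonasymptotic expression separating the contributions of the initial gap and of the increments of $r(\eta_j)$, while your induction avoids estimating the weighted sum altogether and makes transparent why that threshold on $T$ is exactly what lets the contraction outrun the drift. (Minor shared looseness: at $k=1$ the factor $1-\omega_1$ may be negative, so the inner loop cannot be composed sign-blind — the paper inserts absolute values, you absorb finitely many stages into the constant; both are fixable since $\delta_t\ge 0$ bounds $y_t$ below by $-r(\eta_k)$.)
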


As a consequence of Theorem \ref{lemma_man}, we present the iteration complexity of \algname{SGD} for $\al$-P{\L} functions.


\begin{corollary}\label{corr:main1}
Consider a special case of Assumption~\ref{ass:k_ES} with $h(t)=t^\beta$ and $b_k=k^{\tau}$, where $\beta\in(0,1]$ and $\tau\geq0$. 
Suppose the objective function $f$ satisfies Assumptions \ref{ass:l-smooth} and \ref{as:lojasiewicz}.
Let $\gamma:=\alpha\beta$.
Then, for any $\epsilon_f > 0$, Algorithm \ref{alg:sgd} returns a point $x$ with $\Exp{f(x) - f^{\star}} \leq \epsilon_f$ after $N := K \cdot T $ iterations.

i) If $\gamma=2$ ($\alpha=2$ and $\beta=1$), we have
\begin{align*}
   & N = \mathcal{O}(\epsilon_f^{-\frac{1}{1+\tau}}), \ \text{with}\ \eta_k=\Theta(k^{-1}).
\end{align*}

ii) If $\gamma<2$, we have
\begin{align*}
    & N = \mathcal{O}\big(\epsilon_f^{-\frac{4-\alpha}{\alpha(\tau+1)}}\big) \ \text{with}\ \eta_k=\Theta(k^{-\frac{\tau+1}{2-\alpha/2}+\tau})\ \text{if}\  \tau\leq \frac{\gamma}{4-\alpha-\gamma}, and \\
    & N = \mathcal{O}\big(\epsilon_f^{-\frac{4-\alpha-\gamma}{\alpha}}\big) \ \text{with}\ \eta_k=\Theta(k^{-\frac{2-\gamma}{4-\alpha-\gamma}})\ \text{if}\ \tau> \frac{\gamma}{4-\alpha-\gamma}.
\end{align*}
\end{corollary}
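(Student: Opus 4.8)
The plan is to verify the hypotheses of Theorem~\ref{lemma_man} for each prescribed step-size schedule and then read off the complexity $\mathcal{O}(\epsilon_f^{-1/(\zeta\nu)})$. First I would specialize the stationarity equation \eqref{eq:statioanry}. Substituting $\phi^2(t)=2\mu t^{2/\alpha}$, $h(t)=t^\beta$, and $b=k^\tau$, and dividing by $\eta_k$, the stationary point $r(\eta_k)$ solves $\mu\, t^{2/\alpha}=a\eta_k t^\beta+d\eta_k k^{-\tau}$. The right-hand side is a sum of two competing monomials: a gradient-domination term $a\eta_k t^\beta$ and a variance term $d\eta_k k^{-\tau}$. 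Writing $\eta_k=\Theta(k^{-\zeta})$ and positing $r(\eta_k)=\Theta(k^{-\zeta\nu})$, I would determine $\nu$ and then $\zeta$ by a two-step dominant-balance argument.

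Step one (finding $\nu$): with $r=\Theta(k^{-p})$, $p=\zeta\nu$, the two right-hand terms have orders $-\zeta-\beta p$ and $-\zeta-\tau$, so the gradient-domination term dominates exactly when $p\le\tau/\beta$. In this regime (\emph{Regime A}, which turns out to correspond to large $\tau$) I balance $\mu t^{2/\alpha}\asymp a\eta_k t^\beta$ and, using $2/\alpha-\beta=(2-\gamma)/\alpha$, obtain $r(\eta_k)=\Theta(\eta_k^{\alpha/(2-\gamma)})$, i.e. $\nu=\alpha/(2-\gamma)$. In the complementary \emph{Regime B} ($p>\tau/\beta$) the variance term dominates; balancing $\mu t^{2/\alpha}\asymp d\eta_k k^{-\tau}$ gives $r(\eta_k)=\Theta(k^{-\alpha(\zeta+\tau)/2})$, i.e. $\zeta\nu=\alpha(\zeta+\tau)/2$.

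Step two (fixing $\zeta$): condition \eqref{eq:condition1} is exactly the statement that the derivative of the one-step map $G(t)=t+a\eta_k^2 h(t)-\tfrac{\eta_k}{2}\phi^2(t)+d\eta_k^2 k^{-\tau}$ at its fixed point equals $1-\omega_k k^{-1}$. Computing $1-G'(r)=\eta_k\phi'(r)\phi(r)-a\eta_k^2 h'(r)=(2\mu/\alpha)\eta_k r^{2/\alpha-1}-a\beta\eta_k^2 r^{\beta-1}$ and eliminating $\mu r^{2/\alpha}$ via the stationarity relation yields the clean identity $\omega_k k^{-1}=\tfrac{1}{r}\big[(2/\alpha-\beta)\,a\eta_k^2 r^{\beta}+(2/\alpha)\,d\eta_k^2 k^{-\tau}\big]$, both of whose terms are positive for $\gamma<2$. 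The dominant of these two contributions must scale as exactly $\Theta(k^{-1})$: if it were of strictly smaller order then $\omega_k\to0$ and the inner length $T=\Omega(1/\min_j\omega_j)$ would blow up, whereas if it were of strictly larger order then $\omega_k\to\infty$, violating $|1-\omega_k|<1$. Setting the order to $-1$ in each regime gives a linear equation in $\zeta$: in Regime A, $2\zeta+p(\beta-1)=1$ together with $p=\alpha\zeta/(2-\gamma)$ gives $\zeta=(2-\gamma)/(4-\alpha-\gamma)$ and $\zeta\nu=\alpha/(4-\alpha-\gamma)$; in Regime B, $2\zeta+\tau-p=1$ with $p=\alpha(\zeta+\tau)/2$ gives $\zeta=(2-\tau(2-\alpha))/(4-\alpha)$ and $\zeta\nu=\alpha(\tau+1)/(4-\alpha)$. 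Inverting $\zeta\nu$ reproduces the claimed exponents $\tfrac{4-\alpha-\gamma}{\alpha}$ and $\tfrac{4-\alpha}{\alpha(\tau+1)}$; the threshold at which the dominant term switches is precisely $\tau=\gamma/(4-\alpha-\gamma)$, and the two expressions agree there. The degenerate case $\gamma=2$ must be handled separately, since then $2/\alpha-\beta=0$ collapses Regime A; there the stationary point is $r(\eta_k)=\Theta(\eta_k k^{-\tau})$ and the condition reduces to $\mu\eta_k-a\eta_k^2=\omega_k k^{-1}$, whose linear term forces $\zeta=1$, hence $\zeta\nu=1+\tau$ and complexity $\epsilon_f^{-1/(1+\tau)}$.

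The hard part will be the bookkeeping around $\omega_k$ rather than the algebra. I must certify that $\omega_k$ lies in a fixed interval $(0,2)$ bounded away from $0$, so that $T=\Omega(1/\min_j\omega_j)$ is a genuine constant, giving $N=KT=\Theta(K)$ and hence $N=\mathcal{O}(\epsilon_f^{-1/(\zeta\nu)})$ once $\delta_K=\mathcal{O}(K^{-\zeta\nu})\le\epsilon_f$. Boundedness away from $0$ follows from the $\Theta(k^{-1})$ scaling just arranged; the upper bound $\omega_k<2$ I would secure by taking the implicit constant in $\eta_k=\Theta(k^{-\zeta})$ small enough, which is legitimate since that constant only rescales the leading coefficient in the identity for $\omega_k$. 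A secondary subtlety is making the dominant-balance approximation for $r(\eta_k)$ rigorous near the regime threshold, where the two monomials are comparable; there one checks that $r(\eta_k)$ is still $\Theta(k^{-\zeta\nu})$ with the common exponent, so the two sub-cases glue continuously. With these points settled, Theorem~\ref{lemma_man} delivers $\delta_k=\mathcal{O}(k^{-\zeta\nu})$ and the stated sample complexities at once.
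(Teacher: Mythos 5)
Your proposal is correct and follows essentially the same route as the paper's proof: it invokes Theorem~\ref{lemma_man}, identifies $r(\eta_k)$ by balancing the gradient-domination term against the variance term in the stationarity equation \eqref{eq:statioanry} (yielding the same two regimes split at $\tau=\gamma/(4-\alpha-\gamma)$), and fixes $\zeta$ by forcing condition \eqref{eq:condition1} to scale as exactly $\Theta(k^{-1})$, which reproduces the paper's step-size exponents and rates in all three cases, including the degenerate case $\gamma=2$. Your elimination of $\mu r^{2/\alpha}$ via the stationarity relation, which makes $\omega_k$ a sum of two manifestly positive terms, is a slightly tidier bookkeeping device than the paper's separate order/sign conditions \eqref{eq:app:main:1}--\eqref{eq:app:main:2} and test-point bound on $r(\eta)$, but the underlying argument is the same.
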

\vspace{-.2cm}
To verify the above result empirically, we simulated $\delta_t$ in \eqref{eq:main_fixed} throughout all iterations of Algorithms~\ref{alg:sgd} for different sets of parameters and presented the results in Figure \ref{fig:verify1} along with their corresponding convergence rates given in Corollary \ref{corr:main1}. 
As it is shown in these figure, the above convergence rates correctly capture the behaviour of the dynamics in \eqref{eq:main_fixed}. As an example, in Figure \ref{fig:verify1}(b), the red solid curve shows the rate of $\delta_k$, i.e., $\log(\delta_k)$ as a function of $\log(k)$ for $\gamma=1.1, \alpha=1.4$, and $\tau=0.9$. Based on Corollary \ref{corr:main1}, the convergence rate of Algorithm \ref{alg:sgd} for this setting is 
$\small{\mathcal{O}(\epsilon_f^{-0.98})}$
or equivalently $\log(\delta_k)=(-\frac{\alpha(\tau+1)}{4-\alpha})\log(k)\approx -1.02\log(k)$ which is shown by red dashed line. 
Next, we discuss how the results in Corollary \ref{corr:main1} generalizes the existing work in the literature. 
\vspace{-.2cm}
\paragraph{Comparison to related works.} Authors in \cite{khaled2020better} studied the convergence of \algname{SGD} for $2$-P{\L} objectives, under a stronger assumption than Assumption~\ref{ass:k_ES}. 
 More precisely, they assumed an estimator that satisfies Assumption \ref{ass:k_ES} with $\tau=0$ and $h(t)=t$ and obtained the convergence rate of $\mathcal{O}(\epsilon_f^{-1})$. 
 This is consistent with our rate presented in the Corollary \ref{corr:main1}. 
 It is worth noting that in this setting, $\mathcal{O}(\epsilon_f^{-1})$ is optimal \cite{khaled2020better}. 
 
The authors in \cite{yuan2021general} studied the performance of \algname{SGD} for $1$-P{\L} objectives. Assuming that the gradient estimator satisfies Assumption \ref{ass:k_ES} with $\tau=0$ and $h(t)=t$, they obtain $\mathcal{O}(\epsilon_f^{-3})$ sample complexity. This result can be recovered from Corollary~\ref{corr:main1} by setting $\tau=0, \gamma=1$, and $\alpha=1$. Note that in this case, the cost of each iteration is $b_k = 1$, which means that the iteration complexity coincides with the sample complexity. We note that our proof technique is different than in \cite{yuan2021general} and allows to consider more general assumptions. Finally, \cite{fontaine2021convergence} studied \algname{SGD} with $\al$-P{\L} objectives for $\alpha\in[1,2]$ under bounded variance Assumption~\ref{as:UBV} with $b_k=1$ and obtained similar convergence rate to ours. We recover their result as a special case by setting $A=0$ and $\tau=0$ in Corollary~\ref{corr:main1}, if we set $T=1$ we also recover the same (up to a constant) step-sizes $\eta_k = \Theta \rb{ k^{-\fr{2}{4-\al}} }$. However, we highlight that our proof technique is different from \cite{fontaine2021convergence}, and more generic since it holds for a general Assumption~\ref{ass:k_ES}.


\begin{figure}[t]
        \centering
        \begin{subfigure}{0.33\textwidth}
            \centering
            \includegraphics[width=\textwidth]{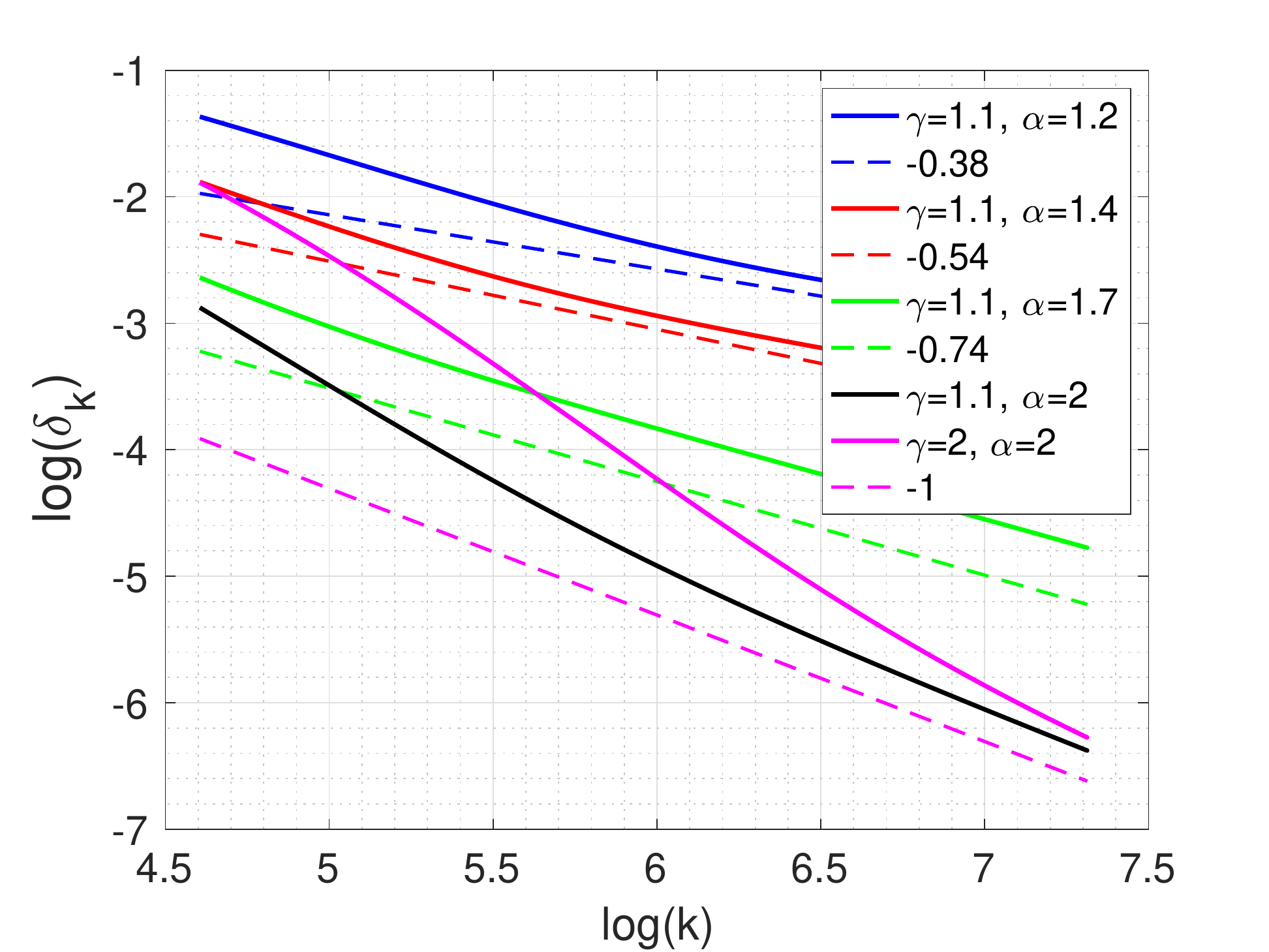}
            \caption{$\tau=0$.}
            \label{fig: tau0}
        \end{subfigure}
        \begin{subfigure}{0.33\textwidth}
            \centering
            \includegraphics[width=\textwidth]{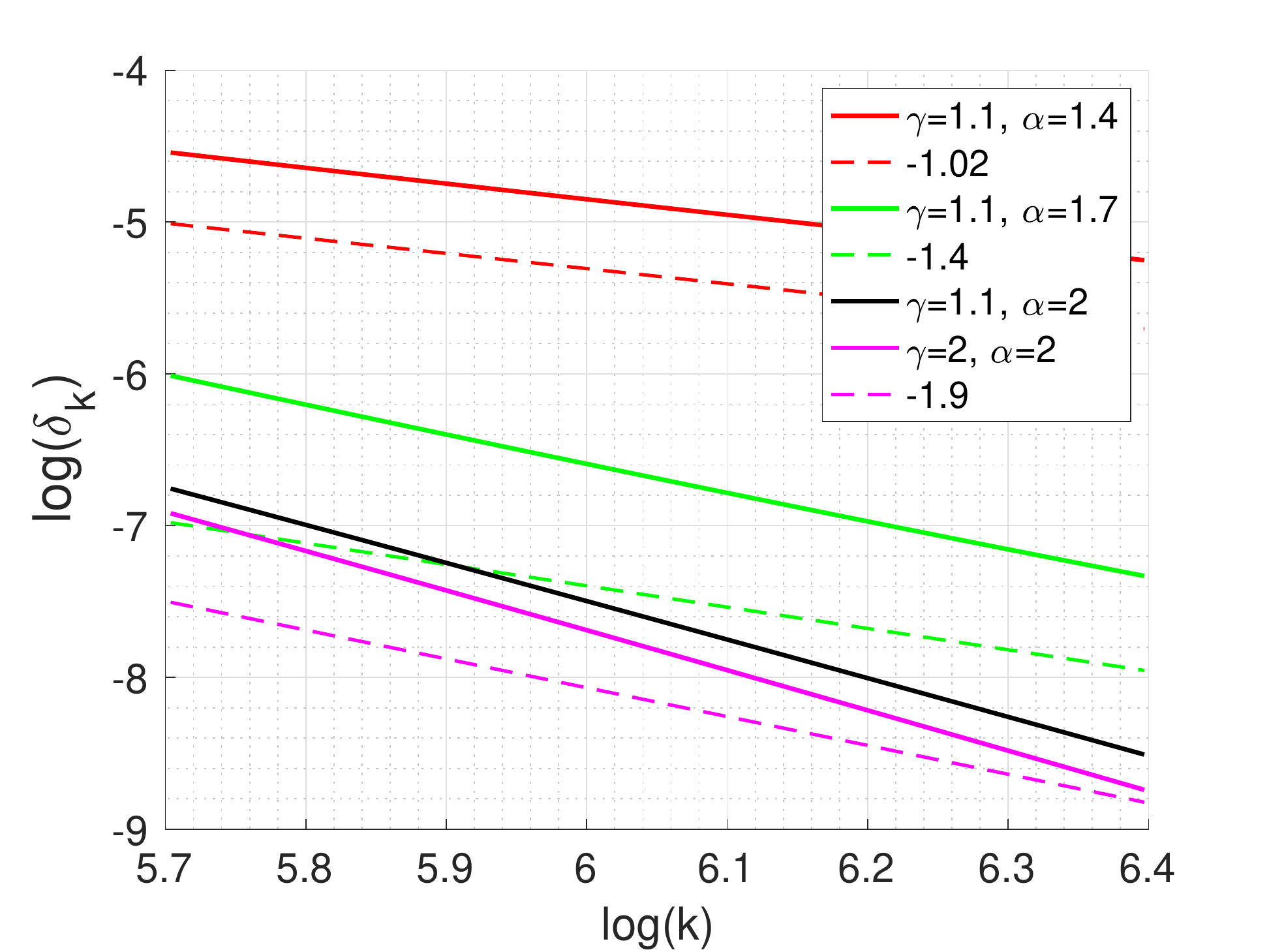}
            \caption{$\tau=0.9$.}
            \label{fig: tau9}
        \end{subfigure}%
        \begin{subfigure}{0.33\textwidth}
            \centering
            \includegraphics[width=\textwidth]{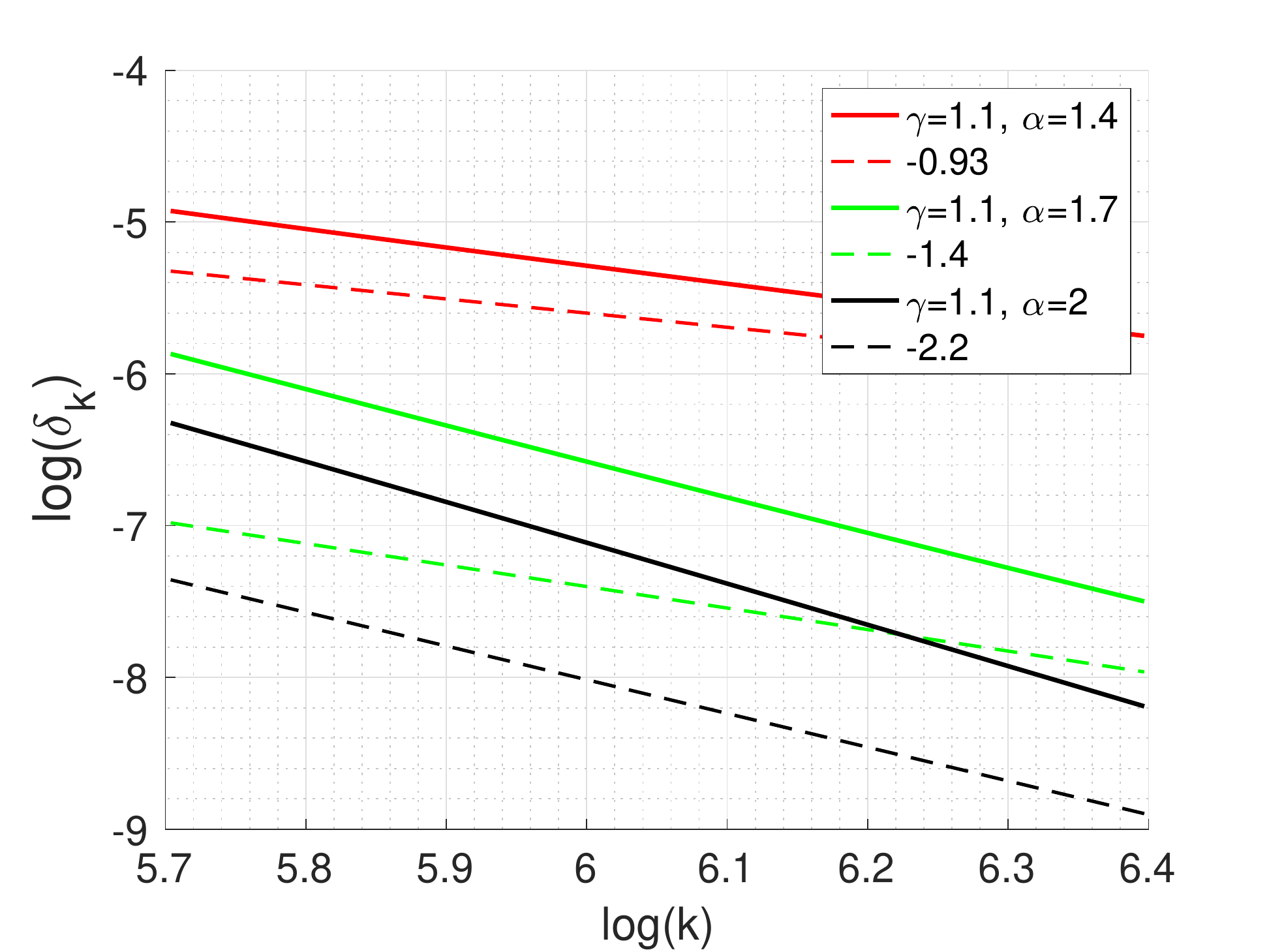}
            \caption{$\tau=2$.}
            \label{fig: tau2}
        \end{subfigure}
        \caption{Behavior of the dynamics in \eqref{eq:main_fixed} for $h(t)=t^\beta$, $\phi(t)=\sqrt{2\mu}\ t^{1/\alpha}$, $\tau\in\{0,0.9,2\}$, and different $\alpha,\beta$. Each solid line shows $\log(\delta_k)$ as a function of $\log(k)$, for a given set of parameters and each dashed line shows the corresponding theoretical convergence rate of $\delta_k$ presented in Corollary \ref{corr:main1}. The numbers assigned to dashed lines indicate the slope of those lines.  (a) and (b) verify the case corresponding to $\tau\leq\gamma/(4-\alpha-\gamma)$ and (c) verifies the case $\tau>\gamma/(4-\alpha-\gamma)$. Note that the distance between the dashed and solid lines is due to constant factors.}\label{fig:verify1}
    \end{figure}

\subsection{Sample complexity of SGD}\label{subsec:minibatch}
The result of Corollary~\ref{corr:main1} suggests that by increasing the cost of the gradient estimator $b_k$ over the iterations, one can achieve a better iteration complexity of Algortihm~\ref{alg:sgd}. In particular, it improves with $\tau$ until it reaches the minimum $N = \cO\rb{\epsilon_f^{-\frac{4-\alpha-\gamma}{\alpha}}}$ at $\tau = \frac{\gamma}{4-\alpha-\gamma}$ and does not change for larger values of $\tau$. However, we are merely interested in the iteration complexity in practice, since the computational cost at each iteration can be prohibitively large. A more adequate measure is the total computational cost (sample complexity) of the method. It is interesting whether increasing $b_k$ over the iterations may also result in a better sample complexity for finding an $\epsilon_f$-optimal solution, than for the constant choice, e.g., $b_k = 1$. The following lemma shows the contrary.
\begin{proposition}\label{lemma_mini-batchsize}
Let the assumptions of Corollary \ref{corr:main1} hold, $b_k = \Theta\rb{k^{\tau}}$, $T = \Theta\rb{1}$. Then the expected total computational cost (sample complexity) of Algorithm \ref{alg:sgd} is
	 \begin{equation}
	 \text{cost} := T \cdot \sum_{k=0}^{K-1} b_k = \begin{cases} \cO\rb{ \epsilon_f^{-\fr{4-\al}{\al} }} \qquad & \text{for } 0 \leq \tau \leq \frac{\gamma}{4-\alpha-\gamma}, \notag \\
	 \cO\rb{\epsilon_f^{-\frac{\rb{4-\alpha - \gamma} \rb{\tau+1} }{\alpha }} }  \qquad & \text{for } \tau > \frac{\gamma}{4-\alpha-\gamma} . 
	 \end{cases} 
	\end{equation}
\end{proposition}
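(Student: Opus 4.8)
The plan is to reduce the cost computation entirely to the iteration complexity already established in Corollary~\ref{corr:main1}, since the proposition merely re-weights the iteration count by the per-iteration cost $b_k$. First I would observe that because $T = \Theta(1)$, the number of outer restarts obeys $K = \Theta(N)$, where $N = K \cdot T$ is the iteration complexity supplied by Corollary~\ref{corr:main1}; in particular $K = \cO(N)$, which is all that is needed for an upper bound. Thus it suffices to express $\sum_{k=0}^{K-1} b_k$ in terms of $K$ and then substitute the two regimes of $N$. Note that the threshold $\tau = \frac{\gamma}{4-\alpha-\gamma}$ and the resulting two-case structure come precisely from the $\gamma < 2$ part of Corollary~\ref{corr:main1}.

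Next I would evaluate the partial sum. With $b_k = \Theta(k^\tau)$ and $\tau \geq 0$, the standard integral comparison gives $\sum_{k=0}^{K-1} k^\tau = \Theta(K^{\tau+1})$, so that $\text{cost} = T \cdot \sum_{k=0}^{K-1} b_k = \Theta(K^{\tau+1})$, using $T = \Theta(1)$ once more. This is the only analytic estimate required.

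Finally I would plug in the two cases of Corollary~\ref{corr:main1}. In the regime $\tau \leq \frac{\gamma}{4-\alpha-\gamma}$ we have $K = \cO(\epsilon_f^{-\frac{4-\alpha}{\alpha(\tau+1)}})$, and since $\epsilon_f$ is small, raising this $\cO$ bound to the positive power $\tau+1$ cancels the $(\tau+1)$ in the denominator and yields $\text{cost} = \cO(\epsilon_f^{-\frac{4-\alpha}{\alpha}})$, independent of $\tau$. In the regime $\tau > \frac{\gamma}{4-\alpha-\gamma}$ we have $K = \cO(\epsilon_f^{-\frac{4-\alpha-\gamma}{\alpha}})$, and raising to the power $\tau+1$ gives $\text{cost} = \cO(\epsilon_f^{-\frac{(4-\alpha-\gamma)(\tau+1)}{\alpha}})$, which is increasing in $\tau$.

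The computation is elementary, so there is no genuine obstacle; the only thing to get right is the exact cancellation of the $(\tau+1)$ factor in the first regime. Conceptually, this cancellation is the whole point of the proposition: although increasing $\tau$ improves the \emph{iteration} complexity (Corollary~\ref{corr:main1}), it leaves the \emph{sample} complexity unchanged up to the threshold $\tau = \frac{\gamma}{4-\alpha-\gamma}$ and strictly worsens it beyond, so the constant cost $b_k = \Theta(1)$ is already sample-optimal within this family.
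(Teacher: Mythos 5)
Your proposal is correct and follows essentially the same route as the paper's own proof: both reduce the computation to the iteration complexity $N$ from Corollary~\ref{corr:main1}, estimate $T\cdot\sum_{k=0}^{K-1} k^{\tau} = \cO(N^{\tau+1})$, and substitute the two regimes of $N$, with the $(\tau+1)$ exponent cancelling in the first regime exactly as you describe. No gaps.
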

The above result implies that increasing the cost of the gradient estimator with iterations does not improve the total sample complexity of Algorithm~\ref{alg:sgd}. Therefore, one can simply select $b_k = 1$ ($\tau = 0$) and obtain $\cO\rb{ \epsilon_f^{-\fr{4-\al}{\al} }}$ sample complexity.

\vspace{-.2cm}
\subsection{Tightness of rates in Corollary \ref{corr:main1}}
\vspace{-.2cm}
In this section, we show that when $\tau=0$, the convergence rates presented in Corollary \ref{corr:main1} are tight for the dynamic \eqref{eq:main_fixed} describing the progress of \algname{SGD}. More precisely, if there exists a function $f$ and a gradient estimator satisfying the assumptions in Corollary \ref{corr:main1} such that its corresponding recursive inequality \eqref{eq:main_fixed} is an equality, then its convergence rate, presented in Corollary \ref{corr:main1} cannot be improved by any choices of stepsizes $\cb{\eta_k}_{k\geq 0}$.  
Next proposition summarizes our results about the tightness of our convergence rates in Corollary \ref{corr:main1}.


\begin{proposition}\label{pro:lower}
Consider the following recursion 
\begin{align*}
\delta_{k+1}= \delta_k+a\eta_k^2\cdot h\big(\delta_k\big)-\frac{\eta_k}{2}\phi^2(\delta_k)+ \fr{d\eta_k^2}{b_k},\quad \text{for all } k\geq 0,
\end{align*}
where $a \geq 0$, $d>0$, $h(t)=t^\beta$ with $\beta\in(0,1]$, $\phi(t)=\sqrt{2\mu} t^{1/\alpha}$ with $\alpha\in[1,2]$, and $b_k=\Theta(1)$. 
Then $\delta_k = {\Omega}(k^{-\frac{\alpha}{4-\alpha}})$ for any sequence of $\cb{\eta_k}_{k\geq 0}$. Moreover, this rate is achieved by the choice $\eta_k=\Theta(k^{-\frac{1}{2-\alpha/2}})$.
\end{proposition}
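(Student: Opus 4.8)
The plan is to isolate the slowest possible decay by optimizing over the free stepsize at each step, thereby decoupling the lower bound from the particular sequence $\{\eta_k\}$, and then to analyze the resulting autonomous recursion. Since $a \geq 0$ and $h(\delta_k) = \delta_k^\beta \geq 0$, the term $a\eta_k^2 \delta_k^\beta$ is nonnegative and may be dropped to obtain a valid lower bound on $\delta_{k+1}$; using $b_k = \Theta(1)$ there is a constant $c > 0$ with $d/b_k \geq c$ for all $k$, and substituting $\phi^2(\delta_k) = 2\mu\, \delta_k^{2/\alpha}$ yields
\[
  \delta_{k+1} \geq \delta_k - \mu \eta_k \delta_k^{2/\alpha} + c\eta_k^2 .
\]
The key observation is that the right-hand side is a quadratic in $\eta_k$ whose minimum over all $\eta_k\in\mathbb{R}$ is attained at $\eta_k^\star = \mu \delta_k^{2/\alpha}/(2c)$ and equals $\delta_k - \tfrac{\mu^2}{4c}\delta_k^{4/\alpha}$. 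Since this is the pointwise minimum, it holds for the actual $\eta_k$ as well, so for \emph{every} stepsize sequence
\[
  \delta_{k+1} \geq \delta_k - C\delta_k^{p}, \qquad C := \frac{\mu^2}{4c}, \quad p := \frac{4}{\alpha} > 1 .
\]

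Next I would convert this autonomous recursion into a polynomial decay rate. Note $p - 1 = (4-\alpha)/\alpha$, so the target is $k^{-1/(p-1)} = k^{-\alpha/(4-\alpha)}$. The standard device is the reciprocal substitution $u_k := \delta_k^{-(p-1)}$: writing $\delta_{k+1} \geq \delta_k(1 - C\delta_k^{p-1})$ and using the elementary inequality $(1-x)^{-(p-1)} \leq 1 + K x$ valid for $x\in[0,x_0]$, one gets $u_{k+1} \leq u_k + KC$ whenever $\delta_k^{p-1} \leq x_0$, because $u_k \delta_k^{p-1} = 1$. Telescoping gives $u_k \leq u_0 + KCk = \mathcal{O}(k)$, hence $\delta_k = u_k^{-1/(p-1)} = \Omega(k^{-\alpha/(4-\alpha)})$. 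The care needed is to handle steps where $\delta_k$ is not small: either $\delta_k$ stays bounded below by a constant (giving an even stronger $\Omega(1)$ bound), or one propagates the floor $\delta_k \geq c'(k+k_1)^{-1/(p-1)}$ by induction using a monotone comparison with the map $F(x)=x-Cx^p$, which is increasing on $x \leq (Cp)^{-1/(p-1)}$. Because the recursion only lower-bounds $\delta_{k+1}$, any stepsize that pushes $\delta_{k+1}$ above the floor only helps, so the binding case is exactly the one-step-optimal trajectory analyzed above.

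Finally, for the claim that the rate is achieved, I would invoke Corollary \ref{corr:main1} with $\tau=0$: the equality recursion of the proposition is a special case of the inequality \eqref{eq:main_fixed}, so the upper bound there applies and gives $\delta_k = \mathcal{O}(k^{-\alpha/(4-\alpha)})$ precisely for $\eta_k = \Theta(k^{-1/(2-\alpha/2)})$. Since $2-\alpha/2=(4-\alpha)/2$, this is $\Theta(k^{-2/(4-\alpha)})$, which is exactly the order of the one-step-optimal $\eta_k^\star = \mu\delta_k^{2/\alpha}/(2c)$ evaluated along $\delta_k = \Theta(k^{-\alpha/(4-\alpha)})$ — a reassuring internal consistency. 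Combined with the lower bound this yields $\delta_k = \Theta(k^{-\alpha/(4-\alpha)})$ for that stepsize. I expect the main obstacle to be the rigor of the recursion analysis in the second step: controlling the regime where $\delta_k$ is not yet small and confirming that the nonnegativity and monotonicity bookkeeping does not let an adversarial stepsize sequence escape the floor. The reduction in the first step is the clean conceptual core, and the achievability is essentially a citation to Corollary \ref{corr:main1}.
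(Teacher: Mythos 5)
Your proposal is correct in substance and takes a genuinely more direct route than the paper's. The paper first restricts to the tail where $\delta_k<1$ (so that $\delta_k^{\beta}\geq\delta_k$), then sandwiches the dynamics between auxiliary comparison recursions --- $r_{k+1}=r_k+a\eta_k^2 r_k-\eta_k\mu r_k^{2/\alpha}+d\eta_k^2$ and the linearized $r_{k+1,\varepsilon}=r_k\big(1-a'\eta_k^{1+(2-\alpha-\varepsilon)/2}\big)+d'\eta_k^2$ --- and invokes a separate technical result (Lemma~\ref{lem_ge}, a generalization of Theorem~3.2 of \cite{gower2019sgd}) to lower-bound the linearized dynamic; the pointwise-optimal-stepsize idea appears there only as a sub-step (equation~\eqref{eq:app:unreal}), applied to the transformed recursion, and the final rate is obtained after letting $\varepsilon\to 0$ and uses the relation $\delta_k=\mathcal{O}(\eta_k^{\alpha/2})$ imported from Theorem~\ref{lemma_man}, which implicitly ties the argument to the stepsize families analyzed there. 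You instead make the pointwise minimization over $\eta_k$ the first and central step, applied directly to the exact recursion after discarding the nonnegative term $a\eta_k^2\delta_k^{\beta}$: since the right-hand side is a quadratic in $\eta_k$, every stepsize sequence obeys $\delta_{k+1}\geq\delta_k-C\delta_k^{4/\alpha}$, and the reciprocal substitution $u_k=\delta_k^{-(p-1)}$ with $p=4/\alpha$ finishes the job. This buys three things: no $\varepsilon$-slack and no limiting argument (the exponent $\alpha/(4-\alpha)$ comes out exactly); no reliance on the upper-bound theory to relate $\delta_k$ to $\eta_k$; and a bound that quantifies over arbitrary stepsize sequences rather than over those produced by the theory. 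The achievability half is handled identically in both proofs, by citing Corollary~\ref{corr:main1} with $\tau=0$.

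One caveat you should state explicitly, though it afflicts the paper's proof equally (indeed it is a defect of the proposition's literal quantification): once $\delta_k$ exceeds the threshold below which $F(x)=x-Cx^{p}$ is monotone, the autonomous lower bound is vacuous, and in that regime an adversarial stepsize can actually drive $\delta_{k+1}$ to exactly zero --- e.g.\ with $a=0$ and $\delta_k\geq\big(4d/(b_k\mu^2)\big)^{\alpha/(4-\alpha)}$ the quadratic $\delta_k-\eta\mu\delta_k^{2/\alpha}+d\eta^2/b_k$ has a real root; choosing $\eta_k$ at that root and $\eta_j=0$ thereafter freezes $\delta_j$ at zero, violating any polynomial lower bound. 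A huge $\eta_k$ can also bounce a small $\delta_k$ above the threshold, so no floor-propagation argument can close this on its own: the statement must be read as restricted to trajectories that remain in the small-$\delta$ regime (which is the regime the paper implicitly works in, via its assumption that $\delta_k\to 0$ and $\delta_k<1$ eventually). Within that regime your induction with the monotone map $F$ is exactly right.
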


\section{Faster Rates with Variance Reduction}\label{sec:variace_reduction}

\begin{algorithm}[h]
	\caption{\algname{PAGER} (\algname{PAGE} with restarts)}\label{alg:PAGE_w_restarts}
	\begin{algorithmic}[1]
		\STATE Initialization: $\bar{x}_0,\bar{g}_0, K, \cb{ \Lambda_k = \rb{ \eta_k, T_k, p_k, b_k, b_k^{\prime}} : k=0,...,K-1 }$
		\FOR{$k=0, \ldots, K -  1$}
		\STATE $\rb{x_0, g_0} \leftarrow \rb{\bar{x}_k, \bar{g}_k}$
		\STATE $\rb{\eta, p, b, b^{\prime}} \leftarrow \rb{\eta_k, p_k, b_k, b_k^{\prime}}$
		\FOR{$t=0, \ldots, T_k -  1$}
    		\STATE $\xtpo = \xt - \eta g_t$
    		\STATE Sample $\chi \sim \text{Bernoulli}(p)$ 
    		\IF{$\chi=1$}
    		    \STATE $g_{t+1} = \frac{1}{b}\sum_{i=1}^{b} \nabla f_{\xi_{t+1}^i}(x_{t+1})$ 
    		\ELSE
    			\STATE $g_{t+1} = g_{t} +  \frac{1}{b^{\prime}}\sum_{i=1}^{b^{\prime}} \nabla f_{\xi_{t+1}^i}(x_{t+1})  - \frac{1}{b^{\prime}}\sum_{i=1}^{b^{\prime}} \nabla f_{\xi_{t+1}^i}(x_{t}) $ 
        	\ENDIF
		\ENDFOR
		\STATE $\rb{\bar{x}_{k+1}, \bar{g}_{k+1}} \leftarrow \rb{x_{t+1}, g_{t+1}}$
		\ENDFOR
		\STATE \textbf{Return: $\bar{x}_K$} 
	\end{algorithmic}
\end{algorithm}
To simplify the exposition of the results in this section, let us assume that $g_k(x_{t}, \xi_{t})$ is constructed explicitly via mini-batching 
$
g_k(x_t, \xi_t):= \frac{1}{b_k}\sum_{i=1}^{b_k} \nabla f_{\xi_t^i}(x_t) ,
$
where $\xi_t \eqdef \rb{\xi_t^1,\ldots, \xi_t^{b_k}}$ is a random vector of independent entries, $\xi_t$ are independent for all iterations, $\tiny{\{\nabla f_{\xi_t^i}(x_t) \}}_{i= 1}^{b_k}$ are queries provided by an oracle such that $\mathbb{E}[\nabla f_{\xi_t^i}(x_t) ]=\nabla f(x_t)$ and $\mathbb{E}[||\nabla f_{\xi_t^i}(x_t) -\nabla f(x_t)||^2]\leq \sigma^2$ for all $t\geq 0$. The variance of this estimator diminishes linearly in the size of the mini-batch $b_k$, i.e., $g_k(x_{t}, \xi_{t})$ satisfies
\begin{assumption}[$k$-BV, bounded variance]\label{as:UBV}
	Let Assumption~\ref{ass:k_ES} hold with $A = 0$, $B = 1$ and $C = \sigma^2$, i.e., $\Exp{\sqnorm{g_k(x, \xi) - \nabla f(x)} } \leq \fr{\sigma^2}{b_k} $.
\end{assumption}
Additionally, we assume that we have access to a gradient estimator {$g_k^{\prime}(x,\xi)$}, which satisfies the following
\begin{assumption}[Average $\cL$-smoothness (of order $k$)]\label{as:avg_smoothness_page}
    Let $g_k^{\prime}(x,\xi)\eqdef 	\frac{1}{b_k^{\prime}}\sum_{i=1}^{b_k^{\prime}} \nabla f_{\xi^i}(x)$ and $g_k^{\prime}(y,\xi) \eqdef \frac{1}{b_k^{\prime}}\sum_{i=1}^{b_k^{\prime}} \nabla f_{\xi^i}(y)$ be unbiased mini-batch estimators of the gradient of $f(\cdot)$ at points $x$ and $y$, respectively for shared stochasticity $\xi^i\sim \cD$ for each $i = 1,\dots,b_k^{\prime}$ and $\xi = (\xi^1,\ldots,\xi^{b_k})$. Define $\widetilde{\Delta}(x, y) :=  g_k^{\prime}(x,\xi) -  g_k^{\prime}(y,\xi)$. The average $\cL$-smoothness (of order $k$) holds if there exists $\cL \ge 0$ such that $\Exp{\sqnorm{\widetilde{\Delta}(x,y) - \Delta(x,y) } } \le \frac{\cL^2}{b_k^{\prime}} \sqnorm{ x - y } \quad  \text{for all } x, y \in \R^d,$
	where $\Delta(x, y) := \nabla f(x) - \nabla f(y)$.
\end{assumption}

\begin{remark}
The Assumption~\ref{as:avg_smoothness_page} holds in several standard settings. For instance, if each $\nabla f_{\xi^i}(x)$ is Lipschitz with constant $\bar{L}$ (almost surely or on average), then Assumption~\ref{as:avg_smoothness_page} holds with $\cL \leq \bar{L}$. Another example is when $f(\cdot)$ is of the form \eqref{eq:problem_finite_sum} and $b_k^{\prime} = n$, then $\cL = 0$. 
\end{remark}
\subsection{{PAGER} -- a new variance reduction for $\alpha$-P{\L} objectives}
We remark from the analysis of Algorithm~\ref{alg:sgd} in Section~\ref{sec:main} that merely playing with choice of $\eta_k$ and $b_k$ (chosen as polynomial functions of $k$) is not sufficient to improve the convergence, hence, we need to construct more sophisticated gradient estimator and reduce the variance using control variate. Now, we highlight the main algorithmic ingredients of our construction. First, let us describe the variance reduced estimator named \algname{PAGE}, which will be the main building block for our Algorithm~\ref{alg:PAGE_w_restarts}. \algname{PAGE} was introduced and analyzed in \citep{PAGE} and is known to be optimal for finding a first order stationary point. Moreover, it is easy to implement and designed via a small modification to mini-batch \algname{SGD}
$$g_{t+1} = \begin{cases}
				\frac{1}{b}\sum_{i=1}^{b} \nabla f_{\xi_{t+1}^i}(x_{t+1}) ,  & \text{w.p.} \quad\  p,\\
				g_{t} +  \frac{1}{b^{\prime}}\sum_{i=1}^{b^{\prime}} \rb{ \nabla f_{\xi_{t+1}^i}(x_{t+1}) - \nabla f_{\xi_{t+1}^i}(x_{t}) } , &\text{w.p.}\  1-p, 
			\end{cases}
$$
where $p$ is a small probability and mini-batch sizes satisfy $b > b^{\prime}$. 

However, while the method looks simple, the extension of its analysis to $\alpha$-P{\L} functions faces several difficulties.  \footnote{We refer the reader to Appendix~\ref{sec:variace_reduction_appendix}, where we explain the challenges in the analysis of variance reduction under $\al$-P{\L} condition and show how we overcome these difficulties using the restart strategy.}  Therefore, we introduce a new method, which we call \algname{PAGER} (Algorithm~\ref{alg:PAGE_w_restarts}) -- a \textit{Probabilistic Average Gradient Estimator with parameter Restart}. It takes as input the sequence of parameters $\cb{ \Lambda_k \eqdef \rb{ \eta_k, T_k, p_k, b_k, b_k^{\prime}} : k=0,...,K-1 }$, where $T_k$ is the length of stage $k$, $\eta_k, p_k, b_k, b_k^{\prime}$ step-size, probability, and batch-sizes at stage $k$. \algname{PAGER} updates this sequence of parameters in the outer loop $k = 0,\ldots, K-1$ and applies \algname{PAGE} estimator with a fixed set of parameters in the inner loop $t = 0,\ldots, T_k-1$. We will select $\cb{\Lambda_k}_{k\geq 0 }$ depending on the P{\L} power $\al$ to capture the dependence on the geometry of the problem and establish fast rates for each $\al$ in settings \eqref{eq:problem_online} and \eqref{eq:problem_finite_sum}. 

\subsection{Online case}
We present convergence guarantees for Algorithm~\ref{alg:PAGE_w_restarts} in the setting \eqref{eq:problem_online} and defer its formal proof to Appendix~\ref{sec:variace_reduction_appendix}.
\begin{theorem}\label{thm:PAGE_online_w_restarts1} 
	Let $f(\cdot)$ have the form \eqref{eq:problem_online} and satisfy Assumptions~\ref{ass:l-smooth}, \ref{as:lojasiewicz} (with $\al \in [1, 2)$), \ref{as:UBV} and \ref{as:avg_smoothness_page}, let the sequences\footnote{For brevity, in Theorem~\ref{thm:PAGE_online_w_restarts1} we define the input sequences up to constants hidden in $\Theta\rb{\cdot}$ notation. In fact, our analysis allows to specify these constants and we present detailed derivations in Appendix~\ref{sec:variace_reduction_appendix}. }  in Algorithm~\ref{alg:PAGE_w_restarts} be chosen as $b_k^{\prime} = \Theta\big( 2^{\fr{(2-\al)k}{\al}} \big)$, $p_k = \Theta\big( 2^{\fr{-(2-\al)k}{\al}} \big)$, $b_k = \Theta\big( 2^{\fr{2 k}{\al}} \big)$, $T_k = \Theta\big( 2^{\fr{(2-\al)k}{\al}} \big)$, $	\eta_k = \Theta\big( 1 \big) $.
    Then, for any $\epsilon_f > 0$ Algorithm \ref{alg:PAGE_w_restarts} returns a point $x$ with $\Exp{f(x) - f^{\star}} \leq \epsilon_f$ after 
$N := \sum_{k=0}^{K-1} T_k =  \cO  \big(\kappa \epsilon_f^{-\fr{2-\al}{\al} }\big) $ iterations, where $\kappa = \nfr{\cL}{\mu}$. The expected total computational cost (sample complexity) is $\cO\rb{ \rb{ \fr{\sigma^2 }{\mu} + \kappa^2 }  \epsilon_f^{-\fr{2}{\al} }} $.

\end{theorem}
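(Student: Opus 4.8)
The plan is to analyze \algname{PAGER} on two nested levels: an \emph{inner} analysis of a single stage $k$ of \algname{PAGE} with frozen parameters $(\eta_k,p_k,b_k,b_k^{\prime})$, and an \emph{outer} restart analysis that chains the stages. For the inner level I would first record the two standard ingredients. Smoothness (Assumption~\ref{ass:l-smooth}) together with unbiasedness of $g_t$ gives, for $\eta_k\le 1/L$,
$$\Exp{f(x_{t+1})-f^{\star}} \le \Exp{f(x_t)-f^{\star}} - \fr{\eta_k}{2}\Exp{\sqnorm{\nabla f(x_t)}} + \fr{\eta_k}{2}\Exp{V_t},$$
where $V_t:=\Exp{\sqnorm{g_t-\nabla f(x_t)}}$. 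The variance recursion, using Assumption~\ref{as:UBV} on the reset branch and Assumption~\ref{as:avg_smoothness_page} on the correction branch (with $\sqnorm{x_{t+1}-x_t}=\eta_k^2\sqnorm{g_t}$), gives
$$\Exp{V_{t+1}} \le (1-p_k)\Exp{V_t} + (1-p_k)\fr{\cL^2\eta_k^2}{b_k^{\prime}}\Exp{\sqnorm{g_t}} + p_k\fr{\sigma^2}{b_k}.$$
Combining these through a Lyapunov function $\Phi_t:=\Exp{f(x_t)-f^{\star}}+\fr{\eta_k}{2p_k}\Exp{V_t}$ and imposing $\fr{\cL^2\eta_k^2}{p_k b_k^{\prime}}=\cO(1)$ (which pins down $b_k^{\prime}=\Theta(1/p_k)$ so that the $\sqnorm{\nabla f}$ coefficient stays negative and the variance does not accumulate) yields a descent of the form $\Phi_{t+1}\le \Phi_t - \Theta(\eta_k)\Exp{\sqnorm{\nabla f(x_t)}} + \Theta(\eta_k\sigma^2/b_k)$.

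The crux is turning this gradient-norm descent into a geometric contraction of the \emph{function gap} across stages despite $\al<2$. Here I exploit the restart. Let $\delta_k:=\Exp{f(\bar x_k)-f^{\star}}$ and run stage $k$ until the gap reaches $\delta_{k+1}:=\delta_k/2$; throughout the stage the gap stays in $[\delta_{k+1},\delta_k]$. On this range Assumption~\ref{as:lojasiewicz} reads $\sqnorm{\nabla f(x)}\ge 2\mu(f(x)-f^{\star})^{2/\al}=2\mu(f(x)-f^{\star})^{(2-\al)/\al}(f(x)-f^{\star})\ge 2\mu\,\delta_{k+1}^{(2-\al)/\al}(f(x)-f^{\star})$, i.e.\ an \emph{effective} $2$-P{\L} inequality with constant $\mu_k:=\mu\,\delta_{k+1}^{(2-\al)/\al}=\Theta(\mu\,\delta_k^{(2-\al)/\al})$. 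Feeding this into the $\Phi_t$-descent turns it into the linear recursion
$$\Phi_{t+1}\le \rb{1-\Theta(\eta_k\mu_k)}\Phi_t + \Theta\rb{\fr{\eta_k\sigma^2}{b_k}},$$
whose contraction factor is $1-\Theta(\mu_k/L)$ and whose fixed point is $\Theta\rb{\sigma^2/(\mu_k b_k)}$. Choosing $T_k=\Theta(L/\mu_k)=\Theta(\kappa\,\delta_k^{-(2-\al)/\al})$ drives the transient below $\delta_k/4$, while $b_k=\Theta\rb{\fr{\sigma^2}{\mu}\delta_k^{-2/\al}}$ forces the fixed point below $\delta_k/4$; together these yield the per-stage contraction $\delta_{k+1}\le \tfrac12\delta_k$. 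Since $\delta_k=\Theta(2^{-k})$, the stated schedule $T_k=\Theta(2^{(2-\al)k/\al})$, $b_k=\Theta(2^{2k/\al})$, $p_k=\Theta(2^{-(2-\al)k/\al})$, $b_k^{\prime}=\Theta(2^{(2-\al)k/\al})$ is precisely this tuning (with $\kappa$, $\sigma^2/\mu$ absorbed into the $\Theta$'s).

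Chaining the contractions gives $\delta_K\le 2^{-K}\delta_0$, so $K=\cO(\log(1/\epsilon_f))$ stages suffice for $\delta_K\le\epsilon_f$. The iteration count $N=\sum_{k=0}^{K-1}T_k$ is a geometric sum dominated by its last term $\Theta(\kappa\,2^{(2-\al)K/\al})$, and $2^{-K}\le\epsilon_f$ gives $N=\cO(\kappa\,\epsilon_f^{-(2-\al)/\al})$. For the sample complexity I count, per stage, one full-batch reset of cost $b_k$ (since $p_k$ is tuned so that $T_k p_k=\Theta(1)$, i.e.\ one reset per stage in expectation) plus $\Theta(T_k)$ correction steps of cost $\Theta(b_k^{\prime})$ each, for a stage cost $\Theta(b_k)+\Theta(T_k b_k^{\prime})=\Theta\rb{\tfrac{\sigma^2}{\mu}2^{2k/\al}}+\Theta\rb{\kappa^2 2^{2(2-\al)k/\al}}$. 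Both are geometric; summing and using $2^K=\Theta(\epsilon_f^{-1})$ with $(2-\al)/\al\le 1$ (valid for $\al\ge 1$) yields the total $\cO\rb{\rb{\tfrac{\sigma^2}{\mu}+\kappa^2}\epsilon_f^{-2/\al}}$, where the $\sigma^2/\mu$ comes from the reset batch and $\kappa^2$ from the product of stage length and correction batch.

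I expect the main obstacle to be the inner per-stage analysis with the \emph{degrading} effective modulus $\mu_k$. Two points need care. First, the effective $2$-P{\L} reduction is only valid while the gap stays above $\delta_{k+1}$, so one must run a stopping-time argument showing the Lyapunov descent keeps the iterates in the regime $[\delta_{k+1},\delta_k]$ for all $t\le T_k$ (the noise floor must lie below $\delta_{k+1}$ \emph{throughout}, not just in the limit), which is exactly what pins down $b_k$ rather than leaving it free. Second, \algname{PAGER} carries the estimator $\bar g_{k+1}=g_{T_k}$ across the restart, so the residual variance $V_{T_k}$ of one stage becomes the initial variance of the next under a \emph{different} Lyapunov weight $\eta_{k+1}/(2p_{k+1})$; one must verify that $V_{T_k}$, which is at the previous floor $\Theta(\sigma^2/b_k)$, remains admissible for the new stage. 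Tracking the constants through these couplings so that they collapse into the clean prefactor $\sigma^2/\mu+\kappa^2$ is the most delicate part of the argument.
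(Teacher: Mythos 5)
Your proposal is correct in substance and arrives at the paper's exact parameter schedule and complexity accounting, but your inner-stage analysis takes a genuinely different route from the paper's. The shared architecture: your Lyapunov function $\Phi_t$ with weight $\Theta(\eta_k/p_k)$ is, under your tuning $\cL^2\eta_k^2/(p_k b_k^{\prime})=\cO(1)$, the same as the paper's $\Psi_t=\delta_t+\lambda_k G_t$ with $\lambda_k = b_k^{\prime}/(4\eta_k(1-p_k)\cL^2)$; the halving restarts, the handling of the estimator carried across stages (the Lyapunov weight grows by $2^{(2-\al)/\al}$ per stage, which is why the inner loop must overshoot the factor-two target slightly), and the cost bookkeeping ($T_k p_k = \Theta(1)$ resets of cost $b_k$ giving the $\sigma^2/\mu$ term, $T_k b_k^{\prime}$ correction samples giving $\kappa^2$, both geometric in $k$) are identical. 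The difference is how the per-stage recursion is solved. The paper never linearizes the P{\L} inequality: it keeps the nonlinear term $-\eta\mu\Psi_t^{2/\al}$, controls the cross term $\rb{\Psi_t-\lambda G_t}^{2/\al}$ via $(1-x)^a\geq 1-ax$ (Lemma~\ref{le:useful_fact_1}), drops the resulting $G_t$-bracket by requiring $p_k \gtrsim \eta\mu\bar{\Psi}_k^{(2-\al)/\al}$ (order-wise the same condition as your $p_k\gtrsim\eta\mu_k$), and bounds the stage length by the auxiliary result on the recursion $\Psi_{t+1}\leq\Psi_t-\fr{\eta\mu}{2}\Psi_t^{2/\al}$ (Lemma~\ref{lemma:app:3}). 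You instead localize to the band $[\delta_{k+1},\delta_k]$ and replace $\al$-P{\L} by an effective $2$-P{\L} with stage-dependent modulus $\mu_k=\mu\,\delta_k^{(2-\al)/\al}$, turning the stage into a standard geometric contraction. Both yield $T_k=\Theta\big(\kappa\,\delta_k^{-(2-\al)/\al}\big)$; yours is more elementary per stage, the paper's avoids localization altogether at the price of the extra recursion lemma.

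One correction to what you flag as the main obstacle: a stopping-time argument keeping the \emph{random} iterates in the band $[\delta_{k+1},\delta_k]$ is neither available nor needed. Since the recursion is on expectations, apply Jensen first ($\Exp{(f(x_t)-f^{\star})^{2/\al}}\geq\delta_t^{2/\al}$ by convexity of $t\mapsto t^{2/\al}$); the band restriction $\delta_t\geq\delta_{k+1}$ is then a condition on the \emph{deterministic} sequence $\delta_t$ (resp.\ $\Phi_t$), and the stage ends at the first index where it fails — which is exactly how the paper phrases its induction (``unless we reached the desired accuracy within the inner loop''). Two minor slips, harmless at the level of your $\Theta$'s: \algname{PAGE}'s estimator is conditionally biased, but your descent inequality requires no unbiasedness (it is the paper's Lemma~\ref{le:aux_smooth_lemma}, valid for an arbitrary vector $g_t$); and the weight $\eta_k/(2p_k)$ is a constant factor too small for the variance part of $\Phi_t$ to contract — any $w\gtrsim\eta_k/p_k$ repairs this.
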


\paragraph{Improvement over \algname{SGD}.} Theorem~\ref{thm:PAGE_online_w_restarts1} implies that \algname{PAGER} improves the sample complexity of \algname{SGD} from $\cO\big( \epsilon_f^{-\fr{4-\al}{\al} }\big)  $ to $\cO\big( \epsilon_f^{-\fr{2}{\al} }\big)  $ under $\al$-P{\L} condition for the whole spectrum of parameters $\al \in [1, 2)$. In the case $\al = 1$, which holds in many interesting applications (see Appendix~\ref{sec:examples_appendix} for examples), this leads to $\cO\big( \epsilon_f^{-2 }\big)$ sample complexity compared to the best known $\cO\big( \epsilon_f^{-3 }\big)$ for \algname{SGD}.
 \paragraph{Relation to convex optimization and last iterate convergence.} As a consequence of our analysis we obtain \textit{the optimal sample complexity for convex stochastic optimization} under the additional assumption that the iterates of the method remain bounded, i.e., $ \sqnorm{\xt - \xstar} \leq D $ for all $t\geq 0$, where $\xstar \in \argmin_x f(x) $.\footnote{Note that this assumption is mild since it holds for the iterates of \algname{PAGER}, for example, if we additionally assume that $f(\cdot)$ is coercive, i.e., $f(x) \rightarrow \infty$ for $x\rightarrow \infty$.} For $1$-P{\L} objectives, \algname{PAGER} has $\cO\rb{ \epsilon_f^{-2} }$ sample complexity.
 Since the iterates of the algorithm are bounded, convexity $\langle \nabla f(x), x - \xstar\rangle \geq f(x) - f(\xstar)$ implies $1$-P{\L} with $\mu = \fr{1}{2 D} $. This observation implies convergence of \algname{PAGER} for convex objectives with $\cO\rb{ \epsilon_f^{-2} }$ sample complexity, which is known to be non-improvable for convex stochastic optimization \citep{nemirovskij1983problem}.
 
 Moreover, we highlight that this result holds for the \textit{last iterate} of \algname{PAGER}, while the standard analysis of first order methods for convex functions guarantees convergence for the average iterate \citep{FirstorderSO_Lan}. The last iterate convergence for convex objectives was only recently established for \algname{SGD} by following an involved analysis with a careful control of iterates via suffix-averaging scheme  \citep{fontaine2021convergence}. 

\subsection{Finite sum case}\label{subsec:finit_sum}
Let $f(\cdot)$ have the finite sum form \eqref{eq:problem_finite_sum}. Then we obtain the following result.
\begin{theorem}\label{thm:PAGE_w_restarts_finite_sum}
    Let $f(\cdot)$ have the form \eqref{eq:problem_finite_sum} and satisfy Assumptions \ref{ass:l-smooth}, \ref{as:lojasiewicz} (with $\al \in [1, 2)$) and \ref{as:avg_smoothness_page}, let the sequences be chosen as $p_k = \fr{1}{n+1}$, $b_k^{\prime} = 1$, $b_k = n$, $T_k = \Theta\big( 2^{\fr{(2-\al)k}{\al}} \big)$, $\eta_k = \Theta\big( 1 \big) $.
    Then, for any $\epsilon_f > 0$, Algorithm \ref{alg:PAGE_w_restarts} returns a point $x$ with $\Exp{f(x) - f^{\star}} \leq \epsilon_f$ after $N := \sum_{k=0}^{K-1} T_k =  \cwO\big( n + \sqrt{n} \kappa  \epsilon_f^{-\fr{2-\al}{\al} }\big)$ iterations, where $\kappa = \nfr{\cL}{\mu}$ The expected total computational cost (sample complexity) is $\cwO\big( n + \sqrt{n} \kappa \epsilon_f^{-\fr{2-\al}{\al}} \big) $. 
\end{theorem}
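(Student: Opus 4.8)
The plan is to combine a per-stage Lyapunov analysis of the \algname{PAGE} estimator with the restart mechanism, mirroring the argument behind the online Theorem~\ref{thm:PAGE_online_w_restarts1} but exploiting that in the finite-sum setting the full-batch choice $b_k = n$ produces the \emph{exact} gradient, so the refresh step injects no variance. For a fixed stage $k$ I would track the Lyapunov function $\Phi_t := \delta_t + \fr{\eta_k}{2 p_k} V_t$, where $\delta_t := \Exp{f(x_t) - f^\star}$ and $V_t := \Exp{\sqnorm{g_t - \nabla f(x_t)}}$. The two \algname{PAGE} ingredients are: (i) the $L$-smooth descent lemma applied to $x_{t+1} = x_t - \eta_k g_t$, giving $\delta_{t+1} \le \delta_t - \fr{\eta_k}{2}\Exp{\sqnorm{\nabla f(x_t)}} + \fr{\eta_k}{2} V_t - \rb{\fr{1}{2\eta_k} - \fr{L}{2}}\Exp{\sqnorm{x_{t+1}-x_t}}$, and (ii) the variance recursion from Assumption~\ref{as:avg_smoothness_page} with $b_k^{\prime} = 1$ and exact refresh, $V_{t+1} \le (1 - p_k) V_t + (1-p_k)\cL^2 \Exp{\sqnorm{x_{t+1}-x_t}}$. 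The weight $\fr{\eta_k}{2 p_k}$ is chosen precisely so the two $V_t$ contributions telescope into $\Phi_t$'s variance term, while the step-size $\eta_k$ (constant across stages, of order $1/(\cL\sqrt{n})$, which is what $\Theta(1)$ in $k$ means here, with $p_k = \fr{1}{n+1}$ forcing the $\sqrt{n}$) makes the coefficient of $\Exp{\sqnorm{x_{t+1}-x_t}}$ nonpositive. This yields the clean one-step bound $\Phi_{t+1} \le \Phi_t - \fr{\eta_k}{2}\Exp{\sqnorm{\nabla f(x_t)}}$.

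Next I would convert this into a linear contraction using a \emph{stage-local} strong-P{\L} constant. Raising Assumption~\ref{as:lojasiewicz} to the power $2/\al$ and applying Jensen (valid since $2/\al \ge 1$) gives $\Exp{\sqnorm{\nabla f(x_t)}} \ge 2\mu\, \delta_t^{2/\al}$. While the stage keeps $\delta_t$ above its target level $\delta_{k+1} \approx \tfrac12 \delta_k$, I can linearize $\delta_t^{2/\al} = \delta_t \cdot \delta_t^{(2-\al)/\al} \ge \delta_t\, \delta_{k+1}^{(2-\al)/\al}$, so $f$ behaves like a $2$-P{\L} function with effective constant $\mu_k := \mu\, \delta_k^{(2-\al)/\al}$ and effective condition number $\kappa_k := \cL/\mu_k = \kappa\, \delta_k^{-(2-\al)/\al}$. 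Maintaining the invariant $V_t = \cO(\delta_t)$ — which holds at the start of stage $k$ because $\bar{g}_k$ is carried over from the previous stage and is an accurate (exactly refreshed) estimator, and is preserved by induction under the small step-size — I obtain $\Phi_{t+1} \le \rb{1 - \Theta(\eta_k \mu_k)}\Phi_t = \rb{1 - \Theta(1/(\sqrt{n}\,\kappa_k))}\Phi_t$.

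It then suffices to run each stage for $T_k = \Theta\rb{\sqrt{n}\,\kappa_k \log(\cdot)} = \Theta\rb{\sqrt{n}\,\kappa\, 2^{(2-\al)k/\al}}$ iterations (hiding $\sqrt{n}\kappa$ and logarithmic factors in the stated $T_k = \Theta(2^{(2-\al)k/\al})$), which halves $\Phi$, hence the gap, so $\delta_k \asymp 2^{-k}\delta_0$. Summing the geometric series $N = \sum_{k=0}^{K-1} T_k$ (ratio $2^{(2-\al)/\al} > 1$ for $\al < 2$) is dominated by its last term, and choosing $K = \Theta\rb{\log(\delta_0/\epsilon_f)}$ so that $\delta_K \le \epsilon_f$ gives $2^{(2-\al)K/\al} \asymp \epsilon_f^{-(2-\al)/\al}$ and therefore $N = \cwO\rb{n + \sqrt{n}\,\kappa\, \epsilon_f^{-(2-\al)/\al}}$, the additive $n$ coming from the initial full-gradient $\bar{g}_0$. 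For the sample complexity, each inner iteration costs $p_k b_k + (1-p_k)\cdot 2 b_k^{\prime} = \fr{n}{n+1} + \fr{2n}{n+1} = \Theta(1)$ stochastic gradient evaluations in expectation, so the total matches $N$ up to the initial $n$, giving the claimed $\cwO\rb{n + \sqrt{n}\,\kappa\,\epsilon_f^{-(2-\al)/\al}}$.

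The main obstacle is the per-stage contraction of $\Phi_t$ with the \emph{correct} effective constant $\mu_k$: one must simultaneously (a) keep the variance term comparable to the gap across the Bernoulli refreshes and, most delicately, across stage boundaries where $\bar{g}_k$ is reused rather than recomputed, and (b) ensure the local linearization $\delta_t^{2/\al} \ge \delta_t\,\delta_{k+1}^{(2-\al)/\al}$ remains valid throughout the stage — i.e. that $T_k$ is large enough to reach the target gap while the gap does not collapse in a way that invalidates the chosen $\mu_k$. Calibrating $T_k$ to reduce the gap by a fixed factor consistent with $\mu_k$, and then verifying the geometric bookkeeping that turns the stagewise halving into the polynomial $\epsilon_f^{-(2-\al)/\al}$ rate, is the technical crux; the remainder follows the finite-sum \algname{PAGE} template for $2$-P{\L} objectives.
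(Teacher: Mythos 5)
Your proposal shares the paper's overall architecture (PAGE descent lemma plus variance recursion, a Lyapunov function, stagewise halving, geometric summation of stage lengths), but the inner-stage argument has a genuine gap, and it sits exactly where you flagged the "technical crux." Your plan cancels the variance term completely in the Lyapunov recursion and then recovers a contraction by maintaining the invariant $V_t = \cO(\delta_t)$ under a \emph{constant} step size $\eta \sim 1/(\sqrt{n}\cL)$. This invariant cannot be propagated: the variance contracts at rate at most $p_k = \fr{1}{n+1}$ per iteration (that is the only mechanism reducing $V_t$, since refreshes occur with probability $p_k$), while the gap $\delta_t$ can shrink at relative rate up to $\eta L \sim \nfr{L}{(\sqrt{n}\cL)}$ per iteration, which is far larger than $\nfr{1}{n}$ for large $n$. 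Hence $V_t/\delta_t$ can grow within a stage and the conversion of $-\eta\mu\delta_t^{2/\al}$ into $-\Theta(\eta\mu_k)\Phi_t$ breaks down. The same obstruction reappears if you try to repair the argument by retaining part of the $-V_t$ slack: writing $\delta_t = \Phi_t - \fr{\eta}{2p}V_t$ and applying $(1-x)^a \geq 1-ax$ produces a positive cross term $\fr{\eta^2\mu}{\al p}V_t\Phi_t^{\fr{2-\al}{\al}}$ whose absorption requires $\eta \lesssim \fr{\al p}{\mu}\Phi_t^{-\fr{2-\al}{\al}}$, i.e., a step size that \emph{depends on the current accuracy level} and is much smaller than $1/(\sqrt{n}\cL)$ in early stages. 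Also, your justification at stage boundaries is incorrect: $\bar{g}_k$ carried over from the previous stage is \emph{not} an exactly refreshed estimator; it is whatever the PAGE estimator happened to be at time $T_{k-1}$.

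This missing constraint is precisely the content of the paper's proof (Theorem~\ref{thm:PAGE_w_restarts_finite_sum_detailed}): the step size is chosen as $\eta_k = \min\cb{ \fr{1}{2\sqrt{n}\cL}, \fr{\al}{4\mu(n+1)}\rb{\fr{2^k}{\bar{\Psi}_0}}^{\fr{2-\al}{\al}} }$, an \emph{increasing} schedule that caps at $1/(2\sqrt{n}\cL)$ only once the gap is small enough — the paper explicitly identifies this as the reason plain \algname{PAGE} with constant parameters fails here and \algname{PAGER}'s restarts are needed. With this schedule the paper keeps the negative term $-\fr{p\lambda}{2}G_t\rb{1 - \fr{4\eta\mu(n+1)}{\al}\Psi_t^{\fr{2-\al}{\al}}}$ nonpositive (Lemmas~\ref{le:PAGE_online} and \ref{le:useful_fact_1}), obtains $\Psi_{t+1} \leq \Psi_t - \eta_k\mu\Psi_t^{2/\al}$ for the \emph{full} Lyapunov function, and finishes each stage with the polynomial-decay Lemma~\ref{lemma:app:3} rather than a stage-local linear contraction; stage transitions are handled not by refreshing but by noting $\eta_{k+1}\geq\eta_k$ implies $\lambda_{k+1}\leq\lambda_k$, so $\bar{\Psi}_{k+1}\leq\Psi_{T_k}$. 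Your geometric bookkeeping, per-iteration cost accounting ($p_k b_k + 2(1-p_k)b_k' \leq 3$), and final complexity are consistent with the paper, but without the accuracy-dependent step-size constraint the per-stage progress claim is unproven.
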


The proof is deferred to Appendix~\ref{sec:variace_reduction_appendix}. Theorem~\ref{thm:PAGE_w_restarts_finite_sum} quantifies the improvement of \algname{PAGER} over \algname{GD} in the finite sum setting in terms of $n$ and over \algname{SGD} in terms of $\epsilon_f$, see Table~\ref{tbl:complexity_f} for comparison. Recall that \algname{GD} has sample complexity $\cO\big( n \kappa \epsilon_f^{\fr{-(2-\al)}{\al}} \big)$. When $n$ is large, we get the improvement of order $\sqrt{n}$. Notice that in the limit $\al \rightarrow 2$, it matches the best known result for $2$-P{\L} objectives \cite{PAGE}.

\section{Conclusion}

We analyzed the complexity of \algname{SGD} when the objective satisfies global K{\L} inequality and the queries  from stochastic gradient oracle satisfy weak expected smoothness. 
We introduced a general framework for this analysis which resulted in a sample complexity of $\mathcal{O}(\epsilon^{-(4-\al)/\al})$ for \algname{SGD} with objectives satisfying $\al$-P{\L} condition. 
We also demonstrated the tightness of this rate under the specific choice of stepsizes. Last but not least, we developed a modified \algname{SGD} with variance reduction and restarting (\algname{PAGER}), which improves the sample complexity of \algname{SGD} for the whole spectrum of parameters $\al \in [1,2)$ and achieves the optimal rate for the important case of $1$-P{\L} objectives. 

\section*{Acknowledgements}
We would like to thank Anas Barakat and Anastasia Kireeva for valuable discussions. The work of I.~Fatkhullin was supported by ETH AI Center doctoral fellowship.


\bibliographystyle{plain}
\bibliography{ref_sgd.bib}

\clearpage
\appendix
\newpage

\tableofcontents
\newpage

\begin{center}
    \bfseries\Large Appendix
\end{center}

\section{Examples}\label{sec:examples_appendix}
\subsection{$\al$-P{\L} Functions}

In this section, we provide some examples and applications of global K{\L} functions. Particularly, we focus on the class of  $\al$-P{\L} functions with $\al \in [1, 2]$.  We start with simple one dimensional functions. 
\begin{example}
	Consider $f(x) = c \cdot |x|^q$, where $q > 1$, $c > 0$. $f(x)$ satisfies Assumption~\ref{as:lojasiewicz} with $\al = \fr{q}{q - 1}$ and $\mu = \fr{c^{\nfr{2}{q}} q^2}{2}$. 
\end{example}

\begin{example}
	Consider $f(x) = \fr{e^x + e^{-x}}{2} - 1$. $f(x)$ satisfies Assumption~\ref{as:lojasiewicz} with $\al=1$ and $\mu = \nfr{1}{2}$. 
\end{example}

\begin{example}
	Consider $f(x) = cosh(x) + 8 \cdot cosh(sin(x)) - 9$, where $cosh(x) = \nfr{(e^x + e^{-x})}{2}$. The derivative is $f^{\prime}(x) = sinh(x) + 8 \cdot cos(x) \cdot sinh(sin(x))$ and $| f^{\prime}(x) | \geq 10^{-2} \cdot f(x)$ for all $x$. Then $f(x)$ satisfies Assumption~\ref{as:lojasiewicz} with $\al=1$ and $\mu = 5 \cdot 10^{-5}$. 
\end{example}

Note that the functions in Example 1 and Example 2 are convex, whereas the function in Example 3 is nonconvex. 



The following proposition shows that  K{\L} property is preserved under some operators such as direct addition. 
\begin{proposition}\label{prop:separable_KL}
    Let $f(\cdot)$ be a separable function, i.e., $f(x) \eqdef \fr{1}{n} \sum_{i=1}^n f_i(x_i)$, where $x = (x_1,\dots,x_n)$, $x_i \in\R^{d_i}$, $\sum_{i=1}^n d_i = d$. Let each $f_i(\cdot)$ satisfy K{\L} inequality (Assumption~\ref{ass:kl_our}) with $\phi_i(t)$. Then $f(\cdot)$ also satisfies K{\L} inequality with $\phi(t) \eqdef \fr{1}{\sqrt{n}} \min_{1\leq i\leq n} \phi_i(t)$.
\end{proposition}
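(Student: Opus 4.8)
The plan is to exploit the separable structure to reduce the claim to a one-dimensional Jensen-type inequality. First I would record the two consequences of separability. Since $f(x)=\frac1n\sum_{i=1}^n f_i(x_i)$ with the blocks $x_i$ disjoint, the infimum decouples, giving $f^\star=\frac1n\sum_{i=1}^n f_i^\star$ with $f_i^\star=\inf_{x_i}f_i(x_i)$; and the gradient is block-diagonal, $\nabla_{x_i}f(x)=\frac1n\nabla f_i(x_i)$, so that
\begin{equation*}
\sqnorm{\nabla f(x)}=\frac1{n^2}\sum_{i=1}^n\sqnorm{\nabla f_i(x_i)}.
\end{equation*}
Writing $t_i:=f_i(x_i)-f_i^\star\ge 0$, the objective gap becomes $f(x)-f^\star=\frac1n\sum_{i=1}^n t_i$.

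Next I would apply the per-block K{\L} inequality $\norm{\nabla f_i(x_i)}\ge\phi_i(t_i)$ (Assumption~\ref{ass:kl_our}) together with the crude pointwise bound $\phi_i\ge\Phi:=\min_{1\le j\le n}\phi_j$, yielding
\begin{equation*}
\sqnorm{\nabla f(x)}\ \ge\ \frac1{n^2}\sum_{i=1}^n\phi_i^2(t_i)\ \ge\ \frac1{n^2}\sum_{i=1}^n\Phi^2(t_i).
\end{equation*}
The target $\phi^2(f(x)-f^\star)=\frac1n\Phi^2\rb{\frac1n\sum_{i=1}^n t_i}$ would then follow from the single scalar estimate $\frac1n\sum_{i=1}^n\Phi^2(t_i)\ge\Phi^2\rb{\frac1n\sum_{i=1}^n t_i}$, after which taking square roots gives $\norm{\nabla f(x)}\ge\phi(f(x)-f^\star)$. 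The remaining regularity of $\phi$ comes for free: $\phi(0)=\frac1{\sqrt n}\min_j\phi_j(0)=0$, and $\phi$ is continuous as a minimum of finitely many continuous functions.

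The heart of the argument, and the main obstacle, is the scalar inequality $\frac1n\sum_i\Phi^2(t_i)\ge\Phi^2\rb{\frac1n\sum_i t_i}$, which is exactly Jensen's inequality applied to $\Phi^2$. This is immediate when the $\phi_i$ coincide, since then $\Phi^2=\phi_1^2$ and Assumption~\ref{ass:kl_our} supplies convexity of $\phi_1^2$ directly; in that case the $\frac1{\sqrt n}$ factor is also tight. The subtlety is that for genuinely distinct $\phi_i$ the function $\Phi^2=\min_j\phi_j^2$ is a pointwise minimum of convex functions and need not be convex, so Jensen cannot be invoked verbatim. I would therefore treat this step with care: first establish that each $\phi_i^2$ is nondecreasing on $\R^+$ (a convex function vanishing at $0$ and staying nonnegative is nondecreasing), which lets one control $\Phi$ through the block carrying the largest gap, and then push the averaging bound through in the regime where $\min_j\phi_j^2$ remains convex, or, if the sharp constant matters, replace the crude $\min$ bound by the exact infimal-convolution lower envelope of the $\phi_j^2$. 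Everything else is bookkeeping; convexity of the pointwise minimum is the one place demanding genuine attention.
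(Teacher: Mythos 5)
Your decomposition is exactly the paper's: split the squared gradient norm across blocks, apply the per-block K{\L} inequality, pass to $\Phi:=\min_j\phi_j$, and finish with Jensen applied to $\Phi^2$. The obstacle you single out is therefore the right one, and you should know that the paper does not resolve it either: the paper's proof justifies this step by ``convexity of $\phi(t)=\frac{1}{\sqrt{n}}\min_{1\leq i\leq n}\phi_i(t)$ and Jensen's inequality,'' but a pointwise minimum of convex functions need not be convex, so the published argument is incomplete at precisely the point you flagged.

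However, your proposed repairs do not close the gap, and in fact it cannot be closed with the constant $\frac{1}{\sqrt{n}}$. The monotonicity route only yields the weaker constant $\frac{1}{n}$: since some block has $t_{i^*}\geq \bar{t}:=\frac{1}{n}\sum_i t_i$, one gets $\frac{1}{n^2}\sum_i\phi_i^2(t_i)\geq \frac{1}{n^2}\phi_{i^*}^2(t_{i^*})\geq \frac{1}{n^2}\Phi^2(\bar{t})$, i.e.\ $\|\nabla f(x)\|\geq \frac{1}{n}\Phi\big(f(x)-f^\star\big)$. And the Jensen-type inequality you actually need, $\frac{1}{n}\sum_i\Phi^2(t_i)\geq \Phi^2(\bar{t})$, is genuinely false: take $n=2$, $\phi_1^2(t)=t$ and $\phi_2^2(t)=\max(2t-1,0)$, both admissible under Assumption~\ref{ass:kl_our}; with $t_1=\frac{3}{2}$, $t_2=\frac{1}{2}$ the left-hand side is $\frac{3}{4}$ while $\Phi^2(1)=1$. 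This choice is realizable by actual functions --- $f_1(x_1)=x_1^2/4$ saturates $\phi_1$, and $\phi_2$ vanishes on $[0,\frac{1}{2}]$, so it permits an $f_2$ with a critical point at level $\frac{1}{2}$ above its global minimum --- and at the corresponding point the proposition's claimed bound $\|\nabla f(x)\|^2\geq \frac{1}{2}\Phi^2(1)=\frac{1}{2}$ fails ($\|\nabla f(x)\|^2=\frac{3}{8}$). So no alternative argument can restore $\frac{1}{\sqrt{n}}$ in general; what one can honestly prove along these lines is the statement with $\phi=\frac{1}{n}\min_i\phi_i$, or with $\frac{1}{\sqrt{n}}$ under the extra hypothesis that $\min_i\phi_i^2$ is convex (e.g.\ identical $\phi_i$, the case you already noted).
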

\begin{proof}
By separability and K{\L} condition we have
\begin{eqnarray}
    \sqnorm{\nabla f(x)} &=& \sum_{i=1}^n \fr{1}{n^2} \sqnorm{\nabla f_i(x_i)} \notag \\
    &\geq& \sum_{i=1}^n \fr{1}{n^2} \phi_i^2\rb{ f_i(x_i) - f_i^{inf}}
    \notag \\
    &\overset{(i)}{\geq}& \fr{1}{n} \sum_{i=1}^n  \phi^2\rb{ f_i(x_i) - f_i^{inf}}
    \notag \\
    &\overset{(ii)}{\geq}&  \phi^2\rb{\fr{1}{n} \sum_{i=1}^n  f_i(x_i) - f_i^{inf}}
     \notag \\
    &\overset{(iii)}{\geq}&  \phi^2\rb{ f(x) - f^{inf}},
\end{eqnarray}
where $(i)$ holds by definition of $\phi(t)$, $(ii)$ is due to convexity of $\phi(t) \eqdef \fr{1}{\sqrt{n}} \min_{1\leq i\leq n} \phi_i(t)$ and Jensen's inequality and $(iii)$ follows from $\fr{1}{n} \sum_{i=1}^n \inf_{x_i} f_i(x_i) \leq \inf_{x} \fr{1}{n} \sum_{i=1}^n f_i(x)$ for any $x = (x_1,\dots,x_n)$.
\end{proof}

The above Proposition~\ref{prop:separable_KL} implies, in particular, that if we have a separable function $f(x) = \sum_{i=1}^{n} f_i(x_i)$ and each $f_i(x_i)$ is $1$-P{\L} with $\mu_i$, $i = 1,\ldots,n$, then $f(x)$ satisfies $1$-P{\L} with $\mu = \fr{\mu_{min}}{n}$ .
\begin{example}
	Consider $f(x,y) = cosh(x) + 8 \cdot cosh(sin(x)) + 0.5 \cdot cosh(y) + 2.5 \cdot cosh(sin(y)) - 12$. This function of two variables satisfies Assumption~\ref{as:lojasiewicz} with $\al=1$ and $\mu = 5 \cdot 10^{-5}$. 
\end{example}

 Now we list several problems which occur in applications and satisfy $\al$-P{\L} with $\al = 1$.
 \begin{example}[Policy gradient optimization in RL]\label{ex:PO_RL}
     Consider a Markov Decision Process (MDP) $M=\{\mathcal{S}, \mathcal{A}, \mathcal{P}, \mathcal{R}, \gamma, \rho\}$, where $\mathcal{S}$ is a state space; $\mathcal{A}$ is an action space; $\mathcal{P}$ is a transition model, where $\mathcal{P}(s^{\prime} | s, a)$ is the transition density to state $s^{\prime}$ from a given state $s$ under a given action $a$; $\mathcal{R} = \mathcal{R}(s,a)$ is the bounded reward function for state-action pair $(s,a)$; $\gamma \in [0,1)$ is the discount factor; and $\rho$ is the initial state distribution. The behavior of the agent in MDP is characterized by the parametric policy $\pi_\theta(a|s)$ over $\mathcal{S}\times\mathcal{A}$, which denotes the probability of taking action $a$ at the state $s$. The policy $\pi_\theta$ is assumed to be differentiable with respect to parameter $\theta\in\R^d$. Let $\tau = \cb{s_t, a_t}_{t\geq0}$ be a trajectory generated by the policy $\pi_\theta$ and it is distributed according to distribution $\tau \sim p(\tau | \pi_\theta)$. The expected return of the policy $\pi_\theta$ is defined by  
     $$
     J\left({\theta}\right) \eqdef \mathbb{E}_{\tau}\left[\sum_{t=0}^{\infty} \gamma^{t} \mathcal{R}\left(s_{t}, a_{t}\right)\right].
     $$
     The goal of policy-based methods is to find $\theta$ which maximizes the expected return $\theta^{\star} \in \argmax_{\theta} J(\theta)$. It was recently shown that the above objective satisfies $1$-P{\L} assumption
     $$
     \norm{\nabla J(\theta)} \geq \sqrt{2\mu} \rb{J^{\star} - J(\theta)} \qquad \text{for all } \theta \in \R^d
     $$
     under the standard assumptions on $\pi_\theta$ and $\rho$ such as non-degenerate Fisher matrix and transferred compatible function approximation error \citep{Mei_SoftMax_Entropy_PG,Agarwal_TheoryPolicyGradient2020,yuan2021general}.
 \end{example}
 
 \begin{example}[Operations management problems]\label{ex:OR_problem} 
 In applications such as supply chain or revenue management \citep{Chen_OR_2022}, problems can often be formulated as 
     \begin{equation}\label{eq:OR_problem}
     \min_{x\in \mathcal{X}} F(x) \eqdef \Exp{ \phi(x \wedge \xi) },
     \end{equation}
     where $\mathcal{X} $ is a convex compact subset of $\R^d$, $\xi$ is a random vector,  $\wedge$ denotes a component-wise minimum and $\phi(\cdot)$ is convex. As a result, $F(\cdot)$ becomes non-convex. On the other hand, such problem often admits a convex reformulation 
      \begin{equation}\label{eq:cvx_reform}
     \min_{y\in \mathcal{Y}} G(y) \eqdef F(g^{-1}(y)) ,
     \end{equation}
      where $g(x)=\Exp{x\wedge\xi}$ and function $G(\cdot)$ is convex. Suppose $g: \mathcal{X} \rightarrow \mathcal{Y}$ is a bijective differentiable map with $\nabla g(x) \succeq \lambda I$, $\lambda > 0$ for all $x \in \mathcal{X}$, then 
     function $F(\cdot)$ satisfies $1$-P{\L} condition. This is because: for any $x$ with $g(x)=y$,
\begin{eqnarray}
     F(x) - F(\xstar) &=& G(y) - G(y^{\star}) \notag\\
     &\leq& \langle \nabla G(y) , y - y^{\star} \rangle \notag\\
     &\leq&  \norm{\nabla G(y)} \norm{ y - y^{\star}}  \notag\\
     &=&  \norm{\nabla g^{-1}(y) \nabla F(x)} \norm{ y - y^{\star}}  \notag\\
      &\leq & \fr{D_{\mathcal{Y}}}{\lambda} \norm{ \nabla F(x)} , \notag
\end{eqnarray}
where $D_{\mathcal{Y}}$ is the diameter of the set $\mathcal{Y}$. Therefore, $F(\cdot)$ is $1$-P{\L} with $\mu = \fr{1}{2} \fr{\lambda^2}{D_{\mathcal{Y}}^2}$.

 \end{example}

\begin{remark}
Note that even though the problem in Example~\ref{ex:OR_problem} satisfies $1$-P{\L} condition, our theory developed in this work is not directly applicable to solve this problem The reason is that this problem has a compact constraint and therefore requires an appropriate generalization of P{\L} condition, e.g., using the notion of gradient mapping or the subgradient of the indicator function of the set $\mathcal{X}$, see \cite{karimi2016linear} for examples. However, our theory becomes applicable for this problem if we additionally assume that the solution of \eqref{eq:OR_problem} lies in the interior of $\mathcal{X}$ and all the iterates $\cb{x_t}_{t\geq 0}$ generated by the method remain in the interior of $\mathcal{X}$. 
\end{remark}

\subsection{K{\L} Functions}
\begin{example}
A commonly used type of loss function in machine learning applications is a squared cross entropy (CE), it is given by 
\begin{align*}
    \ell(x,y):=\sum_i y_i\log\Big(\frac{e^{x_i}}{\sum_j e^{x_j}}\Big)^2.
\end{align*}
Under such loss function, it is known \cite{scaman2022convergence} that K{\L} condition holds with corresponding function $\phi(t)=\min\{t,\sqrt{t}\}$. This is function is both positive and $\phi(t)^2$ is convex. Next, we apply the result of Theorem \ref{lemma_man} to obtain the convergence rate of \algname{SGD} for this type of loss functions assuming the stochastic gradient estimator satisfying Assumption \ref{ass:k_ES} with $h(t)=t$. 
First step is to obtain the stationary point $r(\eta)$ using Equation \eqref{eq:statioanry}.
\begin{align*}
    2a\eta^2t+2\frac{d\eta^2}{b}=\eta\big(\min\{t,\sqrt{t}\}\big)^2.
\end{align*}
It is straightforward to see that for small enough $\eta$, the stationary point is smaller than 1. In this case, $\min\{t,\sqrt{t}\}$ is $t$. Therefore, we are in the setting of Corollary \ref{corr:main1} with $\alpha=1, \beta=1$, and $\tau=0$. This implies that the interation (and sample) complexity of \algname{SGD} is of the order $\mathcal{O}(\epsilon_f^{-3})$. Moreover, if $A$ in Assumption~\ref{ass:k_ES} is zero, using a similar argument and the result of Theorem~\ref{thm:PAGE_online_w_restarts1}, one can derive that \algname{PAGER} give us $\mathcal{O}(\epsilon_f^{-2})$ sample complexity.
\end{example}
To illustrate the generality of the result of Theorem \ref{lemma_man}, next we present the convergence rate of \algname{SGD} for objective functions that satisfy the global K{\L} condition with $\phi(t)=\sqrt{t\log(t+1)}$ under Assumption~\ref{ass:k_ES} with $h(t)=\log(1+t)$.
\begin{example}
Consider the scenario in which the objective function satisfies the global K{\L} condition with $\phi(t)=\sqrt{t\log(t+1)}$ and a stochastic gradient estimator satisfies Assumption~\ref{ass:k_ES} with $h(t)=\log(1+t)$. In this case,  Equation \eqref{eq:statioanry} becomes
\begin{align*}
    2a\eta^2\log(1+t)+2\frac{d\eta^2}{b}=\eta t\log(1+t).
\end{align*}
Defining $u:=\log(t+1)$ yields
\begin{align*}
    \eta\big(2a u+2\frac{d}{b}\big)= ( e^{u}-1)u\approx (u+\frac{u^2}{2})u.
\end{align*}
The last approximation is true since for small enough $\eta$, $u$ is less than one. Solving the above cubic equation leads to a solution that is of the order $u=\Theta(\sqrt{\eta})$ or equivalently $r(\eta)=\Theta(\exp(\sqrt{\eta})-1)$. Note that for small enough $\eta\ll 1$, we have $\Theta(\exp(\sqrt{\eta})-1)=\Theta(\sqrt{\eta}+\eta)=\Theta(\sqrt{\eta})$, i.e., $\nu=0.5$. To obtain $\zeta$ in Theorem \ref{lemma_man}, we use Equation \eqref{eq:condition1} which leads to
\begin{align*}
    &1+\frac{a\eta^2_k}{1+r(\eta_k)} -\frac{\eta_k}{2}\Big(\frac{r(\eta_k)}{1+r(\eta_k)}+\log(1+r(\eta_k))\Big)\\
    &=1+\frac{a\eta^2_k}{1+\sqrt{\eta_k}} -\frac{\eta_k}{2}\Big(\frac{\sqrt{\eta_k}}{1+\sqrt{\eta_k}}+\log(1+\sqrt{\eta_k})\Big)=1-\omega_k k^{-1},
\end{align*}
In order to have the above equality, we can have $\eta_k=\Theta(k^{-1})$. Finally, the result of Theorem \ref{lemma_man} yields $\delta_k=\mathcal{O}(k^{-\zeta\nu})=\mathcal{O}(k^{-0.5})$.
\end{example}

\section{Proofs for Section \ref{sec:main}}

\subsection{Proof of Lemma \ref{lemma:dynamic_inner}}
\textbf{Lemma 1.} 
Under Assumptions \ref{ass:l-smooth}, \ref{ass:kl_our}, and \ref{ass:k_ES} with constant cost, i.e., $b := b_k$,  we obtain
\begin{align*}
         \delta_{t+1}\leq \delta_t+a\eta^2\cdot h\big(\delta_t\big)-\frac{\eta}{2}\phi^2(\delta_t)+ \fr{d\eta^2}{b},
\end{align*}
where $\delta_t := \Exp{f(x_t) - f^{\star}}$, $a := L A$, $d := \fr{L C}{2 }$, $\eta := \eta_k$.

\begin{proof}
Let $\{x_0,x_1,x_2,...\}$ denote the sequence of points that are obtained from SGD. From the L-smoothness assumption, we obtain
\begin{align*}
    f(x_{t+1})\leq f(x_t) -\eta \langle\nabla f(x_t),g_k(x_t,\xi_t)\rangle + \frac{L}{2}||x_{t+1}-x_t||^2.
\end{align*}
Taking the conditional expectation of both side of the above inequality given $x_t$ yields
\begin{align*}
    & \mathbb{E}[f(x_{t+1})-f(x_t)|x_t]\leq -\eta \mathbb{E}\big[\langle\nabla f(x_t),g_k(x_t,\xi_t)\rangle|x_t\big] + \frac{L}{2}\eta^2\mathbb{E}\big[||g_k(x_t,\xi_t)||^2|x_t\big].
\end{align*}
Using Assumption \ref{ass:k_ES} and the fact that oracle's queries are unbiased, we obtain
\begin{align*}
    \mathbb{E}[f(x_{t+1})\!-\!f(x_t)|x_t]\leq  LA\eta^2\cdot h\big(f(x_t)\!-\!f^*\big)-\eta(1-\!\frac{L}{2}\eta B)\cdot\phi^2\big(f(x_t)\!-\!f^*\big)+\frac{L}{2}\eta^2 \frac{C}{b},
\end{align*}
where $b=b_k$ denotes the cost of gradient $g_k$. 
Since the choice of learning rate is ours, we select it such that $(1-\frac{L}{2}\eta B)\geq\frac{1}{2}$. Using Assumption \ref{ass:kl_our} for points around the optimum point $x^*$, we obtain
\begin{align*}
    & \mathbb{E}[\varrho_{t+1}|x_t]-\varrho_t\leq a\eta^2\cdot h\big(\varrho_t\big)-\frac{\eta}{2}\phi^2(\varrho_t)+\frac{d\eta^2}{b},
\end{align*}
where $\varrho_t:=f(x_t)-f^*$, $a:= LA$, and $d:=\frac{LC}{2}$.
Let $\delta_t=\mathbb{E}[\varrho_t]$.
Using the fact $h(t)$ is concave and $\phi^2$ is convex, and Jensen's inequality, we obtain the result. 
\end{proof}


\subsection{Proof of Theorem \ref{lemma_man}}
We first prove the following technical lemma. 

\begin{lemma}\label{ll:12}
Consider a series $\{r_t\}_{t\geq0}$ that for every integer $T>0$ satisfies the following inequality
\begin{align*}
    r_{t}\leq \prod_{i=1}^k(1-a_i i^{-1})^{T} r_0 + \mathcal{O}(k^{-b}),
\end{align*}
where $t=kT$ and $a<a_i<A\leq 1$ for some positive constants $a$ and $A$ and all $i$. Then, there exists $T$ such that $r_{t}=\mathcal{O}(t^{-b})$.
\end{lemma}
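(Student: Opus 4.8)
The goal is to prove Lemma~\ref{ll:12}, which asserts that the recursive bound
\[
    r_{t}\leq \prod_{i=1}^k(1-a_i i^{-1})^{T} r_0 + \mathcal{O}(k^{-b}),
\]
with $t = kT$ and $0 < a < a_i < A \leq 1$, yields $r_t = \mathcal{O}(t^{-b})$ for a suitable choice of the inner-loop length $T$.

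\textbf{Plan of attack.} The proof naturally splits into controlling the two summands: the geometric-type product term that multiplies the initial value $r_0$, and the accumulated residual term $\mathcal{O}(k^{-b})$. First I would bound the product $\prod_{i=1}^k (1 - a_i i^{-1})^T$. Using the elementary inequality $1 - x \leq e^{-x}$ (valid since each factor $a_i i^{-1} \in (0,1)$ for $i$ large, and the first few bounded factors only contribute a constant), I get
\[
    \prod_{i=1}^k (1 - a_i i^{-1})^T \leq \exp\!\Big( -T \sum_{i=1}^k a_i i^{-1} \Big) \leq \exp\!\Big( -T a \sum_{i=1}^k i^{-1} \Big),
\]
where I used $a_i > a$. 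Since $\sum_{i=1}^k i^{-1} \geq \log k$, this product is at most $\exp(-Ta\log k) = k^{-Ta}$. Thus the first term decays like $r_0\, k^{-Ta}$, and I can force this to decay at least as fast as $k^{-b}$ by choosing $T$ large enough that $Ta \geq b$, i.e. $T \geq b/a$. This is the key place where the freedom to select $T$ is exploited.

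\textbf{Combining the terms.} Once $Ta \geq b$ is ensured, both contributions to $r_t$ are $\mathcal{O}(k^{-b})$, so $r_t = \mathcal{O}(k^{-b})$. The final step is to translate the bound in terms of the outer-loop index $k$ into a bound in the total iteration count $t = kT$. Since $T$ is a fixed constant (depending only on $a$ and $b$, not on $k$), we have $k = t/T = \Theta(t)$, and hence $k^{-b} = \Theta(t^{-b})$. Therefore $r_t = \mathcal{O}(t^{-b})$, as claimed. I would note explicitly that $T$ is chosen once at the outset as $T = \lceil b/a \rceil$, so the constant hidden in $\mathcal{O}(\cdot)$ absorbs this fixed value.

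\textbf{Anticipated obstacles.} The main subtlety is the passage from $1 - a_i i^{-1} \leq e^{-a_i i^{-1}}$, which requires the factors to be genuinely in $(0,1)$; for small $i$ the quantity $a_i i^{-1}$ could in principle exceed $1$, but the hypothesis $a_i < A \leq 1$ together with $i \geq 1$ keeps each factor nonnegative, and in any case only finitely many (in fact, a number independent of $k$) early factors are affected, so they can be folded into a harmless multiplicative constant that does not alter the polynomial order. A second, more cosmetic point is that the stated residual $\mathcal{O}(k^{-b})$ is asserted uniformly in the recursion; I would treat its implied constant as fixed and simply add the two $\mathcal{O}(k^{-b})$ bounds. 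Overall the argument is short and elementary; the only genuine idea is that enlarging $T$ sharpens the geometric decay rate $k^{-Ta}$ until it matches the residual rate $k^{-b}$.
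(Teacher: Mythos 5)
Your proof is correct and follows essentially the same route as the paper's: the bound $1-x\leq e^{-x}$ applied factor-by-factor, the harmonic-sum lower bound $\sum_{i=1}^k i^{-1}\geq \log(k+1)$ giving the product decay $k^{-aT}$, and the choice $T=\lceil b/a\rceil$ to match the residual rate. Your write-up is in fact slightly more careful than the paper's, since you make explicit the final translation from the outer index $k$ to the total count $t=kT$ and verify that all factors lie in $(0,1)$.
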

\begin{proof}
Using the fact that $a_i$s are bounded and $1-x\leq \exp(-x)$, we obtain
\begin{align*}
    \prod_{i=1}^k(1-a_i i^{-1})^{T} \leq \exp\big(-aT\sum_{i=1}^k i^{-1}\big)\leq (k+1)^{-aT}.
\end{align*}
In the above inequality, we used $\sum_{i=1}^k i^{-1}\geq \int_1^{k+1}x^{-1}dx=\log(k+1)$. Selecting $T=\lceil b/a \rceil$ will imply the result. 
\end{proof}

\textbf{Theorem 1.}
Suppose there exist $\nu, \{\omega_j\}_{j\geq0}$, and $\zeta\geq0$ such that $\eta_k=\Theta(k^{-\zeta})$, $r(\eta_k)=\Theta(k^{-\zeta\nu})$, $|1-\omega_k|<1$, and
\begin{align*}
    &1+a\eta^2_k h'\big(r(\eta_k)\big)-\eta_k\phi'\big(r(\eta_k)\big)\phi\big(r(\eta_k)\big)=1-\omega_k k^{-1}.
\end{align*}
Then, $\delta_k=\mathcal{O}(k^{-\zeta\nu})$ and the iteration complexity of  Algorithm~\ref{alg:sgd} is $\mathcal{O}(\epsilon_f^{-1/(\zeta\nu)})$. 

\begin{proof}
Suppose, we are in the $k$ iteration of the outer-loop of Algorithm \ref{alg:sgd}. 
Using Lemma \ref{lemma:dynamic_inner} and the definition of $r(\eta)$ in \eqref{eq:statioanry}, we have
\begin{align*}
         \delta_{t+1}\leq \delta_t + a\eta_k^2 \Big(h\big(\delta_t\big)-h\big(r(\eta_k)\big)\Big)-\frac{\eta_k}{2}\Big(\phi^2(\delta_t)-\phi^2\big(r(\eta_k)\big)\Big).
\end{align*}
By defining $y_t:=\delta_t-r(\eta_k)$ and using the concavity of functions $h(\cdot)$ and $-\phi^{2}(\cdot)$, we obtain 
\begin{align*}
         y_{t+1}&\leq 
         y_t\Big(1+a\eta_k^2  h'\big(r(\eta_k)\big)-\eta\phi'\big(r(\eta_k)\big)\phi\big(r(\eta_k)\big)\Big).
\end{align*}
Given the assumption in Theorem \ref{lemma_man}, we have
\begin{align*}
    y_{t+1}\leq y_t \big(1-\omega_k k^{-1}\big).
\end{align*}
Recall that  $k$ corresponds to the index of the outer-loop. After $t$ iterations of the inner-loop (in which index $k$ is fixed), we obtain
\begin{align}\label{eq:aftercondition}
    y_{t}\leq y_0 \big(1-\omega_k k^{-1}\big)^t.
\end{align}

This shows the rate at which the inner-loop of Algorithm \ref{alg:sgd} (lines 4-5) converges to point $x$, where $r(\eta_k)=f(x)-f^*$.
Based on Equation \eqref{eq:aftercondition}, after setting $\eta=\eta_1$ and $T_1$ rounds of  the inner-loop, we obtain
\begin{align*}
    y_{T_1}\leq y_0 \big(1-\omega_1\big)^{T_1} \Rightarrow \delta_{T_1}\leq y_0 \big(1-\omega_1\big)^{T_1}+r(\eta_1). 
\end{align*}
Continuing this process, after updating $\eta=\eta_2$ and going through the inner-loop for another $T_2$ iterations imply
\begin{align*}
    y_{T_1+T_2}\leq  (\delta_{T_1}-r(\eta_2))\big(1-\frac{\omega_2}{2}\big)^{T_2} \Rightarrow \delta_{T_1+T_2}\leq  \big(y_0 \big(1-\omega_1\big)^{T_1}+r(\eta_1)-r(\eta_2)\big)\big(1-\frac{\omega_2}{2}\big)^{T_2} +r(\eta_2). 
\end{align*}
The above inequality is because before starting the inner-loop for the second round, the initial point for $y$ is $y_{T_1}:=\delta_{T_1}-r(\eta_2)$.
Using induction and after $k$ rounds of outer-loop, we obtain
\begin{align*}
    \delta_{t}&\leq  \prod_{i=1}^k\big|1-\frac{\omega_i}{i}\big|^{T_i}y_0+\prod_{i=1}^k\big|1-\frac{\omega_i}{i}\big|^{T_i}\left(\sum_{j=1}^{k-1}\frac{r(\eta_j)-r(\eta_{j+1})}{\prod_{i'=1}^j\big(1-\frac{\omega_{i'}}{i'}\big)^{T_{i'}}}\right)+r(\eta_{k})\\
    &=\prod_{i=1}^k\big|1-\frac{\omega_i}{i}\big|^{T_i}y_0+\sum_{j=1}^{k-1}\big(r(\eta_j)-r(\eta_{j+1})\big)\prod_{i=j+1}^k\big|1-\frac{\omega_i}{i}\big|^{T_i}+r(\eta_{k})
\end{align*}
where $t=\sum_{j} T_j$. 
In Algorithm \ref{alg:sgd}, $T_i$ are selected to be $T$. 
Next, using Lemma \ref{ll:12}, we show there exist a positive constant $T$ such that $\delta_t=\mathcal{O}(t^{-\nu\zeta})$.
Following the proof of Lemma \ref{ll:12}, we have
\begin{align*}
    \delta_{t}&\leq\prod_{i=1}^k\big|1-\frac{\omega_i}{i}\big|^{T}y_0+\sum_{j=1}^{k-1}\big(r(\eta_j)-r(\eta_{j+1})\big)\prod_{i=j+1}^k\big|1-\frac{\omega_i}{i}\big|^{T}+r(\eta_{k})\\
    &\leq (k+1)^{-\omega T} y_0+\sum_{j=1}^{k-1}\big(r(\eta_j)-r(\eta_{j+1})\big)\left(\frac{j+1}{k+1}\right)^{\omega T}+r(\eta_{k}),
\end{align*}
where $t=kT$ and $\omega=\min_i \omega_i$.
Let $b:=\nu\zeta$ and $T:=\lceil (b+1)/\omega\rceil$.
Since $r(\eta)=\Theta(\eta^\nu)$ and $\eta_j=\Theta(j^{-\zeta})$, then there exists a constant $C>0$ such that
\begin{align*}
    \delta_{t}&\leq (k+1)^{-\omega T} y_0+C\sum_{j=1}^{k-1}\big(j^{-b}-(j+1)^{-b}\big)\left(\frac{j+1}{k+1}\right)^{\omega T}+\mathcal{O}(k^{-b})\\
    &\leq \mathcal{O}(k^{-b})+\frac{C}{(k+1)^{b+1}}\sum_{j=1}^{k-1}\Big((1+\frac{1}{j})^b-1\Big)(j+1)+\mathcal{O}(k^{-b}).
\end{align*}
Using $(1+x)^b-1\leq bx/(1-bx)$ for $x<1/b$ and $b\geq0$, we obtain
\begin{align*}
    \delta_{t}&\leq \mathcal{O}(k^{-b})+\frac{C'}{(k+1)^{b+1}} +\frac{Cb}{(k+1)^{b+1}}\sum_{j>b}^{k-1}\frac{j+1}{j-b}+\mathcal{O}(k^{-b})=\mathcal{O}(k^{-b}).
\end{align*}
where $C'\geq0$ is a constant corresponding to the part of the summation for $j\leq b$. The result follows from the fact that $k=t/T$ and $T$ is a constant.
\end{proof}


\subsection{Proof of Corollary \ref{corr:main1}}
\textbf{Corollary 1.} 
Consider a special case of Assumption~\ref{ass:k_ES} with $h(t)=t^\beta$ and $b_k=k^{\tau}$, where $\beta\in(0,1]$ and $\tau\geq0$. 
Suppose the objective function $f$ satisfies Assumptions \ref{ass:l-smooth} and \ref{as:lojasiewicz}.
Let $\gamma:=\alpha\beta$.
Then, for any $\epsilon_f > 0$, Algorithm \ref{alg:sgd} returns a point $x$ with $\Exp{f(x) - f^{\star}} \leq \epsilon_f$ after $N := K \cdot T $ iterations.
\\
i) When $\gamma=2$ ($\alpha=2$ and $\beta=1$), we have
\begin{align*}
   & N = \mathcal{O}(\epsilon_f^{-\frac{1}{1+\tau}}), \ \text{with}\ \eta_k=\Theta(k^{-1}).
\end{align*}

ii) When $\gamma<2$, we have
\begin{align*}
    & N = \mathcal{O}\big(\epsilon_f^{-\frac{4-\alpha}{\alpha(\tau+1)}}\big) \ \text{with}\ \eta_k=\Theta(k^{-\frac{\tau+1}{2-\alpha/2}+\tau})\ \text{if}\  \tau\leq \frac{\gamma}{4-\alpha-\gamma}, and \\
    & N = \mathcal{O}\big(\epsilon_f^{-\frac{4-\alpha-\gamma}{\alpha}}\big) \ \text{with}\ \eta_k=\Theta(k^{-\frac{2-\gamma}{4-\alpha-\gamma}})\ \text{if}\ \tau> \frac{\gamma}{4-\alpha-\gamma}.
\end{align*}

\begin{proof}
Using the result of Theorem \ref{lemma_man}, we need to specify the constants $\nu$ and $\zeta$. 
To do so, we first characterize the stationary point for the  special setting of this corollary. 
Equation \eqref{eq:statioanry} becomes 
\begin{align}\label{eq:eq:ap1}
    a\eta\cdot t^\beta+\frac{d\eta}{b}=\mu t^{2/\alpha}.
\end{align}

Let 
$\gamma:=\alpha\beta$ and define the following function
\begin{align*}
    H_{\eta}(t):=a\eta^2t^{\beta} - \mu\eta t^{\frac{2}{\alpha}}+\frac{d\eta^2}{b}.
\end{align*}
Next, we either find $r(\eta)$ exactly or bound it.
Depending on whether $\gamma$ is less than or equal to $2$, the analysis of $H_\eta(t)=0$ is different. We study each case separately. 
\\
\\
\textbf{I)} $\gamma=2$ (or $\beta=1$ and $\alpha=2$):\  
In this case, we can find $r(\eta)$ exactly and it is given by
\begin{align*}
    r(\eta)=\frac{\frac{d\eta}{b}}{\mu-a\eta}=\Theta\rb{\frac{\eta}{b}}=\Theta\big(k^{-\tau}\eta\big).
\end{align*}
Note that in the above expression, we used the fact that $b_k=\Theta(k^\tau)$.
Next is to find the parameters in Theorem \ref{lemma_man}. To do so, from Equation \ref{eq:condition1} with $h(t)=t$ and $\phi(t)=\sqrt{2\mu t}$, we have
\begin{align*}
     &1+a\eta^2_k h'\big(r(\eta_k)\big)-\eta_k\phi'\big(r(\eta_k)\big)\phi\big(r(\eta_k)\big)=1+a\big(\Theta(k^{-\zeta})\big)^2  -\mu\Theta(k^{-\zeta})=1-\omega_k k^{-1}.
\end{align*}
In order to have the above equality, we should have $\zeta=1$. 
Now, suppose that $\tau\geq0$, then $r(\eta)=\Theta(k^{-(1+\tau)})$ and based on Theorem \ref{lemma_man}, we obtain the convergence rate of $\mathcal{O}(k^{-(1+\tau)})$ for $\delta_k$.

\textbf{II) }$0\leq \gamma<2$:\ 
In this case, we find lower and upper bound for $r(\eta)$. 
To this end, consider the following point for some constant $S$, 
$$
t_0:=\rb{\frac{\frac{d\eta}{b}+S\frac{\eta}{b}}{\mu}}^{\frac{\alpha}{2}}=\Theta\rb{\rb{\frac{\eta}{b}}^{\frac{\alpha}{2}}}.
$$
For this point,  we have
\begin{align*}
    H_\eta(t_0)=a\frac{\eta^{2+\frac{\gamma}{2}}}{b^{\frac{\gamma}{2}}}\Big(\frac{d+S}{\mu}\Big)^{\frac{\gamma}{2}} - S\frac{\eta^2}{b}.
\end{align*}
For $S=0$,  $H_\eta(t_0)>0$. 
On the other hand, if $\eta=\Theta(k^{-\zeta})$ and $b_k=\Theta(k^{\tau})$, then for $(\tau+\zeta)\frac{\gamma}{2}\geq \tau$ and large enough $S$, we have $H_\eta(t_0)<0$.
This implies 
\begin{align*}
    r(\eta)=\Theta\rb{\rb{\frac{\eta}{b}}^{\frac{\alpha}{2}}}.
\end{align*}
Next is to check whether \eqref{eq:condition1} holds for $\eta_k=\Theta(k^{-\zeta})$, $b_k=\Theta(k^{\tau})$, $h(t)=t^\beta$, and $\phi(t)=\sqrt{2\mu}t^{2/\alpha}$, i.e., 
\begin{align*}
     &1+a\beta\big(\Theta(k^{-\zeta})\big)^2  \Big(\Theta( k^{-(\zeta+\tau)\alpha/2})\Big)^{\beta-1}-\frac{2\mu}{\alpha}\Theta(k^{-\zeta})\Big(\Theta( k^{-(\zeta+\tau)\alpha/2})\Big)^{2/\alpha-1}=1-\omega_k k^{-1}.
\end{align*}
The order of the first term is $\mathcal{O}(k^{-(2\zeta+(\zeta+\tau)\alpha(\beta-1)/2)})$ 
and the order of the second term is
$\mathcal{O}(k^{-(\zeta+(\zeta+\tau)\alpha(2/\alpha-1)/2)})$. 
In order for the above expression to hold, we should have 
\begin{align}\label{eq:app:main:1}
    \zeta+(\zeta+\tau)\alpha(2/\alpha-1)/2\leq 2\zeta+(\zeta+\tau)\alpha(\beta-1)/2,
\end{align}
and 
\begin{align}\label{eq:app:main:2}
    \zeta+(\zeta+\tau)\alpha(2/\alpha-1)/2\leq 1.
\end{align}
Inequality \eqref{eq:app:main:1} implies $\gamma\zeta\geq(2-\gamma)\tau$ and inequality \eqref{eq:app:main:2} leads to $\zeta<\frac{\tau+1}{2-\alpha/2}-\tau$.
See Figure \ref{fig:region1} for an example of the region $(\zeta,\tau)$ for which both \eqref{eq:app:main:1} and \eqref{eq:app:main:2} hold.
\begin{figure} 
    \centering
    \includegraphics[scale=.4]{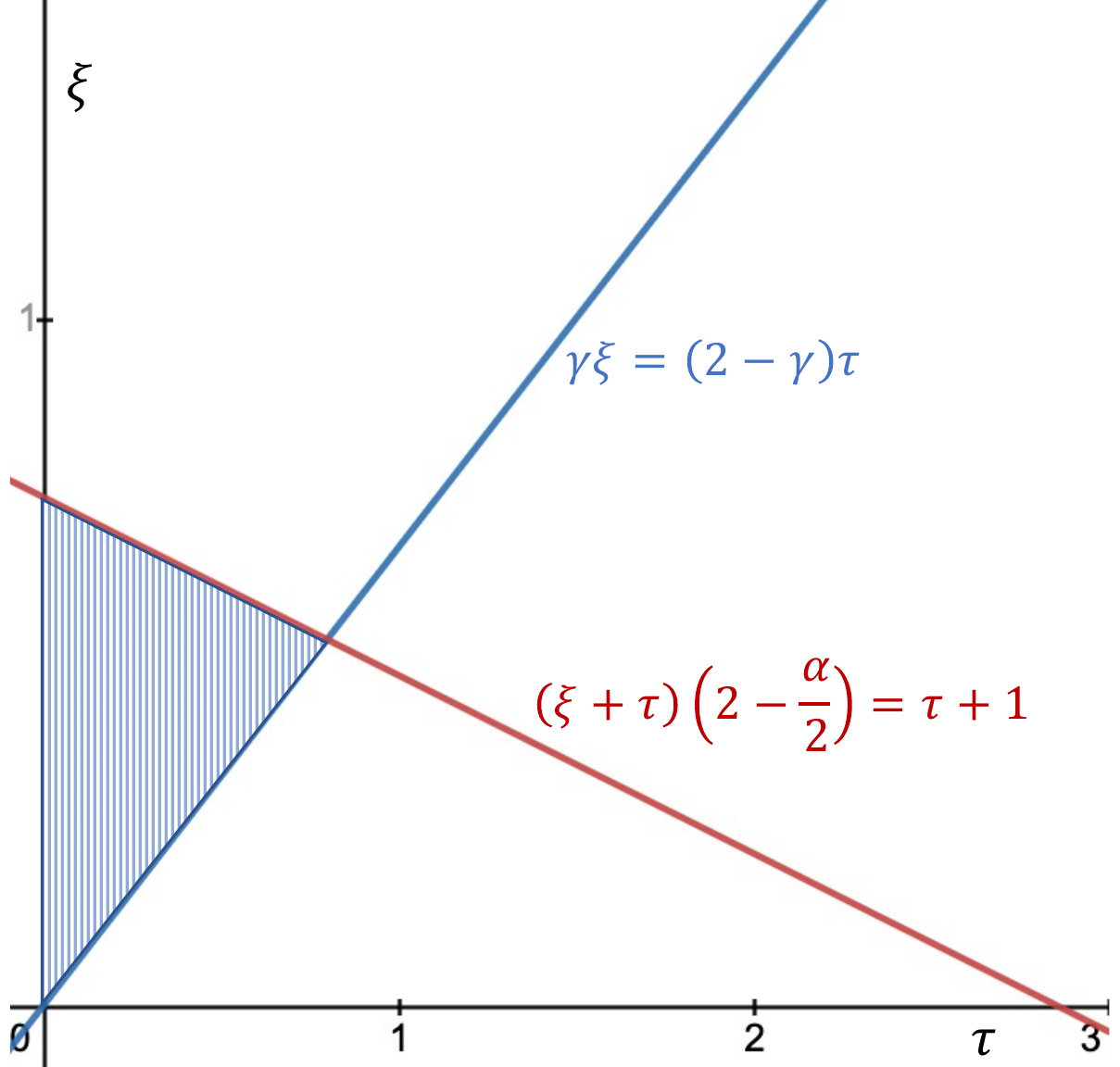}
    \caption{An illustration of the region $(\tau,\zeta)$ that ensures both \eqref{eq:app:main:1} and \eqref{eq:app:main:2} hold. This is the highlighted area.  Within this region, the maximum $\gamma$ is at the red line. In this figure $\gamma=1.2, \alpha=1.3$, and $\beta=1.2/1.3=0.92$.}
    \label{fig:region1}
\end{figure}
Putting everything together, we obtain
\begin{align*}
    &\text{If}\ \tau\leq \frac{\gamma}{4-\alpha-\gamma},\ \text{then}\  \delta_k=\mathcal{O}(k^{-\frac{\alpha(\tau+1)}{4-\alpha}}), \quad \text{with}\quad \eta_k=\Theta(k^{-\frac{\tau+1}{2-\alpha/2}+\tau}).
\end{align*}
Note that $\frac{\gamma}{4-\alpha-\gamma}$ is the intersection point of two lines.
For $\tau>\frac{\gamma}{4-\alpha-\gamma}$, the dynamic is equivalent to 
\begin{align}
         \delta_{t+1}\leq \delta_t+a\eta^2\cdot \delta_t^{\beta}-\eta\mu\delta_t^{\frac{2}{\alpha}},
\end{align}
with the stationary point $r(\eta)=\big(a\eta/\mu\big)^{\frac{\alpha}{2-\gamma}}$. 
Following the steps similar to the previous case, we get the following equation
\begin{align*}
     &1+a\beta\big(\Theta(k^{-\zeta})\big)^2  \Big(\Theta( k^{-\zeta\frac{\alpha}{2-\gamma}})\Big)^{\beta-1}-\frac{2\mu}{\alpha}\Theta(k^{-\zeta})\Big(\Theta( k^{-\zeta\frac{\alpha}{2-\gamma}})\Big)^{2/\alpha-1}=1-\omega_k k^{-1}.
\end{align*}
This leads to $\zeta=\frac{2-\gamma}{4-\alpha-\gamma}$ and subsequently to
\begin{align*}
    &\text{If}\ \tau> \frac{\gamma}{4-\alpha-\gamma},\ \text{then}\  \delta_k=\mathcal{O}(k^{-\frac{\alpha}{4-\alpha-\gamma}}), \quad \text{with}\quad \eta_k=\Theta(k^{-\frac{2-\gamma}{4-\alpha-\gamma}}).
\end{align*}


\end{proof}

\subsection{Proof of Proposition~\ref{lemma_mini-batchsize}}

\textbf{Proposition~\ref{lemma_mini-batchsize}.} 
Let the assumptions of Corollary \ref{corr:main1} hold, $b_k = \Theta\rb{k^{\tau}}$, $T = \Theta\rb{1}$. Then the expected total computational cost (sample complexity) of Algorithm \ref{alg:sgd} is
	 \begin{equation}
	 \text{cost} := T \cdot \sum_{k=0}^{K-1} b_k = \begin{cases} \cO\rb{ \epsilon_f^{-\fr{4-\al}{\al} }} \qquad & \text{for } 0 \leq \tau \leq \frac{\gamma}{4-\alpha-\gamma}, \notag \\
	 \cO\rb{\epsilon_f^{-\frac{\rb{4-\alpha - \gamma} \rb{\tau+1} }{\alpha }} }  \qquad & \text{for } \tau > \frac{\gamma}{4-\alpha-\gamma} . 
	 \end{cases} 
	\end{equation}

\begin{proof}
Corollary~\ref{corr:main1} says that Algorithm \ref{alg:sgd} finds the global $\epsilon_f$-stationary point after $N = K \cdot T$ number of iterations, where
 \begin{align*}
    & N = \mathcal{O}\big(\epsilon_f^{-\frac{4-\alpha}{\alpha(\tau+1)}}\big) \ \text{with}\ \eta_k=\Theta(k^{-\frac{\tau+1}{2-\alpha/2}+\tau})\ \text{if}\  \tau\leq \frac{\gamma}{4-\alpha-\gamma},\\
    & N = \mathcal{O}\big(\epsilon_f^{-\frac{4-\alpha-\gamma}{\alpha}}\big) \ \text{with}\ \eta_k=\Theta(k^{-\frac{2-\gamma}{4-\alpha-\gamma}})\ \text{if}\ \tau> \frac{\gamma}{4-\alpha-\gamma}.
\end{align*}
For $\tau \leq \frac{\gamma}{4-\alpha-\gamma}$, the expected total computational cost (sample complexity) is
$$
 \text{cost} :=  T \cdot \sum_{k=0}^{K-1} b_k = T \sum_{k=0}^{K-1} k^{\tau} =\cO(N^{\tau+1}) = \cO\rb{\epsilon_f^{-\frac{4-\alpha}{\alpha (\tau+1)}\cdot\rb{\tau+1}} } = \cO\rb{\epsilon_f^{-\frac{4-\alpha}{\alpha}}}.
$$
For $\tau > \frac{\gamma}{4-\alpha-\gamma}$, the iteration complexity does not improve and the sample complexity becomes worse when increasing $\tau$
$$
 \text{cost} =  T \cdot \sum_{k=0}^{K-1} b_k = T \sum_{k=0}^{K-1} k^{\tau} =\cO(N^{\tau+1}) = \cO\rb{\epsilon_f^{-\frac{\rb{4-\alpha - \gamma} \rb{\tau+1} }{\alpha }} } .
$$
\end{proof}




\subsection{Proof of Proposition~\ref{pro:lower}}

\textbf{Proposition~\ref{pro:lower}.} \
Consider the following recursion 
\begin{align*}
\delta_{k+1}= \delta_k+a\eta_k^2\cdot h\big(\delta_k\big)-\frac{\eta_k}{2}\phi^2(\delta_k)+ \fr{d\eta_k^2}{b_k},\quad \text{for all } k\geq 0,
\end{align*}
where $a \geq 0$, $d>0$, $h(t)=t^\beta$ with $\beta\in(0,1]$, $\phi(t)=\sqrt{2\mu} t^{1/\alpha}$ with $\alpha\in[1,2]$, and $b_k=\Theta(1)$. 
Then $\delta_k = {\Omega}(k^{-\frac{\alpha}{4-\alpha}})$ for any sequence of $\cb{\eta_k}_{k\geq 0}$. Moreover, this rate is achieved by the choice $\eta_k=\Theta(k^{-\frac{1}{2-\alpha/2}})$.

\begin{proof}

We begin with the fact that if $\delta_k$ defined in \eqref{eq:main_fixed} converges to zero with stepsizes $\{\eta_k\}$, then there exists a $K_0$ such that for all $k\geq K_0$, $\delta_k<1$. 
Hence, for $k\geq K_0$, we have $\delta_k\leq \delta_k^{\beta}$ for $\beta\in(0,1]$.
An immediate consequence of this fact is that for $h(t)=t^\beta$ and $\phi(t)=\sqrt{2\mu} t^{1/\alpha}$, the above dynamic can be bounded as follows

\begin{align}\label{eq:app:tightness2}
     \delta_k+a\eta^2_k \delta_k-\eta_k\mu\delta_k^{\frac{2}{\alpha}}+d\eta^2_k\leq \delta_k+a\eta^2_k \delta_k^\beta-\eta_k\mu\delta_k^{\frac{2}{\alpha}}+d\eta^2_k , \ \forall k\geq K_0.
\end{align}

Let us define two new dynamics as follows, i.e.,

\begin{align}\label{eq:neq:dynamic}
    &r_{k+1}:=r_k+a\eta^2_k r_k-\eta_k\mu r_k^{\frac{2}{\alpha}}+d\eta^2_k,\quad r_0:=\delta_0,\\
    &r_{k+1,\varepsilon}:=r_k\Big(1-a'\eta_k^{1+\frac{2-\alpha-\varepsilon}{2}}\Big)+d'\eta^2_k,\quad r_{0,\varepsilon}:=\delta_0,
\end{align}
First, we show that for any $0<\varepsilon<\frac{2-\alpha}{2}$, there exist $K, a', d'$, such that for all $k\geq K$, $r_{k+1,\varepsilon}\leq r_{k+1}$. 
To do so, we need to understand for what values of $z$, the following inequality holds.
\begin{align*}
  z\Big(1-a'\eta_k^{1+\frac{2-\alpha-\varepsilon}{2}}\Big)+d'\eta^2_k \leq z+a\eta^2_k z-\eta_k\mu z^{\frac{2}{\alpha}}+d\eta^2_k.
\end{align*}
This implies
\begin{align}\label{eq:app:tightness1}
  0 \leq \Big(a'\eta_k^{1+\frac{2-\alpha-\varepsilon}{2}}+a\eta^2_k\Big) z-\eta_k\mu z^{\frac{2}{\alpha}}+(d-d')\eta^2_k.
\end{align}
By choosing $d'=d$, the above inequality holds for 
\begin{align*}
    0\leq z\leq \left(\frac{a'\eta_k^{\frac{2-\alpha-\varepsilon}{2}}+a\eta_k}{\mu}\right)^{\frac{\alpha}{2}}.
\end{align*}
Since $a\geq0$, \eqref{eq:app:tightness1} also holds for 
\begin{align*}
     z\in\Big[0,\ \Big(\frac{a'\eta_k^{\frac{2-\alpha-\varepsilon}{2}}}{\mu}\Big)^{\frac{\alpha}{2}}\Big].
\end{align*}
Therefore, if $r_{k}$ is within the above interval, then $r_{k+1,\varepsilon}\leq r_{k+1}$. 
Using \eqref{eq:app:tightness2}, we know that $r_{k+1}\leq \delta_{k+1}$. 
On the other hand, based on the result of Theorem \ref{lemma_man}, we have $\delta_k=\mathcal{O}(\eta_k^{\frac{\alpha}{2}})$. 
Because of $\frac{2-\alpha-\varepsilon}{2}\leq1$ and the fact that there exists $K$ such that for all $k\geq K$, $\eta_k\leq1$, then $\delta_k$ will lay inside the above interval for large enough $k$.
This implies that there exists $K'$ such that for all $k\geq K'$,  $r_{k+1,\varepsilon}\leq r_{k+1}\leq \delta_{k+1}$.
Finally, using the result of Lemma \ref{lem_ge} with $\epsilon'=\frac{2-\alpha-\varepsilon}{2}$, we obtain the optimal convergence rate of $r_{k,\varepsilon}$ that is 
$$
\Theta\big(k^{-\frac{1-\epsilon'}{1+\epsilon'}}\big)=\Theta\big(k^{-\frac{\alpha-\varepsilon}{4-\alpha-\varepsilon}}\big).
$$
Comparing the above rate with the rate of $\delta_k$ presented in Corollary \ref{corr:main1}, i.e., $\mathcal{O}(k^{-\frac{\alpha}{4-\alpha}})$, concludes the result.  

\end{proof}

Next, we present a generalization of Theorem 3.2 in \cite{gower2019sgd} that helps us to establish our tightness result.
\begin{lemma}\label{lem_ge}
Consider the following recursive equation
\begin{align}\label{eq:lem_ge}
    r_{k+1}:= (1-a'\eta_k^{1+\epsilon'})r_k+c'\eta_k^2,\ k\geq 0,
\end{align}
where $\eta_k\leq\frac{1}{b'}$ for all $k$ and $a',c',\epsilon'\geq0$ with $a'\leq b'$. Then, choosing $s\geq2$ and 
\begin{align*}
    \eta_k:=\begin{cases} 
      \left(\frac{1}{b'}\right)^{\frac{1}{1+\epsilon'}}, & k<[\frac{K}{2}]\ or\ K\leq \frac{b'^{\frac{1-\epsilon'}{1+\epsilon'}}}{a'}, \\
      \left(\frac{2/(1+\epsilon')}{a'(s+k-[\frac{K}{2}])}\right)^{\frac{1}{1+\epsilon'}}, & \text{otherwise}, 
   \end{cases}
\end{align*}
will result in 
$r_K= \Theta \big(K^{-\frac{1-\epsilon'}{1+\epsilon'}}\big)$.
\end{lemma}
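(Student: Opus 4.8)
The plan is to split the iteration range into the two regimes that define the stepsize schedule, analyze each, and glue the estimates. Write $p := \tfrac{1}{1+\epsilon'}$ and $\theta := 2p = \tfrac{2}{1+\epsilon'}$, and note the key algebraic identity $\theta - 1 = \tfrac{2}{1+\epsilon'} - 1 = \tfrac{1-\epsilon'}{1+\epsilon'}$, so that the claimed rate $K^{-\frac{1-\epsilon'}{1+\epsilon'}}$ is exactly $K^{-(\theta-1)}$. This is a decaying rate only when $\theta > 1$, i.e. $\epsilon' < 1$, which is the regime of interest (in Proposition~\ref{pro:lower} one has $\epsilon' = \tfrac{2-\alpha-\varepsilon}{2} \le \tfrac12$), and I will use $\theta > 1$ crucially. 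I also treat $c' > 0$, which holds in the application (there $c' = d > 0$); for $c' = 0$ the recursion is purely homogeneous and only the upper bound survives.

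First I would handle the constant-stepsize phase $k < \lfloor K/2\rfloor$ (and the small-$K$ branch). There $\eta_k^{1+\epsilon'} = 1/b'$, so the recursion reads $r_{k+1} = (1 - a'/b')\,r_k + c'(1/b')^{2/(1+\epsilon')}$ with contraction factor $\rho := 1 - a'/b' \in [0,1)$ by $a' \le b'$. Summing the geometric series gives $r_{\lfloor K/2\rfloor} \le \rho^{\lfloor K/2\rfloor} r_0 + \tfrac{b'}{a'}\,c'(1/b')^{2/(1+\epsilon')}$, so $r_{\lfloor K/2\rfloor}$ is bounded above by a constant independent of $K$; its precise size only enters the constant $M$ below. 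Next I reparametrize the decreasing-stepsize phase by $j := k - \lfloor K/2\rfloor \ge 0$. Substituting $\eta_k = \big(\tfrac{2p}{a'(s+j)}\big)^{p}$ yields $a'\eta_k^{1+\epsilon'} = \tfrac{\theta}{s+j}$ and $c'\eta_k^2 = C(s+j)^{-\theta}$ with $C := c'\big(\tfrac{2p}{a'}\big)^{2p} > 0$, so the phase-two recursion collapses to the clean form $r_{j+1} = \big(1 - \tfrac{\theta}{s+j}\big) r_j + C(s+j)^{-\theta}$. Since $s \ge 2 \ge \theta$, the factor $1 - \theta/(s+j)$ is nonnegative and all $r_j \ge 0$.

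I then establish the two-sided estimate $r_j = \Theta\big((s+j)^{-(\theta-1)}\big)$ by induction on $j$. For the upper bound, assuming $r_j \le M(s+j)^{-(\theta-1)}$ gives $r_{j+1} \le \tfrac{M}{(s+j)^{\theta-1}} - \tfrac{\theta M - C}{(s+j)^{\theta}}$; using the mean-value bound $\tfrac{1}{(s+j)^{\theta-1}} - \tfrac{1}{(s+j+1)^{\theta-1}} \le (\theta-1)(s+j)^{-\theta}$ (valid precisely because $\theta - 1 > 0$), the step closes whenever $M(\theta-1) \le \theta M - C$, i.e. $M \ge C$; taking $M := \max\{C,\, r_{\lfloor K/2\rfloor}\, s^{\theta-1}\}$ also covers the base case. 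For the lower bound, assuming $r_j \ge m(s+j)^{-(\theta-1)}$ and keeping the additive term, the step reduces to $\tfrac{m}{(s+j)^{\theta-1}} - \tfrac{m}{(s+j+1)^{\theta-1}} \ge \tfrac{\theta m - C}{(s+j)^{\theta}}$; choosing $m \le C/\theta$ makes the right side $\le 0$ while the left side is $\ge 0$, so the step holds verbatim, and the base case holds once $r_{j_0} > 0$ for some $j_0$ (guaranteed after one step by $C > 0$). Evaluating at the terminal index $j = K - \lfloor K/2\rfloor = \Theta(K)$ gives $s + j = \Theta(K)$, hence $r_K = \Theta\big(K^{-(\theta-1)}\big) = \Theta\big(K^{-\frac{1-\epsilon'}{1+\epsilon'}}\big)$.

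The main obstacle is the lower bound: it is what upgrades the result to a genuine $\Theta$ and hence makes it usable for the tightness claim of Proposition~\ref{pro:lower}, and it is where positivity of the additive noise $c' > 0$ is indispensable, since the homogeneous part alone decays at the strictly faster rate $(s+j)^{-\theta}$. The trick that lets both inductions go through without fighting discrete correction terms is to push all slack into the constants — $M \ge C$ for the upper bound and $m \le C/\theta$ for the lower bound — so that the per-step inequalities become sign conditions that hold automatically once $\theta > 1$. The remaining work is bookkeeping: verifying nonnegativity of the multiplier (from $s \ge 2$), confirming the constant phase-one floor is harmless as an initial condition, and checking the degenerate endpoints $\epsilon' \to 1$ (where the rate flattens) and $\epsilon' = 0$ (recovering the classical $K^{-1}$ rate of \cite{gower2019sgd}).
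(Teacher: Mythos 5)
Your proof is correct, and it reaches the lemma's conclusion by a genuinely different route in the decreasing-stepsize phase. The phase splitting and the constant-stepsize analysis (geometric contraction of $r_{[\frac{K}{2}]}$ to a $K$-independent constant) coincide with the paper's. For the second phase, however, the paper multiplies the recursion by the weight $e_k=(s+k-1-[\frac{K}{2}])^{2/(1+\epsilon')}$ and telescopes: its upper bound rests on the Jensen/AM--GM inequality $\left(x-\frac{2}{1+\epsilon'}\right)^{1+\epsilon'}x^{1-\epsilon'}\le(x-1)^2$, which gives $e_k r_k\le e_{k-1}r_{k-1}+d_2$ and the rate after summation, while its lower bound uses the reverse inequality $\left(x-\frac{2}{1+\epsilon'}\right)^{1+\epsilon'}x^{1-\epsilon'}\ge(x-2)^2$ for $x\ge 2$, a second multiplication by $e_{k-1}$, and an integral comparison for $\sum_i e_i$. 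You instead run a direct two-sided induction on the ansatz $m(s+j)^{-(\theta-1)}\le r_j\le M(s+j)^{-(\theta-1)}$ with $\theta=\frac{2}{1+\epsilon'}$, reducing each inductive step to a sign condition on constants ($M\ge C$ for the upper bound via the mean-value estimate, $m\le C/\theta$ for the lower bound). This is more elementary, avoids the algebraic inequalities entirely, and makes explicit two hypotheses that the lemma's statement leaves implicit but that both proofs genuinely need: $\epsilon'<1$ (so that $\theta>1$, without which the mean-value step and the claimed decaying rate degenerate) and $c'>0$ (otherwise $r_k$ decays at the strictly faster homogeneous rate $(s+j)^{-\theta}$ and the $\Theta$ claim fails). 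What the paper's telescoping buys in exchange is that the rate emerges from the summation rather than having to be guessed as an induction hypothesis; the paper's proof also continues past the stated claim with an optimal-stepsize argument (via Lemma~\ref{lemma:app:3}) that supports the tightness discussion in Proposition~\ref{pro:lower}, but that portion is not part of the lemma's assertion, so your proof covers everything the statement claims.
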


\begin{proof}
For $k\leq[\frac{K}{2}]$, we obtain
\begin{align*}
    &r_{k}\leq \left(1-\frac{a'}{b'}\right)^k r_0+\frac{c}{b^{\frac{2}{1+\epsilon'}}}\sum_{t=0}^{k-1}(1-\frac{a'}{b'})^t\leq \left(1-\frac{a'}{b'}\right)^k r_0+d_1, 
\end{align*}
where $d_1:=\frac{c'}{a'b'^{\frac{1-\epsilon'}{1+\epsilon'}}}$.
Note that if $K\leq \frac{b'^{\frac{1-\epsilon'}{1+\epsilon'}}}{a'}$, then 
\begin{align*}
    r_{K}\leq \left(1-\frac{a'}{b'}\right)^{K} r_0+\frac{c'}{a'^2K},
\end{align*}
But for $K> \frac{b'^{\frac{1-\epsilon'}{1+\epsilon'}}}{a'}$ and $k=[K/2]$, we have
\begin{align*}
    r_{[\frac{K}{2}]}\leq \left(1-\frac{a'}{b'}\right)^{[\frac{K}{2}]} r_0+d_1,
\end{align*}
Then for $k\geq1+[\frac{K}{2}]$, we have
\begin{align*}
    &  r_k\leq\left(1-\frac{2/(1+\epsilon')}{s+k-1-[\frac{K}{2}]}\right) r_{k-1}+c'\left(\frac{2/(1+\epsilon')}{a'(s+k-1-[\frac{K}{2}])}\right)^{\frac{2}{1+\epsilon'}}
\end{align*}
Multiplying both sides by $e_k:=(s+k-1-[\frac{K}{2}])^{\frac{2}{1+\epsilon'}}$ results in 
\begin{align}\notag
     e_kr_{k}&\leq \left(s+k-\frac{3+\epsilon'}{1+\epsilon'}-[\frac{K}{2}]\right)\left(s+k-1-[\frac{K}{2}]\right)^{\frac{1-\epsilon'}{1+\epsilon'}}r_{k-1}  +c\left(\frac{2}{a'(1+\epsilon')}\right)^{\frac{2}{1+\epsilon'}}\\ \label{eq:lower-tele}
    &\leq e_{k-1}r_{k-1}+d_2,
\end{align}
where $d_2:=c'\left(\frac{2}{a'(1+\epsilon')}\right)^{\frac{2}{1+\epsilon'}}$. 
The last inequality is due to the Jensen's inequality and the fact that $\log(x)$ is concave, hence, 
\begin{align*}
    \left(x-\frac{2}{1+\epsilon}\right)^{1+\epsilon}x^{1-\epsilon}\leq (x-1)^2.
\end{align*}
Summing up \eqref{eq:lower-tele} from $k=[K/2]+1$ to $k=K$ gives us
\begin{align*}
    e_K r_{K}\leq e_{[K/2]}r_{[K/2]}+d_2(K-[K/2]).
\end{align*}
Consequently, 
\begin{align*}
     &r_{K}\leq \frac{e_{[K/2]}}{e_K}r_{[K/2]}+d_2\frac{(K-[K/2])}{e_K}= \frac{(s-1)^{\frac{2}{1+\epsilon'}}}{e_K}r_{[K/2]}+d_2\frac{(K-[K/2])}{e_K}\\
     &\leq \frac{(s-1)^{\frac{2}{1+\epsilon'}}}{e_K}\left(\left(1-\frac{a'}{b'}\right)^{[\frac{K}{2}]} r_0+d_1\right)+d_2\frac{(K-[K/2])}{e_K}.
\end{align*}
On the other hand, we have $e_K\geq (K-[K/2])^{\frac{2}{1+\epsilon}}\geq (K/2)^{\frac{2}{1+\epsilon}}$, which leads to the following upper bound for $r_K$
\begin{align*}
     &r_{K}\leq \frac{(s-1)^{\frac{2}{1+\epsilon'}}}{(K-[K/2])^{\frac{2}{1+\epsilon'}}}\left(\left(1-\frac{a'}{b'}\right)^{[\frac{K}{2}]} r_0+d_1\right)+\frac{d_2}{(K-[K/2])^{\frac{1-\epsilon'}{1+\epsilon'}}}\\ &\leq\frac{(s-1)^{\frac{2}{1+\epsilon'}}}{(K/2)^{\frac{2}{1+\epsilon'}}}\left(\left(1-\frac{a'}{b'}\right)^{[\frac{K}{2}]} r_0+d_1\right)+\frac{d_2}{(K/2)^{\frac{1-\epsilon'}{1+\epsilon'}}}.
\end{align*}

For the lower bound, we use the following inequality
\begin{align*}
    \left(x-\frac{2}{1+\epsilon}\right)^{1+\epsilon}x^{1-\epsilon}\geq (x-2)^2, \quad \forall x\geq2.
\end{align*}
This implies 
\begin{align*}
    e_kr_k\geq e_{k-2}r_{k-1}+d_2.
\end{align*}
Multiplying the above by $e_{k-1}$, we get
\begin{align*}
    e_{k-1}e_kr_k\geq e_{k-1}e_{k-2}r_{k-1}+d_2e_{k-1}.
\end{align*}
Summing up the above expression from $k=[K/2]+1$ to $k=K$ gives us 
\begin{align*}
    e_{K-1}e_Kr_K\geq e_{[K/2]}e_{[K/2]-1}r_{[K/2]}+d_2\Big(e_{[K/2]}+...+e_{K}\Big).
\end{align*}
Using $\sum_{i=s-1}^{s+K}i^{\frac{2}{1+\epsilon'}}\geq\int_{s-1}^{s+K}x^{\frac{2}{1+\epsilon'}}dx$, we obtain 
\begin{align*}
    r_K=\Omega\Big(\frac{K^{1+\frac{2}{1+\epsilon'}}}{K^{\frac{2}{1+\epsilon'}}K^{\frac{2}{1+\epsilon'}}}\Big)=\Omega(K^{\frac{1-\epsilon'}{1+\epsilon'}}).
\end{align*}
To show that no other designs of stepsizes can achieve better rate, we show that even with the optimal stepsizes, the rate will be the same as above. 
Note that the dynamic in \eqref{eq:lem_ge} is a nonlinear function of the stepsize $\eta_k$ that has a global minimum which can be obtained by taking a derivative of  \eqref{eq:lem_ge} with respect to $\eta_k$. This optimal stepsize is given by 
\begin{align}\label{eq:app:unreal}
    \eta_k=\left(\frac{a'(1+\epsilon) r_k}{2c'}\right)^{1/(1-\epsilon)}.
\end{align}
 Using this stepsizes will lead to the following dynamic
 \begin{align}\label{eq:lem_gem}
     r_{k+1}=r_k(1-Ar_k^{\frac{2}{1-\epsilon}-1}),
 \end{align}
 where $A:=c'(\frac{1-\epsilon}{1+\epsilon})(\frac{a'(1+\epsilon)}{2c'})^{\frac{2}{1-\epsilon}}$. 
 Given the result of Lemma \ref{lemma:app:3}, the convergence rate of this dynamic is $\mathcal{O}(k^{-\frac{1-\epsilon}{1+\epsilon}})$.
See Figure \ref{fig:lower} for an illustration of an example that shows both the simulated $r_k$ in \eqref{eq:lem_gem} and its corresponding optimal rate. Different colours show different $\epsilon$.
\begin{figure}
    \centering
    \includegraphics[scale=.4]{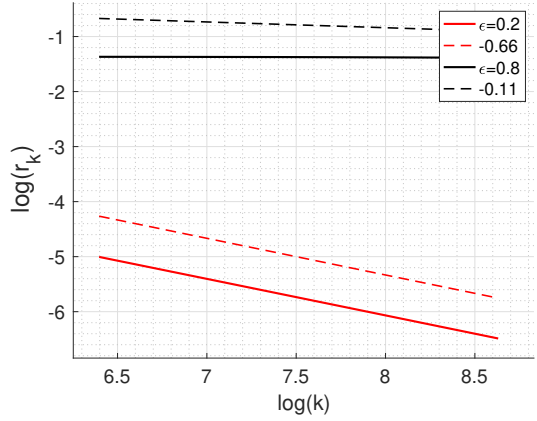}
    \caption{An example to verify equation \eqref{eq:lem_gem} for $\epsilon\in\{0.2,0.8\}$. Solid and dashed lines denote the simulated dynamic in Lemma \ref{lem_ge} and its corresponding theoretical rates, i.e.,  $\mathcal{O}(k^{-\frac{1-\epsilon}{1+\epsilon}})$, respectively. Numbers assigned to dashed lines indicate the slope of those lines.   
    }
    \label{fig:lower}
\end{figure}
 
\end{proof}

\section{Proofs for Section~\ref{sec:variace_reduction} and Additional Discussion}\label{sec:variace_reduction_appendix}

This Section is organized as follows. First, we elaborate on the intuition why one needs to resort to variance reduction techniques in order to improve over \algname{SGD} analysis provided in Section~\ref{sec:main}. Then we highlight the key challenges associated with the analysis of variance reduced methods under global K{\L} condition and introduce a new variance reduced method \algname{PAGER}. We explain the intuition why \algname{PAGER} overcomes the aforementioned challenges and improves over \algname{SGD} in online case \eqref{eq:problem_online}, and over \algname{SGD} and \algname{GD} in finite sum \eqref{eq:problem_finite_sum} case. Finally, we provide convergence guarantees for each setting in Theorems~\ref{thm:PAGE_online_w_restarts1_detailed} and \ref{thm:PAGE_w_restarts_finite_sum_detailed}.\footnote{Note that Theorems~\ref{thm:PAGE_online_w_restarts1_detailed} and \ref{thm:PAGE_w_restarts_finite_sum_detailed} are detailed versions of Theorems~\ref{thm:PAGE_online_w_restarts1} and \ref{thm:PAGE_w_restarts_finite_sum} provided in Section~\ref{sec:variace_reduction}.}

\paragraph{Why \algname{SGD} is not enough?} Notice that the analysis in Section~\ref{sec:main}, in particular, implies that if we want to solve problem \eqref{eq:problem_online} using \algname{SGD} with constant step-size $\eta$ and a mini-batch with replacement gradient estimator of size $b$, we immediately obtain a recursion
\begin{equation}\label{eq:simple_recursion_appendix_1}
    \delta_{t+1}-\delta_t \leq -\eta\mu \delta_t^{\fr{2}{\al}} + \fr{\eta^2 L \sigma^2}{2 b}.
\end{equation}
It is easy to see that if $\eta$ is fixed, then the last (variance) term in the above recursion can be only controlled by selecting large enough $b$.\footnote{The results of Corollary~\ref{corr:main1} and Lemma~\ref{lemma_mini-batchsize} implies that changing $\eta$ and $b$ with iterations does not help. } Assume that we want to solve our problem to $ \epsilon_f$ accuracy ($\delta_T \leq \epsilon_f$). Then to balance the two terms on the RHS, one needs to take $b \sim \epsilon_f^{-\fr{2}{\al}}$.   This choice simplifies the recursion to $\delta_{t+1}-\delta_t \leq - \fr{\eta\mu}{2} \delta_t^{\fr{2}{\al}}$. Applying Lemma~\ref{lemma:app:3} with $c = \fr{2-\al}{\al}$, we conclude that one needs $T \sim \epsilon_f^{\fr{-(2-\al)}{\al}}$ iterations to reach $\delta_T \leq \epsilon_f$. Thus, the total sample complexity is $b \cdot T \sim \epsilon_f^{\fr{-(4-\al)}{\al}}$. This observation implies that we need to construct a more sophisticated gradient estimator than mini-batch estimator in order to improve the sample complexity of \algname{SGD}.

\paragraph{Variance reduction and challenges under K{\L} condition.}
One common technique to design faster algorithms in stochastic optimization is to reduce variance of the gradient estimator using a control variate. It turns out that using such variance reduction techniques one can often design a gradient estimator at a much lower cost, while maintaining the same iteration complexity. Let us turn our attention to one popular variance reduction mechanism called \algname{PAGE}. The main steps of \algname{PAGE} method is described in Section~\ref{sec:variace_reduction}, the detailed pseudo-code is presented in Algorithm~\ref{alg:PAGE}. This method was originally proposed and analyzed for general non-convex and $2$-P{\L} objectives \cite{PAGE}. However, its application to $\al$-P{\L} functions with $\al \in [1,2)$ remains elusive. If we try to apply the standard analysis of \algname{PAGE}, it will become apparent that we face several challenges. In particular, Lemma~\ref{le:PAGE_online} along with Lemma~\ref{le:useful_fact_1} provides the following inequality for the iterates of the Algorithm~\ref{alg:PAGE} 

\begin{eqnarray}\label{eq:simple_recursion_appendix_2}
		\Psi_{t+1} - \Psi_t &\leq&  -  \eta \mu \Psi_t^{\fr{2}{\al}} -   \fr{p_t \lambda_t}{2}  G_t \rb{1 - \fr{4 \eta \mu }{p_t \al } \Psi_t^{\fr{2-\al}{\al}}} + \fr{p_t \lambda_t}{2} \fr{\sigma^2}{b_t} ,
\end{eqnarray}
where $\Psi_t = \delta_t + \lambda G_t$ is a candidate for a Lyapunov function and $G_t$ is the variance of the gradient estimator, and $\lambda > 0$. To illustrate one key obstacle in the analysis of \algname{PAGE} in online setting, let us set $G_t = 0$ for simplicity
\begin{equation}\label{eq:simple_recursion_appendix_3}
\Psi_{t+1} - \Psi_t \leq  -  \eta \mu \Psi_t^{\fr{2}{\al}}   +  \fr{p \lambda \sigma^2}{ 2 b}.
\end{equation}
Now, this recursion is very similar to \eqref{eq:simple_recursion_appendix_1}.  Therefore, the same argument applies here. In particular, one can argue that given constant parameters $\eta$, $b^{\prime}$ and $p$, we need to take $b \sim \epsilon_f^{-\fr{2}{\al}}$. Thus the total sample complexity is again no better than $b \cdot T \sim \epsilon_f^{\fr{-(4-\al)}{\al}}$. Note that the assumption $G_t = 0$ was only made to illustrate one difficulty. Rigorously proving the fact that the term including $G_t$ is small constitutes another challenge. 
\paragraph{Faster rates via \algname{PAGER} in online case.} However, we notice that in \eqref{eq:simple_recursion_appendix_2}, we have one more degree of freedom -- the parameter $p$, which can be selected small enough to ensure smaller per iteration cost of the method. This intuition brings us to \algname{PAGER} (Algorithm~\ref{alg:PAGE_w_restarts}), a new modification of \algname{PAGE} method with varying parameter $p$. \footnote{Note that originally \algname{PAGE} was only analyzed with constant parameter $p$, the extension to an arbitrarily changing $p_t$ is not trivial. } We carefully select the sequences $\cb{p_k}_{k\geq 0}$, $\cb{b_k}_{k\geq 0}$, $\cb{b_k^{\prime}}_{k\geq 0}$ for \algname{PAGER} in order to obtain a small per iteration cost of order $p_k b_k + b_k^{\prime} \sim \epsilon_f^{-1}$. This leads to a much faster convergence with $\epsilon_f^{\nfr{-2}{\al}}$ sample complexity. 
\paragraph{Difficulties in finite sum case and a fix via \algname{PAGER} framework.}
Let us now consider a finite sum problem \eqref{eq:problem_finite_sum} and directly apply Algorithm~\ref{alg:PAGE} with (constant) parameters $\eta$, $p$, $b$, $b^{\prime}$. Then we arrive at the following recursion
\begin{eqnarray}
\Psi_{t+1} - \Psi_t &\leq&  -  \eta \mu \rb{\Psi_t - \lambda G_t}^{\fr{2}{\al}} -   \fr{p \lambda}{2}  G_t   \notag \\
&\leq&  -  \eta \mu \Psi_t^{\fr{2}{\al}} -   \fr{p \lambda}{2}  G_t \rb{1 - \fr{4 \eta \mu (n+1)}{\al} \Psi_t^{\fr{2-\al}{\al}}}, \notag 
\end{eqnarray}
where we applied Lemma~\ref{le:PAGE_online}, \ref{le:useful_fact_1} and selected optimal parameters $p = \fr{1}{n+1}$, $b = n$, $b^{\prime} = 1$. By choosing a small enough stepsize $\eta$, we can unroll the above recursion and obtain the sample complexity $\cO\big(\rb{n \delta_0 + \sqrt{n} \kappa} \rb{\fr{1 + \delta_0}{\epsilon_f}}^{\fr{2-\al}{\al}} \big)$, where $\delta_0 = f(x_0) - f^{\star}$, $\kappa = \nfr{\cL}{\mu}$. However, this complexity is clearly not what one should hope for when analyzing a variance reduction scheme for problem \eqref{eq:problem_finite_sum}. Notably, this complexity can be even worse than the one of standard \algname{GD}, which is $\cO\big( n \kappa  \rb{\fr{1 + \delta_0}{\epsilon_f}}^{\fr{2-\al}{\al}} \big)$, for instance, when $\delta_0 > \kappa$. The main reason for this slowdown is that in the analysis of Algorithm~\ref{alg:PAGE} with constant parameters, we are forced to take small step-sizes of order $\eta = \cO\big( \fr{1}{n\delta_0} \big)$ to ensure progress. Luckily, thanks to a flexible choice of parameters in \algname{PAGER}, we can overcome this difficulty and provide improved convergence guaranties. Specifically, the framework of Algorithm~\ref{alg:PAGE_w_restarts} allows us to select an increasing sequence of step-sizes until it reaches the value $\eta = \cO\big( \fr{1}{\sqrt{n} \cL} \big)$.

\begin{algorithm}[h]
	\caption{\algname{PAGE}}\label{alg:PAGE}
	\begin{algorithmic}[1]
		\STATE Initialization: ${x}_0, {g}_0 \in \R^d$, step-size $\eta$ , number of iterations $T$, probability $p$, batch-sizes $b$, $b^{\prime} $
		\FOR{$t=0, \ldots, T -  1$}
    		\STATE $\xtpo = \xt - \eta g_t$
    		\STATE Sample $\chi \sim \text{Bernoulli}(p)$ 
    		\IF{$\chi=1$}
    		    \STATE $g_{t+1} = \frac{1}{b}\sum_{i=1}^{b} \nabla f_{\xi_{t+1}^i}(x_{t+1})$ 
    		\ELSE
    			\STATE $g_{t+1} = g_{t} +  \frac{1}{b^{\prime}}\sum_{i=1}^{b^{\prime}} \nabla f_{\xi_{t+1}^i}(x_{t+1})  - \frac{1}{b^{\prime}}\sum_{i=1}^{b^{\prime}} \nabla f_{\xi_{t+1}^i}(x_{t})$ 
        	\ENDIF
		\ENDFOR
		\STATE \textbf{Return: ${x}_T$} 
	\end{algorithmic}
\end{algorithm}

\subsection{Proof of Theorem~\ref{thm:PAGE_online_w_restarts1}}

Now we state and prove a detailed version of Theorem~\ref{thm:PAGE_online_w_restarts1}.

\begin{theorem}\label{thm:PAGE_online_w_restarts1_detailed} 
	Let $f(\cdot)$ have the form \eqref{eq:problem_online} and satisfy Assumptions~\ref{ass:l-smooth}, \ref{as:lojasiewicz} (with $\al \in [1, 2)$), \ref{as:UBV} and \ref{as:avg_smoothness_page}, let the sequences in Algorithm~\ref{alg:PAGE_w_restarts} be chosen as 
	\begin{align*}
	    &b_k^{\prime} =  \fr{\al}{8\eta \mu} \rb{ \fr{2^k}{\bar{\Psi}_0}}^{\fr{2-\al}{\al}} , \quad p_k = \fr{1}{1 + b_k^{\prime}}, \\
	    &b_k = \rb{\fr{2 \cdot 2^{\fr{2-\al}{\al}} \cdot 2^k}{\bar{\Psi}_0}}^{\fr{2}{\al}} \fr{\sigma^2}{ 4 \mu \eta^2 \cL^2} , \\
	    &T_k = \fr{2}{\eta \mu} \rb{ 2 \cdot 2^{\fr{2-\al}{\al}} \rb{ \fr{ 2^{k} U}{\bar{\Psi}_0} + 2 \rb{ \fr{\eta \mu}{2} }^{\fr{\al}{2-\al}} }}^{\fr{2-\al}{\al}} ,\\
	    &\eta_k  = \eta  = \fr{1}{\mu} \min\cb{ \fr{1}{2 \kappa} ,  \fr{\al }{8 }  },
	\end{align*}
	where $\bar{\Psi}_0 \eqdef  f(\bar{x}_0)  - f(\xstar) + \lambda_0 \sqnorm{\bar{g}_0 - \nabla f(\bar{x}_0) } $, $ \lambda_0 \eqdef \fr{ b^{\prime}_0 } { 4\eta_0 (1-p_0) \cL^2} $. Then, for any $\epsilon_f > 0$ Algorithm \ref{alg:PAGE_w_restarts} returns a point $\bar{x}_K$ with $\Exp{f(\bar{x}_K) - f^{\star}} \leq \epsilon_f$ after 
$N := \sum_{k=0}^{K-1} T_k =  \cO  \big(\epsilon_f^{-\fr{2-\al}{\al} }\big) $ iterations. The expected total computational cost (sample complexity) is
	 \begin{align*}
	 \text{cost} :=  \sum_{k=0}^{K-1} T_k \rb{p_k b_k + 2 (1-p_k) b_k^{\prime } } = \cO\rb{ \epsilon_f^{-\fr{2}{\al} }} . 
	\end{align*}
\end{theorem}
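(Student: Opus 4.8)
The plan is to run a Lyapunov-function argument stage by stage, tracking the potential $\Psi_t \eqdef \delta_t + \lambda_k G_t$, where $\delta_t \eqdef \Exp{f(x_t) - f^\star}$, $G_t \eqdef \Exp{\sqnorm{g_t - \nabla f(x_t)}}$ is the variance of the \algname{PAGE} estimator inside stage $k$, and $\lambda_k \eqdef \fr{b_k^\prime}{4\eta(1-p_k)\cL^2} = \fr{1+b_k^\prime}{4\eta\cL^2}$ is the stage-dependent weight appearing in the theorem. The goal is to show that the geometric sequence $\bar\Psi_k \eqdef \bar\Psi_0 2^{-k}$ dominates the potential at the start of each stage, i.e. $\Psi_0^{(k)} \le \bar\Psi_k$; once this is established, taking $K \eqdef \lceil \log_2(\bar\Psi_0/\epsilon_f)\rceil$ gives $\Exp{f(\bar x_K) - f^\star} \le \Psi_0^{(K)} \le \bar\Psi_K \le \epsilon_f$. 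I would prove $\Psi_0^{(k)} \le \bar\Psi_k$ by induction on $k$, analyzing one stage at a time.

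For the per-stage step I would start from the one-step \algname{PAGE} inequality supplied by Lemma~\ref{le:PAGE_online} together with Lemma~\ref{le:useful_fact_1},
\begin{equation*}
\Psi_{t+1} - \Psi_t \le - \eta\mu \Psi_t^{\fr{2}{\al}} - \fr{p_k \lambda_k}{2} G_t \rb{ 1 - \fr{4\eta\mu}{p_k \al}\Psi_t^{\fr{2-\al}{\al}}} + \fr{p_k \lambda_k}{2}\fr{\sigma^2}{b_k}.
\end{equation*}
The first move is to discard the middle term by showing its bracket is nonnegative throughout the stage, i.e. $\Psi_t \le \rb{\fr{p_k\al}{4\eta\mu}}^{\fr{\al}{2-\al}}$; since the potential is nonincreasing once descent holds, it suffices to check this at $t=0$, where substituting $p_k = \nfr{1}{(1+b_k^\prime)}$ and the prescribed $b_k^\prime$ makes the right-hand threshold equal $2^{\fr{\al}{2-\al}}\bar\Psi_k \ge \bar\Psi_k \ge \Psi_0^{(k)}$. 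Dropping the (now nonpositive) middle term and using the identity $p_k\lambda_k = \nfr{1}{(4\eta\cL^2)}$, which makes the variance term constant, I obtain the clean recursion $\Psi_{t+1} \le \Psi_t - \eta\mu\Psi_t^{\fr{2}{\al}} + \fr{\sigma^2}{8\eta\cL^2 b_k}$. The prescribed $b_k$ ensures $\fr{\sigma^2}{8\eta\cL^2 b_k} \le \fr{\eta\mu}{2}\rb{\bar\Psi_k / (2\cdot 2^{\fr{2-\al}{\al}})}^{\fr{2}{\al}}$, so as long as $\Psi_t$ exceeds the within-stage target $\bar\Psi_k/(2\cdot 2^{\fr{2-\al}{\al}})$ the variance floor is dominated and $\Psi_{t+1} \le \Psi_t - \fr{\eta\mu}{2}\Psi_t^{\fr{2}{\al}}$. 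Applying Lemma~\ref{lemma:app:3} with exponent $c = \nfr{(2-\al)}{\al}$ and constant $\nfr{\eta\mu}{2}$ then yields exactly the prescribed $T_k$ as the number of inner iterations needed to drive $\Psi$ from $\bar\Psi_k$ down to this target.

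The remaining ingredient is the restart bookkeeping, which is where the constants $2\cdot 2^{\fr{2-\al}{\al}} = 2^{\fr{2}{\al}}$ originate. At the end of stage $k$ the pair $(\bar x_{k+1}, \bar g_{k+1}) = (x_{T_k}, g_{T_k})$ is carried into stage $k+1$, but the potential is re-weighted from $\lambda_k$ to $\lambda_{k+1}$. Since $\delta$ is unchanged and $\lambda_{k+1}\ge\lambda_k$, I would bound $\Psi_0^{(k+1)} = \delta_{T_k} + \lambda_{k+1}G_{T_k} \le \fr{\lambda_{k+1}}{\lambda_k}\Psi_{T_k}^{(k)}$, and the schedule gives $\fr{\lambda_{k+1}}{\lambda_k} = \fr{1+b_{k+1}^\prime}{1+b_k^\prime} \le 2^{\fr{2-\al}{\al}}$. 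Hence $\Psi_0^{(k+1)} \le 2^{\fr{2-\al}{\al}} \cdot \bar\Psi_k/(2\cdot 2^{\fr{2-\al}{\al}}) = \bar\Psi_k/2 = \bar\Psi_{k+1}$, closing the induction; the extra factor $2^{\fr{2-\al}{\al}}$ deliberately built into the within-stage target is precisely what absorbs the re-weighting so that the net decrease is a clean halving.

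It then remains to sum costs. The iteration count is $N = \sum_{k=0}^{K-1} T_k$, and since $T_k = \Theta\rb{(2^k)^{\fr{2-\al}{\al}}}$ is geometric, the sum is dominated by its last term $\Theta\rb{(2^K)^{\fr{2-\al}{\al}}} = \cO\rb{\epsilon_f^{-\fr{2-\al}{\al}}}$. For the sample complexity, the per-iteration cost at stage $k$ is $p_k b_k + 2(1-p_k)b_k^\prime$; using $p_k b_k \approx b_k/b_k^\prime = \Theta(2^k)$, which dominates $b_k^\prime = \Theta((2^k)^{\fr{2-\al}{\al}})$ because $\nfr{(2-\al)}{\al}\le 1$ for $\al\in[1,2)$, the stage cost is $T_k \cdot \Theta(2^k) = \Theta\rb{(2^k)^{\fr{2}{\al}}}$, whose geometric sum is $\cO\rb{\epsilon_f^{-\fr{2}{\al}}}$. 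I expect the main obstacle to be the interlocking calibration of the four schedules $(p_k, b_k^\prime, b_k, T_k)$ against a \emph{constant} step-size $\eta$: one must simultaneously keep the \algname{PAGE} cross-term nonnegative (which pins $p_k$ and $b_k^\prime$ to the $\al$-P{\L} geometry), push the variance floor below the per-stage target (which pins $b_k$), and absorb the $\lambda_{k+1}/\lambda_k$ restart blow-up into the geometric target, all while meeting the step-size conditions under which Lemma~\ref{le:PAGE_online} is valid. Establishing that one-step inequality with $G_t$ inside the Lyapunov function, via the average $\cL$-smoothness assumption, is the technically heaviest piece.
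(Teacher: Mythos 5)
Your proposal is correct and follows essentially the same route as the paper's proof: the same Lyapunov function $\Psi_t = \delta_t + \lambda_k G_t$, the same nested induction (inner-loop monotonicity plus outer-loop halving $\bar\Psi_k \le \bar\Psi_0 2^{-k}$), the same three calibration steps (bracket nonnegativity via $p_k, b_k'$; variance floor via $b_k$ pushed below the within-stage target; Lemma~\ref{lemma:app:3} with $c = \nfr{(2-\al)}{\al}$ giving $T_k$), the same absorption of the restart re-weighting $\lambda_{k+1}/\lambda_k \le 2^{\fr{2-\al}{\al}}$ into the target, and the same geometric-sum cost accounting.
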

\begin{proof}
Combining the result of Lemma~\ref{le:PAGE_online} and Lemma~\ref{le:useful_fact_1} with $a = \fr{2}{\al}$, $x = \fr{\lambda G_t}{\Psi_t} \leq 1$, we obtain the following recursion
    \begin{eqnarray}\label{eq:rec_PAGE_online}
		\Psi_{t+1} - \Psi_t &\leq&  -  \eta \mu \Psi_t^{\fr{2}{\al}} -   \fr{p_k \lambda_k}{2}  G_t \rb{1 - \fr{4 \eta \mu }{p_k \al } \Psi_t^{\fr{2-\al}{\al}}} + \fr{p_k \lambda_k}{2} \fr{\sigma^2}{b_k} ,
	\end{eqnarray}
	where ${\Psi}_t \eqdef \delta_t  + \lambda_k G_t $, $G_t \eqdef  \Exp{\fr{1}{2}\sqnorm{g_t - \nfxt}}$, $\delta_t \eqdef \Exp{f(\xt) - f(\xstar)}$,   $ \lambda_k \eqdef \fr{ b^{\prime}_k } { 4\eta_k (1-p_k) \cL^2} $.
	
	Define the sequence $\cb{\bar{\Psi}_k}_{k\geq 0}$ as $\bar{\Psi}_k \eqdef \Exp{ f(\bar{x}_k)  - f(\xstar) + \lambda_k \sqnorm{\bar{g}_k - \nabla f(\bar{x}_k) } }$, which corresponds to the outer loop of the Algorithm~\ref{alg:PAGE_w_restarts}. For each $k = 0,\ldots,K-1$, the inner loop of Algorithm~\ref{alg:PAGE_w_restarts} starts with $x_0$ such that $\Psi_0 \eqdef \bar{\Psi}_k $.
	Let us prove by induction that within the outer loop $\bar{\Psi}_{k} \leq \fr{\bar{\Psi}_{0}}{2^k}$ for $k = 0,\ldots, K-1$ and, for each $k=0,\ldots,K-1 $, within the inner loop we have $\Psi_{t+1} \leq  \Psi_t$ for $t = 0,\ldots T_k - 1$ (unless we reached the desired accuracy $\Psi_t \leq \fr{\bar{\Psi}_k}{ 2 \cdot 2^{\fr{2-\al}{\al}} }  $ within the inner loop). The induction base for the outer loop and $k = 0$ is trivial. The induction base for the inner loop and $t=0$ is verified by the assumption on the step-size and the choice of batch-sizes when $k=0$. Fix $ k = 0,\ldots,K-1$ and $t = 0,\ldots,T_k - 1$ and assume that we have $\Psi_{t} \leq  \Psi_{t-1} \leq \Psi_{0} = \bar{\Psi}_{k}$ and $\bar{\Psi}_{k} \leq \fr{\bar{\Psi}_{0}}{2^k}$. Then it follows from \eqref{eq:rec_PAGE_online} that
	\begin{eqnarray}
		\Psi_{t+1} - \Psi_t &\leq&  -  \eta \mu \Psi_t^{\fr{2}{\al}} -   \fr{p_k \lambda_k}{2}  G_t \rb{1 - \fr{4 \eta \mu }{p_k \al } \Psi_t^{\fr{2-\al}{\al}}} + \fr{p_k \lambda_k}{2} \fr{\sigma^2}{b_k}  \notag \\
		&\leq&  -  \eta \mu \Psi_t^{\fr{2}{\al}} -   \fr{p_k \lambda_k}{2}  G_t \rb{1 - \fr{4 \eta \mu }{p_k \al } \bar{\Psi}_k^{\fr{2-\al}{\al}}} + \fr{p_k \lambda_k}{2} \fr{\sigma^2}{b_k}  \notag \\
		&\leq&  -  \eta \mu \Psi_t^{\fr{2}{\al}} -   \fr{p_k \lambda_k}{2}  G_t \rb{1 - \fr{4 \eta \mu }{p_k \al } \rb{\fr{\bar{\Psi}_0}{2^k}}^{\fr{2-\al}{\al}}} + \fr{p_k \lambda_k}{2} \fr{\sigma^2}{b_k}  \notag \\
		&\overset{(i)}{\leq}&  -  \eta \mu \Psi_t^{\fr{2}{\al}} + \fr{p_k \lambda_k}{2} \fr{\sigma^2}{b_k}  \notag \\
		&\overset{(ii)}{=}&  -  \eta \mu \Psi_t^{\fr{2}{\al}} + \fr{p_k }{2} \fr{\sigma^2}{b_k} \fr{ b_k^{\prime} } { 4\eta (1-p_k) \cL^2} \notag \\
		&\overset{(iii)}{=}&  -  \eta \mu \Psi_t^{\fr{2}{\al}} +  \fr{\sigma^2}{b_k} \fr{ 1 } { 8\eta \cL^2} \notag \\
		&\overset{(iv)}{=}&   -  \eta \mu \Psi_t^{\fr{2}{\al}} +  \fr{\eta \mu}{2} \rb{\fr{\bar{\Psi}_0}{2 \cdot 2^{\fr{2-\al}{\al}} \cdot 2^{k}}}^{\fr{2}{\al}}. \notag
	\end{eqnarray}
	where $(i)$ follows by $p_k \geq \fr{1}{2 b_k^{\prime}} = \fr{4\eta \mu}{\al} \rb{ \fr{\bar{\Psi}_0}{2^k}}^{\fr{2-\al}{\al}}$ and the assumption on the step-size, $(ii)$ is due to $ \lambda_k = \fr{ b_k^{\prime} } { 4\eta (1-p_k) \cL^2} $, $(iii)$ is due to $\fr{p_k b_k^{\prime}}{1-p_k} = 1$, and $(iv)$ holds by the assumption on the batch-size $b_k$. The above recursion guaranties that after at most $T_k = \fr{2}{\eta \mu} \rb{ 2 \cdot 2^{\fr{2-\al}{\al}} \rb{ \fr{ 2^{k} U}{\bar{\Psi}_0} + 2 \rb{ \fr{\eta \mu}{2} }^{\fr{\al}{2-\al}} }}^{\fr{2-\al}{\al}} $ inner loop iterations, we have $\Psi_{T_k} \leq \fr{\Psi_0}{4} = \fr{\bar{\Psi}_k}{ 2 \cdot 2^{\fr{2-\al}{\al}} } = \fr{\bar{\Psi}_0}{ 2 \cdot 2^{\fr{2-\al}{\al}}  \cdot 2^k}$. Indeed, if for $t = 0,\ldots, T_k-1$, we have not reached $\Psi_t \leq \fr{\bar{\Psi}_0}{  2 \cdot 2^{\fr{2-\al}{\al}}  \cdot 2^k}$, then $\Psi_{t+1} - \Psi_t \leq  -  \fr{\eta \mu}{2} \Psi_t^{\fr{2}{\al}} \leq 0$ and by Lemma~\ref{lemma:app:3} (with $c = \fr{2-\al}{\al}$, $b = \nfr{\eta \mu}{2}$), we get $\Psi_{T_k} \leq \fr{\Psi_0}{ 2 \cdot 2^{\fr{2-\al}{\al}} } = \fr{\bar{\Psi}_k}{ 2 \cdot 2^{\fr{2-\al}{\al}} } $. Now it remains to analyze the outer loop of Algorithm~\ref{alg:PAGE_w_restarts}. By the definition of $\bar{\Psi}_k$ and the choice of batch-sizes $b_k^{\prime}$ we have $\lambda_{k+1} \leq 2^{\fr{2-\al}{\al}} \lambda_k$  and  $\bar{\Psi}_{{k+1}} \leq 2^{\fr{2-\al}{\al}} \Psi_{T_k} \leq  \fr{\bar{\Psi}_k}{2} \leq  \fr{\bar{\Psi}_0}{2^{k+1}}$. 
	Thus, the induction step is complete. 
	
	In order to achieve $\bar{\Psi}_K \leq \epsilon_f$, we need $K = \log_2\rb{\fr{\bar{\Psi}_0}{\epsilon_f}}$ outer loop iterations. The total number of iterations is
	\begin{eqnarray}
     N &=& \sum_{k=0}^{K-1} T_k \notag \\
     &=&  \sum_{k=0}^{K-1} \fr{2}{\eta \mu} \rb{ 2 \cdot 2^{\fr{2-\al}{\al}} \rb{ \fr{ 2^{k} U}{\bar{\Psi}_0} + 2 \rb{ \fr{\eta \mu}{2} }^{\fr{\al}{2-\al}} }}^{\fr{2-\al}{\al}}  \notag \\ 
     &=&  \fr{2}{\eta \mu} \rb{2 \cdot 2^{\fr{2-\al}{\al}}}^{\fr{2-\al}{\al}}  \sum_{k=0}^{K-1}  \rb{  \fr{ 2^{k} U}{\bar{\Psi}_0} + 2 \rb{ \fr{\eta \mu}{2} }^{\fr{\al}{2-\al}} }^{\fr{2-\al}{\al}}  \notag \\
     &=&  \fr{2 \cdot 2^{\fr{2(2-\al)}{\al^2}} }{\eta \mu}  \rb{  \fr{ U }{\bar{\Psi}_0} + 2 \rb{ \fr{\eta \mu}{2} }^{\fr{\al}{2-\al}} }^{\fr{2-\al}{\al}}  \sum_{k=0}^{K-1}  \rb{ 2^{\fr{2-\al}{\al}} }^k \notag \\ 
     &\leq&  \fr{2 \cdot 2^{\fr{2(2-\al)}{\al^2}} }{\eta \mu}  \rb{  \fr{ U }{\bar{\Psi}_0} + 2 \rb{ \fr{\eta \mu}{2} }^{\fr{\al}{2-\al}} }^{\fr{2-\al}{\al}}  \rb{ 2^{\fr{2-\al}{\al}} }^{K} \rb{2^{\fr{2-\al}{\al}} - 1}^{-1}  \notag \\ 
     &\leq&  \fr{2 \cdot 2^{\fr{2(2-\al)}{\al^2}} }{\eta \mu}  \rb{  \fr{ U }{\bar{\Psi}_0} + 2 \rb{ \fr{\eta \mu}{2} }^{\fr{\al}{2-\al}} }^{\fr{2-\al}{\al}}  \rb{2^{\fr{2-\al}{\al}} - 1}^{-1}  \rb{ \fr{\bar{\Psi}_0}{\epsilon_f} }^{\fr{2-\al}{\al}} . \notag 
	\end{eqnarray}
	The expected computational cost per iteration is 
	\begin{eqnarray}
	    p_k b_k + 2 (1-p_k) b_k^{\prime }   &\leq &  \fr{b_k}{1 + b_k^{\prime}}  + 2 b_k^{\prime } \notag \\
	     &\leq & \fr{b_k}{b_k^{\prime}} + 2 b_k^{\prime } \notag \\
	      &\leq & \fr{\rb{\fr{2 \cdot 2^{\fr{2-\al}{\al}} \cdot 2^k}{\bar{\Psi}_0}}^{\fr{2}{\al}} \fr{\sigma^2}{ 4 \mu \eta^2 \cL^2}}{ \fr{\al}{8\eta \mu} \rb{ \fr{2^k}{\bar{\Psi}_0}}^{\fr{2-\al}{\al}} } + 2  \fr{\al}{8\eta \mu}  \rb{ \fr{2^k}{\bar{\Psi}_0}}^{\fr{2-\al}{\al}}   \notag \\
	      &\leq &  \rb{2 \cdot 2^{\fr{2-\al}{\al}} }^{\fr{2}{\al}}  \fr{ 2 \sigma^2}{  \eta \cL^2}  \fr{2^k}{\bar{\Psi}_0}  +   \fr{\al}{4\eta \mu}   \rb{ \fr{2^k}{\bar{\Psi}_0}}^{\fr{2-\al}{\al}}  \notag \\
	       &\leq &   \fr{2 \sigma^2 \cdot 2^{\nfr{4}{\al^2}}  }{ 4  \eta \cL^2}  \fr{2^k}{\bar{\Psi}_0}  +   \fr{\al}{4\eta \mu}   \rb{ \fr{2^k}{\bar{\Psi}_0}}^{\fr{2-\al}{\al}}   \notag \\
	   &\leq &  \rb{ \fr{\sigma^2 \cdot 2^{\nfr{4}{\al^2}}  }{ 4 \eta \cL^2 \bar{\Psi}_0 }    + \fr{\al}{4\eta \mu \bar{\Psi}_0^{\fr{2-\al}{\al} }}  } {2^k}.   \notag
	\end{eqnarray}
	Denote $A := \rb{ \fr{\sigma^2 \cdot 2^{\nfr{4}{\al^2}}  }{ 4 \eta \cL^2 \bar{\Psi}_0 }    + \fr{\al}{4\eta \mu \bar{\Psi}_0^{\fr{2-\al}{\al} }}  }$, then the total cost is 
	\begin{eqnarray}
	 \text{cost} &=&  \sum_{k=0}^{K-1} T_k \rb{p_k b_k + 2 (1-p_k) b_k^{\prime } }   \notag \\
	 &=& A \sum_{k=0}^{K-1} T_k \cdot {2^k} \notag \\
	 &=& A \sum_{k=0}^{K-1}  \fr{2}{\eta \mu} \rb{ 2 \cdot 2^{\fr{2-\al}{\al}} \rb{ \fr{ 2^{k} U}{\bar{\Psi}_0} + 2 \rb{ \fr{\eta \mu}{2} }^{\fr{\al}{2-\al}} }}^{\fr{2-\al}{\al}}   2^k \notag \\
	 &=& {2 \cdot 2^{\fr{2(2-\al)}{\al^2}} A }  \rb{  \fr{2}{\eta \mu} \rb{ \fr{ U }{\bar{\Psi}_0} }^{\fr{2-\al}{\al}} \sum_{k=0}^{K-1}   \rb{2^k}^{\fr{2-\al}{\al}} 2^k  + 2^{\fr{2-\al}{\al}}  \sum_{k=0}^{K-1}   2^k  }   \notag \\
	 &=& {2 \cdot 2^{\fr{2(2-\al)}{\al^2}} A }  \rb{  \fr{2}{\eta \mu} \rb{ \fr{ U }{\bar{\Psi}_0} }^{\fr{2-\al}{\al}} \sum_{k=0}^{K-1}   \rb{2^k}^{\fr{2}{\al}}   + 2^{\fr{2-\al}{\al}}  \sum_{k=0}^{K-1}   2^k  }   \notag \\
	  &=& {2 \cdot 2^{\fr{2(2-\al)}{\al^2}} A }  \rb{  \fr{2}{\eta \mu} \rb{ \fr{ U }{\bar{\Psi}_0} }^{\fr{2-\al}{\al}} \rb{2^{K}}^{\fr{2}{\al}}  \rb{2^{\nfr{2}{\al} } - 1 }^{-1}   + 2^{\fr{2-\al}{\al}}  2^K }   \notag ,
	  \end{eqnarray}
	  which further simplifies by using the value of $A$ and the step-size
	  	\begin{eqnarray}
	 \text{cost}
	 &=& \cO\rb{  \fr{A}{\eta \mu }  \rb{  \fr{ 1 }{\bar{\Psi}_0}  }^{\fr{2-\al}{\al}}     \rb{2^{K}}^{\fr{2}{\al}}   }  \notag \\
	 &=& \cO\rb{  \fr{A}{\eta \mu }  \rb{  \fr{ 1 }{\bar{\Psi}_0}  }^{\fr{2-\al}{\al}}    \rb{\fr{\bar{\Psi}_0}{\epsilon_f}}^{\fr{2}{\al}}   }  \notag \\
	 &=& \cO\rb{  \fr{A \bar{\Psi}_0 }{\eta \mu }     \rb{\fr{1}{\epsilon_f}}^{\fr{2}{\al} }    }  \notag \\
	 &=& \cO\rb{  \rb{ \fr{\sigma^2 }{\mu} + \fr{\bar{\Psi}_0^{\fr{2(\al-1)}{\al}}}{\eta^2\mu^2}  } \rb{\fr{1}{\epsilon_f}}^{\fr{2}{\al} }    }  \notag \\
	 &=& \cO\rb{  \rb{ \fr{\sigma^2 }{\mu} + \kappa^2 \bar{\Psi}_0^{\fr{2(\al-1)}{\al}}  } \rb{\fr{1}{\epsilon_f}}^{\fr{2}{\al} }    }  \notag \\
	 &=& \cO\rb{  \epsilon_f^{\nfr{-2}{\al} }   } . \notag 
	\end{eqnarray}
\end{proof}

\subsection{Proof of Theorem~\ref{thm:PAGE_w_restarts_finite_sum}}

Now we state and prove a detailed version of Theorem~\ref{thm:PAGE_w_restarts_finite_sum}.

\begin{theorem}\label{thm:PAGE_w_restarts_finite_sum_detailed}
    Let $f(\cdot)$ have the form \eqref{eq:problem_finite_sum} and satisfy Assumptions \ref{ass:l-smooth}, \ref{as:lojasiewicz} (with $\al \in [1, 2)$) and \ref{as:avg_smoothness_page}, let the sequences in Algorithm~\ref{alg:PAGE_w_restarts} be chosen as $p_k = \fr{1}{n+1}$, $b_k^{\prime} = 1$, $b_k = n$, 
    \begin{align*}
    &T_k =  \fr{1}{\eta_k \mu } \rb{\fr{U 2^{k+1}}{\bar{\Psi}_0 } + 2 \rb{\eta_k \mu}^{\fr{\al}{2-\al}}  }^{\fr{2-\al}{\al}},\\
    &\eta_k = \min\cb{ \fr{1}{2 \sqrt{n}\cL}  , \fr{\al}{4\mu (n+1)}\rb{ \fr{2^k}{\bar{\Psi}_0} }^{\fr{2-\al}{\al}} }, \end{align*}
    where $\bar{\Psi}_0 \eqdef  f(\bar{x}_0)  - f(\xstar) + \lambda_0 \sqnorm{\bar{g}_0 - \nabla f(\bar{x}_0) } $, $ \lambda_0 \eqdef \fr{ b^{\prime} } { 4\eta_0 (1-p) \cL^2} $
    Then, for any $\epsilon_f > 0$, Algorithm \ref{alg:PAGE_w_restarts} returns a point $\bar{x}_K$ with $\Exp{f(\bar{x}_K) - f^{\star}} \leq \epsilon_f$ after 
    \begin{eqnarray}
    N &:=& \sum_{k=0}^{K-1} T_k =  \cwO\rb{  n + \sqrt{n} \kappa  \epsilon_f^{-\fr{2-\al}{\al} } } \notag
    \end{eqnarray}
    iterations. 
    The expected total computational cost (sample complexity) is
	 \begin{eqnarray}
	 \text{cost} :=  \sum_{k=0}^{K-1} T_k \rb{p_k b_k + 2 (1-p_k) b_k^{\prime } } = \cwO\rb{ n + \sqrt{n} \kappa \epsilon_f^{-\fr{2-\al}{\al}}  } . \notag
	\end{eqnarray}
\end{theorem}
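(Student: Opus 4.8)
The plan is to mirror the structure of the proof of Theorem~\ref{thm:PAGE_online_w_restarts1_detailed}, exploiting the fact that in the finite-sum regime the full-batch choice $b_k = n$ removes the variance term entirely. First I would invoke Lemma~\ref{le:PAGE_online} together with Lemma~\ref{le:useful_fact_1} (applied with $a = \fr{2}{\al}$ and $x = \fr{\lambda_k G_t}{\Psi_t} \le 1$) to obtain, for the Lyapunov function $\Psi_t \eqdef \delta_t + \lambda_k G_t$ with $G_t \eqdef \Exp{\fr{1}{2}\sqnorm{g_t - \nfxt}}$ and $\lambda_k \eqdef \fr{b_k'}{4\eta_k(1-p_k)\cL^2}$, the recursion
\begin{equation*}
    \Psi_{t+1} - \Psi_t \le -\eta_k \mu \Psi_t^{\fr{2}{\al}} - \fr{p_k \lambda_k}{2} G_t \rb{1 - \fr{4\eta_k \mu (n+1)}{\al} \Psi_t^{\fr{2-\al}{\al}}},
\end{equation*}
after substituting $p_k = \fr{1}{n+1}$, $b_k = n$, $b_k' = 1$; this is exactly the recursion displayed in the discussion preceding the theorem.

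Next I would run the same double induction as in the online case: across the outer loop I claim $\bar\Psi_k \le \bar\Psi_0 / 2^k$, and inside stage $k$ I claim $\Psi_{t+1} \le \Psi_t$ until the target $\Psi_{T_k} \le \bar\Psi_k/2$ is reached. The decisive point—and the source of the acceleration—is that the bracketed factor multiplying $G_t$ is nonnegative precisely when $\eta_k \le \fr{\al}{4\mu(n+1)} \Psi_t^{-(2-\al)/\al}$; using the induction hypothesis $\Psi_t \le \bar\Psi_k \le \bar\Psi_0/2^k$ this is guaranteed by the second branch of the $\min$ defining $\eta_k$. Whenever it holds, the recursion collapses to $\Psi_{t+1} - \Psi_t \le -\eta_k\mu\Psi_t^{2/\al}$, and Lemma~\ref{lemma:app:3} (with $c = \fr{2-\al}{\al}$) delivers the contraction $\Psi_{T_k} \le \bar\Psi_k/2$ after $T_k$ inner steps. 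Because $b_k'=1$ is constant and $\eta_k$ is nondecreasing, $\lambda_k$ is nonincreasing, so $\bar\Psi_{k+1} \le \Psi_{T_k} \le \bar\Psi_k/2$, closing the outer induction; reaching $\bar\Psi_K \le \epsilon_f$ then costs $K = \log_2(\bar\Psi_0/\epsilon_f)$ restarts.

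The remaining work is to sum $N = \sum_{k=0}^{K-1} T_k$ and to read off the per-iteration cost. Here the two branches of $\eta_k$ split the analysis into a \emph{ramp-up} phase, in which $\eta_k = \fr{\al}{4\mu(n+1)}(2^k/\bar\Psi_0)^{(2-\al)/\al}$ grows and $\fr{1}{\eta_k\mu}$ cancels the leading $2^{k(2-\al)/\al}$ growth of the bracket in $T_k$, so that each $T_k = \cO(n)$; and a \emph{saturated} phase, entered at the crossover index $k^\star \sim \fr{\al}{2-\al}\log_2(\sqrt n / \kappa)$, in which $\eta_k = \fr{1}{2\sqrt n \cL}$ is constant and $T_k \sim \sqrt n \,\kappa\, 2^{k(2-\al)/\al}$. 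The ramp-up phase has only $\cwO(1)$ stages and contributes $\cwO(n)$; the saturated geometric sum is dominated by its last term $2^{K(2-\al)/\al} = (\bar\Psi_0/\epsilon_f)^{(2-\al)/\al}$, contributing $\cwO(\sqrt n \,\kappa\, \epsilon_f^{-(2-\al)/\al})$, which yields $N = \cwO(n + \sqrt n \,\kappa\, \epsilon_f^{-(2-\al)/\al})$. Finally, the expected cost of one inner iteration is $p_k b_k + 2(1-p_k)b_k' = \fr{n}{n+1} + \fr{2n}{n+1} = \cO(1)$, so the sample complexity equals $\cO(N)$ up to the one-time $\cO(n)$ cost of forming $\bar g_0$, giving the stated $\cwO(n + \sqrt n \,\kappa\, \epsilon_f^{-(2-\al)/\al})$.

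The main obstacle is neither the per-stage contraction nor the summation, both of which are routine once the recursion is in hand, but rather the coupled design of the restart schedule: the step-size must be pushed as large as the variance-reduction stability bound $\eta \le \fr{1}{2\sqrt n \cL}$ permits (this is what buys the factor $\sqrt n$ over \algname{GD}), yet early on it must be throttled by $\fr{\al}{4\mu(n+1)}\Psi^{-(2-\al)/\al}$ to keep the $G_t$ term from destabilizing the Lyapunov descent. Verifying that these two constraints are simultaneously consistent with the geometric decay $\bar\Psi_k \le \bar\Psi_0/2^k$—i.e. that the second constraint relaxes exactly fast enough as $\Psi$ shrinks for the step-size to saturate at the stability bound after only logarithmically many stages—is the delicate step, and it is precisely the restart mechanism that makes it possible.
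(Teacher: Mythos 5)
Your proposal is correct and follows essentially the same route as the paper's proof: the same Lyapunov recursion from Lemma~\ref{le:PAGE_online} combined with Lemma~\ref{le:useful_fact_1}, the same observation that $b_k=n$ kills the variance term, the same outer induction $\bar{\Psi}_k \leq \bar{\Psi}_0/2^k$ closed via Lemma~\ref{lemma:app:3} and the monotonicity of $\eta_k$ (hence of $\lambda_k$), and the same accounting of the two step-size regimes — the paper writes $\nfr{1}{\eta_k}$ as a max inside every summand rather than splitting at a crossover index $k^\star$, but the resulting bounds $\cwO(n)$ and $\cwO(\sqrt{n}\kappa\epsilon_f^{-(2-\al)/\al})$ are identical. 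The only cosmetic discrepancy is that with the stated $T_k$ the contraction one actually derives is $\Psi_{T_k}\leq \bar{\Psi}_0/2^{k+1}$ rather than $\bar{\Psi}_k/2$, which is all the induction needs.
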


\begin{proof}
Combining the result of Lemma~\ref{le:PAGE_online} and Lemma~\ref{le:useful_fact_1} with $a = \fr{2}{\al}$, $x = \fr{\lambda G_t}{\Psi_t} \leq 1$ and noticing that $\sigma^2 = 0$, we obtain the following recursion

\begin{eqnarray}\label{eq:finite_sum_PAGE_main}
		\Psi_{t+1} - \Psi_t &\leq&  -  \eta \mu \Psi_t^{\fr{2}{\al}} -   \fr{p \lambda}{2}  G_t \rb{1 - \fr{4 \eta \mu (n+1)}{\al} \Psi_t^{\fr{2-\al}{\al}}},
\end{eqnarray}
where ${\Psi}_t \eqdef \delta_t  + \lambda_k G_t $, $G_t \eqdef  \Exp{\fr{1}{2}\sqnorm{g_t - \nfxt}}$, $\delta_t \eqdef \Exp{f(\xt) - f(\xstar)}$,   $ \lambda_k \eqdef \fr{ b^{\prime} } { 4\eta_k (1-p) \cL^2} $.

Define the sequence $\cb{\bar{\Psi}_k}_{k\geq 0}$ as $\bar{\Psi}_k \eqdef \Exp{ f(\bar{x}_k)  - f(\xstar) + \lambda_k \sqnorm{\bar{g}_k - \nabla f(\bar{x}_k) } }$ and $ \lambda_k \eqdef \fr{ b^{\prime} } { 4\eta_k (1-p) \cL^2} $, which corresponds to the outer loop of the algorithm. For each $k = 0,\ldots,K-1$, the inner loop of Algorithm~\ref{alg:PAGE_w_restarts} starts with $x_0$ such that $\Psi_0 \eqdef \bar{\Psi}_k $. Let us prove by induction that the sequence $\cb{\bar{\Psi}_k}_{k\geq 0}$ satisfies $\bar{\Psi}_k \leq \fr{\bar{\Psi}_0}{2^k}$ for all $k = 0,\ldots,K-1$. The induction base for $k = 0$ is trivial. Let us prove the induction step for $k+1$. The evolution of the inner loop is characterized by \eqref{eq:online_PAGE_main} and given the assumption on the step-size, we have $\Psi_{t+1} - \Psi_t \leq  -  \eta \mu \Psi_t^{\fr{2}{\al}} $ for all $t = 0, \ldots, T_k-1$ .  Therefore, by Lemma~\ref{lemma:app:3} (with $c = \fr{2-\al}{\al}$, $b = \eta \mu$) we have  
 \begin{align*}
     \Psi_{T_k} &\leq \fr{ U + \rb{\eta_k \mu}^{\fr{\al}{2-\al}}  \bar{\Psi}_k }{ \rb{ \eta_k \mu T_k }^{\fr{\al}{2-\al}} } = \fr{ U + \rb{\eta_k \mu}^{\fr{\al}{2-\al}} \bar{\Psi}_k }{ \fr{U \cdot 2^{k+1}}{\bar{\Psi}_0}  + 2 \rb{\eta_k \mu}^{\fr{\al}{2-\al}} } \notag \\
     & =  \fr{ U + \rb{\eta_k \mu}^{\fr{\al}{2-\al}}\bar{\Psi}_k }{ U  +  \rb{\eta_k \mu}^{\fr{\al}{2-\al}} \fr{\bar{\Psi}_0}{2^{k}} }  \cdot \fr{\bar{\Psi}_0}{2^{k+1}} \overset{(i)}{\leq} \fr{\bar{\Psi}_0}{2^{k+1}},
 \end{align*}
 where in $(i)$, we used $\bar{\Psi}_k \leq \fr{\bar{\Psi}_0}{2^k}$. Furthermore, since $\eta_{k+1} \geq \eta_k$,
 then $\lambda_{k+1} \leq \lambda_k$  and  $\bar{\Psi}_{{k+1}} \leq \Psi_{T_k} \leq  \fr{\bar{\Psi}_0}{2^{k+1}} $, and the induction step is complete. 

In order to achieve $\bar{\Psi}_K \leq \epsilon_f$, we need $K = \log_2\rb{\fr{\bar{\Psi}_0}{\epsilon_f}}$ outer loop iterations. The total number of iterations is 
 \begin{eqnarray}
 N &=& \sum_{k=0}^{K-1} T_k \notag \\
 &\overset{(i)}{\leq} &  \sum_{k=0}^{K-1} \max\cb{ \fr{4 (n+1)}{\al} \rb{\fr{\bar{\Psi}_0}{2^k}}^{\fr{(2-\al)}{\al}}, 2\sqrt{n} \kappa}  \rb{\fr{U 2^{k+1}}{\bar{\Psi}_0 } + \fr{\mu }{\sqrt{n} \cL} }^{\fr{2-\al}{\al}}  \notag \\
 &\leq &  \sum_{k=0}^{K-1} \max\cb{ \fr{4 (n+1)}{\al}   \rb{2 \rb{U + \fr{\bar{\Psi}_0}{\sqrt{n} \kappa}  } }^{\fr{2-\al}{\al}} , 2\sqrt{n} \kappa \rb{\fr{2U}{\bar{\Psi}_0} + \fr{1}{\sqrt{n} \kappa}}^{\fr{2-\al}{\al}} \rb{ 2^{\fr{2-\al}{\al}} }^{k}  } \notag \\
 &\leq &   \max\cb{ \fr{4 (n+1)}{\al}   \rb{2 \rb{U + \fr{\bar{\Psi}_0}{\sqrt{n} \kappa} } }^{\fr{2-\al}{\al}} K , 2\sqrt{n} \kappa \rb{\fr{2U}{\bar{\Psi}_0} + \fr{1}{\sqrt{n} \kappa}}^{\fr{2-\al}{\al}} \rb{ 2^{\fr{2-\al}{\al}} }^{K} \rb{2^{\fr{2-\al}{\al}} - 1}^{-1}  } \notag \\
 &\leq &   \max\cb{ \fr{4 (n+1)}{\al}   \rb{2 \rb{U + \fr{\bar{\Psi}_0}{\sqrt{n} \kappa} } }^{\fr{2-\al}{\al}} \log_2\rb{\fr{\bar{\Psi}_0}{\epsilon_f}} , \fr{2\sqrt{n}}{2^{\fr{2-\al}{\al}} - 1} \kappa \rb{\fr{2U}{\bar{\Psi}_0} + \fr{1}{\sqrt{n} \kappa}}^{\fr{2-\al}{\al}} \rb{ \fr{\bar{\Psi}_0}{\epsilon_f} }^{\fr{2-\al}{\al}}  } \notag \\
  &= &   \cwO\rb{ n + \sqrt{n} \kappa  \epsilon_f^{-\fr{2-\al}{\al} } } , \notag 
	\end{eqnarray}
	where in $(i)$ we used the assumption on the step-sizes. 
The expected computational cost per iteration is $ {p_k b_k + 2 (1-p_k) b_k^{\prime } }  \leq 3$ and thus the total cost is $\cwO\big( n + \sqrt{n} \kappa \epsilon_f^{-\fr{2-\al}{\al}} \big) $.
\end{proof}

\subsection{Technical lemmas}
	\begin{lemma}\label{le:useful_fact_1}
		Let $x \leq 1$ and $a \geq 1$, then $(1-x)^a \geq 1 - a x $.
	\end{lemma}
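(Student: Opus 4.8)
The plan is to recognize the claim as a form of Bernoulli's inequality and to prove it by a direct monotonicity argument. First I would note that the hypothesis $x \le 1$ guarantees $1 - x \ge 0$, so the power $(1-x)^a$ is well defined for the real exponent $a \ge 1$; this well-definedness is the only place the constraint $x \le 1$ is genuinely used. I would then introduce the auxiliary function
\begin{equation*}
\phi(x) := (1-x)^a - 1 + a x, \qquad x \in (-\infty, 1],
\end{equation*}
so that the assertion becomes simply $\phi(x) \ge 0$ on this domain.

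Second, I would compute $\phi'(x) = -a(1-x)^{a-1} + a = a\big(1 - (1-x)^{a-1}\big)$ and observe that $\phi(0) = 0$. The sign of $\phi'$ is governed by comparing $(1-x)^{a-1}$ with $1$: since $a - 1 \ge 0$, one has $(1-x)^{a-1} \le 1$ exactly when $0 \le x \le 1$ and $(1-x)^{a-1} \ge 1$ when $x \le 0$. Consequently $\phi' \ge 0$ on $[0,1]$ while $\phi' \le 0$ on $(-\infty,0]$, so $x = 0$ is a global minimizer of $\phi$ on $(-\infty,1]$. Combined with $\phi(0)=0$, this yields $\phi(x) \ge 0$ for every $x \le 1$, which is the claim.

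An equivalent and perhaps cleaner route uses convexity: the map $t \mapsto t^a$ is convex on $[0,\infty)$ for $a \ge 1$, since its second derivative $a(a-1)t^{a-2}$ is nonnegative there, and therefore lies above its tangent line at $t = 1$, giving $t^a \ge 1 + a(t-1)$ for all $t \ge 0$. Substituting $t = 1 - x \ge 0$ (legitimate precisely because $x \le 1$) recovers $(1-x)^a \ge 1 - a x$ at once. I would likely present this convexity version as the main proof and keep the monotonicity computation in reserve.

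The statement is elementary, so there is no substantial obstacle; the only points requiring care are (i) ensuring the base $1 - x$ is nonnegative before invoking a non-integer power, which is exactly what $x \le 1$ supplies, and (ii) tracking that the sign of $\phi'$ flips at $x = 0$, so that the unique stationary point is genuinely the global minimum rather than a maximum. The boundary cases are consistent and can be verified directly: $a = 1$ gives equality throughout, and $x = 1$ gives left side $0$ against right side $1 - a \le 0$.
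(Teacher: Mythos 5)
Your proposal is correct, and your main argument—convexity of $t \mapsto t^a$ for $a \geq 1$ together with the tangent-line bound at $t=1$—is exactly the route the paper takes, which it states in one line as following "directly by applying the definition of convexity." The supplementary monotonicity argument via $\phi(x) = (1-x)^a - 1 + ax$ is also sound but unnecessary given the convexity proof.
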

\begin{proof}
    The results follows directly by applying the definition of convexity. 
\end{proof}
The following lemma is standard, we provide its proof for completeness. 
\begin{lemma}\label{le:aux_smooth_lemma}
	Suppose that function $f(\cdot)$ is $L$-smooth and let $x_{t+1}\eqdef x_{t}-\eta g_{t} ,$ where $g_t\in \R^d$ is any vector, and $\eta>0$ any scalar. Then we have
	\begin{eqnarray}\label{eq:aux_smooth_lemma}
		f(x_{t+1}) \leq f(x_{t})-\fr{\eta}{2}\sqnorm{\nabla f(x_{t})}-\left(\fr{1}{2 \eta}-\fr{L}{2}\right)\sqnorm{x_{t+1}-x_{t}}+\fr{\eta}{2}\sqnorm{g_{t}-\nabla f(x_{t})}.
	\end{eqnarray}
\end{lemma}

\begin{proof}
 Define $\bar{x}_{t+1}:=x_{t}-\eta \nabla f\left(x_{t}\right)$, then using Assumption~\ref{ass:l-smooth} after some rearrangements we obtain
\begin{eqnarray*}
f\left(x_{t+1}\right) & {\leq}& f\left(x_{t}\right)+\left\langle\nabla f\left(x_{t}\right), x_{t+1}-x_{t}\right\rangle+\frac{L}{2}\left\|x_{t+1}-x_{t}\right\|^{2} \\
&=& f\left(x_{t}\right)+\left\langle\nabla f\left(x_{t}\right)-g_{t}, x_{t+1}-x_{t}\right\rangle+\left\langle g_{t}, x_{t+1}-x_{t}\right\rangle+\frac{L}{2}\left\|x_{t+1}-x_{t}\right\|^{2} \\
&=&f\left(x_{t}\right)+\left\langle\nabla f\left(x_{t}\right)-g_{t},-\eta g_{t}\right\rangle-\left(\frac{1}{\eta}-\frac{L}{2}\right)\left\|x_{t+1}-x_{t}\right\|^{2}\\
&=& f\left(x_{t}\right)+\eta\left\|\nabla f\left(x_{t}\right)-g_{t}\right\|^{2}-\eta\left\langle\nabla f\left(x_{t}\right)-g_{t}, \nabla f\left(x_{t}\right)\right\rangle-\left(\frac{1}{\eta}-\frac{L}{2}\right)\left\|x_{t+1}-x_{t}\right\|^{2} \\
&=& f\left(x_{t}\right)+\eta\left\|\nabla f\left(x_{t}\right)-g_{t}\right\|^{2}-\frac{1}{\eta}\left\langle x_{t+1}-\bar{x}_{t+1}, x_{t}-\bar{x}_{t+1}\right\rangle-\left(\frac{1}{\eta}-\frac{L}{2}\right)\left\|x_{t+1}-x_{t}\right\|^{2} \\
&=& f\left(x_{t}\right)+\eta\left\|\nabla f\left(x_{t}\right)-g_{t}\right\|^{2}-\left(\frac{1}{\eta}-\frac{L}{2}\right)\left\|x_{t+1}-x_{t}\right\|^{2} \\
&& \qquad - \frac{1}{2 \eta}\left(\left\|x_{t+1}-\bar{x}_{t+1}\right\|^{2}+\left\|x_{t}-\bar{x}_{t+1}\right\|^{2}-\left\|x_{t+1}-x_{t}\right\|^{2}\right) \\
&=& f\left(x_{t}\right)+\eta\left\|\nabla f\left(x_{t}\right)-g_{t}\right\|^{2}-\left(\frac{1}{\eta}-\frac{L}{2}\right)\left\|x_{t+1}-x_{t}\right\|^{2} \\
&& \qquad - \frac{1}{2 \eta}\left(\eta^{2}\left\|\nabla f\left(x_{t}\right)-g_{t}\right\|^{2}+\eta^{2}\left\|\nabla f\left(x_{t}\right)\right\|^{2}-\left\|x_{t+1}-x_{t}\right\|^{2}\right) \\
&=& f\left(x_{t}\right)-\frac{\eta}{2}\left\|\nabla f\left(x_{t}\right)\right\|^{2}-\left(\frac{1}{2 \eta}-\frac{L}{2}\right)\left\|x_{t+1}-x_{t}\right\|^{2}+\frac{\eta}{2}\left\|g_{t}-\nabla f\left(x_{t}\right)\right\|^{2}.
\end{eqnarray*}

\end{proof}

\begin{lemma}\label{lemma:app:3}
Let  $\cb{r_k}_{k\geq 0}$ be a non-negative sequence, which satisfies
\begin{align*}
    r_{k+1} \leq r_k(1-b r_k^{c}),\quad \text{for all } k
\end{align*}
and $c > 0$. Then 
\begin{eqnarray}
 r_k \leq \fr{U + b^{\nfr{1}{c}} r_0}{\rb{ b \rb{k + 1} }^{\nfr{1}{c}} },\notag
\end{eqnarray}
where $U \eqdef 2^{\nfr{1}{c}} \cdot  c^{-\fr{2}{c} -1 } + c^{-\nfr{1}{c}}$. 
\end{lemma}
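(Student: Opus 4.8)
The plan is to linearize this non-monotone multiplicative recursion by passing to the reciprocal $c$-th power. I would set $s_k := r_k^{-c}$, after disposing of the degenerate case: if $r_k=0$ for some index, the hypothesis forces $r_{k+1}\le 0$, hence $r_{k+1}=0$, and the sequence stays at zero, so the (positive) claimed bound holds trivially from that index onward. For indices with $r_k>0$, non-negativity forces $x_k := b r_k^c \le 1$, since otherwise $r_k(1-br_k^c)<0\le r_{k+1}$ is a contradiction (and $x_k=1$ again sends the sequence to zero). This substitution is exactly what converts the awkward map $r\mapsto r(1-br^c)$, which is not monotone in $r$, into a clean additive recursion.

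First I would establish the one-step growth $s_{k+1}\ge s_k + cb$. Since $\phi(x):=(1-x)^{-c}$ is convex on $[0,1)$ with $\phi(0)=1$ and $\phi'(0)=c$, its tangent bound gives $(1-x)^{-c}\ge 1+cx$. Applying this at $x=x_k$ and using $s_k x_k = r_k^{-c}\cdot b r_k^c = b$ yields $s_{k+1}=s_k(1-x_k)^{-c}\ge s_k(1+cx_k)=s_k+cb$. Telescoping over $0,\dots,k-1$ gives $s_k\ge s_0+cbk$, i.e.
$$r_k \le \big(r_0^{-c}+cbk\big)^{-1/c}, \qquad k\ge 0.$$

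Finally I would convert this clean estimate into the stated two-term form. After multiplying the target inequality through by $(b(k+1))^{1/c}$, it suffices to show $\big(\tfrac{b(k+1)}{r_0^{-c}+cbk}\big)^{1/c}\le U + b^{1/c}r_0$ whenever $U\ge c^{-1/c}$. Writing $A:=r_0^{-c}$, the map $k\mapsto \tfrac{b(k+1)}{A+cbk}$ is monotone with the sign of its derivative governed by $A-bc$: if $A\ge bc$ it increases to the limit $1/c$, so the left side is $\le c^{-1/c}\le U$; if $A<bc$ it decreases from its value $b/A$ at $k=0$, so the left side is $\le (b/A)^{1/c}=b^{1/c}r_0$. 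Either way the bound holds, and since the stated constant satisfies $U=2^{1/c}c^{-2/c-1}+c^{-1/c}\ge c^{-1/c}$, the proof concludes. I expect the main obstacle to be precisely this last conversion rather than any single estimate: the naive split of $\tfrac{b(k+1)}{A+cbk}$ into two crude pieces fails for $c<1$, where $t\mapsto t^{1/c}$ is convex and the difference $(\tfrac1c+t)^{1/c}-t^{1/c}$ is unbounded, so one must instead exploit the exact monotonicity in $k$ and case-split on the sign of $A-bc$ to keep the dependence on $r_0$ under control.
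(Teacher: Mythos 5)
Your proof is correct, but it follows a genuinely different route from the paper's. The paper works with the weighted sequence $u_k := \varphi(k)\, r_k$, $\varphi(k) = \left(b(k+1)\right)^{1/c}$, derives the one-step drift bound $u_{k+1}-u_k \le \left(\varphi(k+1)-\varphi(k)\right)\varphi(k)^{-1} u_k\left(1-c\,u_k^{c}\right)$, and then argues boundedness of $\{u_k\}$ by a threshold argument: the drift is negative once $u_k > c^{-1/c}$, and each increment is uniformly at most $2^{1/c} c^{-2/c-1}$, which is exactly where the constant $U$ comes from and why the bound appears directly in the stated form. You instead linearize via the reciprocal power $s_k := r_k^{-c}$, use the tangent inequality $(1-x)^{-c} \ge 1+cx$ together with the identity $s_k x_k = b$ to get the additive recursion $s_{k+1} \ge s_k + cb$, and telescope to the sharper intermediate estimate $r_k \le \left(r_0^{-c} + cbk\right)^{-1/c}$; the stated form then follows from your monotonicity case-split on the sign of $r_0^{-c} - bc$. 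Your approach buys a cleaner and strictly stronger intermediate bound (in fact you only need $U \ge c^{-1/c}$, so the term $2^{1/c}c^{-2/c-1}$ in $U$ is slack for your argument), at the price of having to dispose of the degenerate zero/absorbing cases explicitly and to perform the final conversion step; the paper's Lyapunov-style argument avoids any positivity issues and produces the target form in one pass, but its boundedness step is terser and its bound is weaker as an estimate in $k$. Both handle the non-monotonicity of the map $r \mapsto r(1-br^c)$ correctly, just by different mechanisms.
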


\begin{proof}
	Define $u_k \eqdef \varphi(k) r_k $, $\varphi(k) \eqdef  \rb{ b (k+1)}^{\nfr{1}{c}} $. Then using $\varphi(k+1) - \varphi(k) \leq \fr{1}{c} \fr{\varphi(k+1)}{k+2}$ and $1\leq \varphi(k+1) \rb{ \varphi(k) }^{-1}  \leq 2^{\nfr{1}{c}}$, we obtain
	\begin{eqnarray}  
		u_{k+1} - u_k &=& \varphi(k+1) r_{k+1} -\varphi(k) r_k \notag \\
		&\leq & \rb{ \varphi(k+1) -\varphi(k)} r_k - b \varphi(k+1) r_k^{1+c}\notag \\
		&= &   \rb{ \varphi(k+1) -\varphi(k)} \rb{ \varphi(k) }^{-1}  u_k - b \varphi(k+1) \rb{\varphi(k)}^{-1-c}  u_k^{1+c}  \notag \\
		&= &   \rb{ \varphi(k+1) -\varphi(k)} \rb{ \varphi(k) }^{-1}  u_k \rb{ 1 - \fr{\varphi(k+1) u_k^{c} }{(k+1) \rb{\varphi(k+1) - \varphi(k)}} }\notag \\
		& \leq &   \rb{ \varphi(k+1) -\varphi(k)} \rb{ \varphi(k) }^{-1}  u_k \rb{ 1 - c u_k^{c}  } .\notag 
	\end{eqnarray}
	It follows from the above recursion that the sequence $\cb{u_k}_{k\geq 0}$ is bounded for all $k$. Indeed, define $F(k, u) \eqdef \rb{ \varphi(k+1) -\varphi(k)} \rb{ \varphi(k) }^{-1}  u \rb{ 1 - c u^{c}  }$. Notice that for all $k\geq 0$ and $u > c^{-\nfr{1}{c}}$ we have $F(k, u) < 0$ and for all $k, u \geq 0$ we have $F(k,u) \leq 2^{\nfr{1}{c}} \cdot  c^{-\fr{2}{c} -1 }$. Now it is straightforward to see that $u_k \leq u_0 + 2^{\nfr{1}{c}} \cdot  c^{-\fr{2}{c} -1 } +c^{-\nfr{1}{c}}$. It only remains to return to $r_k$ sequence to obtain the desired result. 
\end{proof}
\begin{lemma}[Lemma 4 of \citep{PAGE}]\label{le:rec_page_online}
	Let Assumptions~\ref{as:UBV} and \ref{as:avg_smoothness_page} hold, and let for $\chi \sim \text{Bernoulli}(p)$ and $g_t \in \R^d$, we construct $g_{t+1}$ via 
	\begin{equation}
	g_{t+1} = \begin{cases}
				\frac{1}{b}\sum_{i=1}^{b} \nabla f_{\xi_{t+1}^i}(x_{t+1})   & \text{if} \quad\  \chi = 1,\\
				g_{t} +  \frac{1}{b^{\prime}}\sum_{i=1}^{b^{\prime}} \rb{ \nabla f_{\xi_{t+1}^i}(x_{t+1}) - \nabla f_{\xi_{t+1}^i}(x_{t}) }  &\text{if} \quad\   \chi = 0 .
			\end{cases}
	\end{equation}
	Then 
	\begin{equation}\label{eq:rec_page_online}
		G_{t+1} - G_t\leq - p { G_t} + \fr{ (1-p) \cL^2}{b^{\prime}} R_t + \fr{p\sigma^2}{2 b },
	\end{equation}
	where $G_t \eqdef  \Exp{\fr{1}{2}\sqnorm{g_t - \nfxt}}$, $R_n \eqdef \Exp{\fr{1}{2}\sqnorm{\xtpo - \xt} }$.
\end{lemma}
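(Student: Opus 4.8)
The plan is to condition on the $\sigma$-algebra $\mathcal{F}_t$ generated by everything available before the fresh samples $\cb{\xi_{t+1}^i}$ and the coin $\chi$ are drawn (equivalently, on $\xt$, $g_t$, and $\xtpo = \xt - \eta g_t$), and to split the analysis according to the outcome of $\chi \sim \text{Bernoulli}(p)$. Write $e_t \eqdef g_t - \nfxt$ for the gradient error, so that $G_t = \fr{1}{2}\Exp{\sqnorm{e_t}}$; since $\xtpo$ is a deterministic function of $\xt$ and $g_t$, both $e_t$ and the increment $\xtpo - \xt$ are fixed under this conditioning, and I will denote the resulting conditional expectation by $\Expu{t}{\cdot}$. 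First I would treat the branch $\chi = 1$ (probability $p$), where $g_{t+1} = \fr{1}{b}\sum_{i=1}^{b}\nabla f_{\xi_{t+1}^i}(\xtpo)$ is an unbiased, fresh minibatch estimator of $\nfxtpo$; Assumption~\ref{as:UBV} applied with batch size $b$ gives the conditional variance bound $\Expu{t}{\sqnorm{g_{t+1} - \nfxtpo}} \leq \fr{\sigma^2}{b}$.

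Next I would treat the branch $\chi = 0$ (probability $1-p$). The key algebraic step is the decomposition
\[
   g_{t+1} - \nfxtpo = e_t + \rb{\wt{\Delta} - \Delta},
\]
where $\wt{\Delta} \eqdef \fr{1}{b^{\prime}}\sum_{i=1}^{b^{\prime}}\rb{\nabla f_{\xi_{t+1}^i}(\xtpo) - \nabla f_{\xi_{t+1}^i}(\xt)}$ and $\Delta \eqdef \nfxtpo - \nfxt$. Under the conditioning $e_t$ is constant, while $\wt{\Delta} - \Delta$ has zero conditional mean by the unbiasedness part of Assumption~\ref{as:avg_smoothness_page}, so the cross term vanishes and
\[
   \Expu{t}{\sqnorm{g_{t+1} - \nfxtpo}} = \sqnorm{e_t} + \Expu{t}{\sqnorm{\wt{\Delta} - \Delta}} \leq \sqnorm{e_t} + \fr{\cL^2}{b^{\prime}}\sqnorm{\xtpo - \xt},
\]
where the final inequality is exactly the average $\cL$-smoothness bound of Assumption~\ref{as:avg_smoothness_page}.

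Finally I would combine the two branches with weights $p$ and $1-p$, take the total expectation, substitute $R_t = \fr{1}{2}\Exp{\sqnorm{\xtpo - \xt}}$, and divide by $2$, obtaining $G_{t+1} \leq (1-p)G_t + \fr{(1-p)\cL^2}{b^{\prime}}R_t + \fr{p\sigma^2}{2b}$; subtracting $G_t$ then yields the claimed recursion \eqref{eq:rec_page_online}. The only point requiring genuine care — and the main (though mild) obstacle — is the measurability bookkeeping in the $\chi=0$ branch: one must verify that $e_t$ is truly $\mathcal{F}_t$-measurable so that its cross term with the mean-zero deviation $\wt{\Delta} - \Delta$ vanishes, and that the bounded-variance and average-smoothness assumptions are each invoked on the correct branch and batch size. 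Everything else is a routine convex combination of the two conditional estimates.
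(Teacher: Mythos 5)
Your proposal is correct and follows essentially the same route as the paper's proof: split over the Bernoulli outcome, bound the $\chi=1$ branch by Assumption~\ref{as:UBV} with batch size $b$, decompose the $\chi=0$ branch error as $\rb{g_t - \nabla f(x_t)} + \rb{\wt{\Delta} - \Delta}$ so that the cross term vanishes in expectation, and invoke Assumption~\ref{as:avg_smoothness_page} before recombining with weights $p$ and $1-p$. The only difference is cosmetic: you make the conditioning on $\mathcal{F}_t$ and the measurability of $g_t - \nabla f(x_t)$ explicit, whereas the paper carries out the same orthogonality step directly at the level of total expectations.
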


\begin{proof}

\begin{eqnarray*}
G_{t+1} &=&\mathbb{E}\left[\fr{1}{2}\left\|g_{t+1}-\nabla f\left(x_{t+1}\right)\right\|^{2}\right]  \\
&=& p \mathbb{E}\left[\fr{1}{2}\left\|\frac{1}{b}\sum_{i=1}^{b} \nabla f_{\xi_{t+1}^i}(x_{t+1}) - \nabla f\left(x_{t+1}\right)\right\|^{2}\right] \\
&& \qquad + \left(1-p\right) \Exp{\fr{1}{2}\sqnorm{	g_{t} +  \frac{1}{b^{\prime}}\sum_{i=1}^{b^{\prime}} \rb{ \nabla f_{\xi_{t+1}^i}(x_{t+1}) - \nabla f_{\xi_{t+1}^i}(x_{t}) } - \nabla f\left(x_{t+1}\right)}}  \\
&\leq& \frac{p \sigma^{2}}{2b} + \left(1-p\right) \Exp{\fr{1}{2}\sqnorm{	g_{t} +  \frac{1}{b^{\prime}}\sum_{i=1}^{b^{\prime}} \rb{ \nabla f_{\xi_{t+1}^i}(x_{t+1}) - \nabla f_{\xi_{t+1}^i}(x_{t}) } - \nabla f\left(x_{t+1}\right)}} \\
&=&\ \frac{p \sigma^{2}}{2b} +\left(1-p\right)  \Exp{\fr{1}{2}\sqnorm{ g_{t}-\nabla f\left(x_{t}\right)+\widetilde{\Delta}(x_{t+1}, x_t) - \Delta(x_{t+1}, x_t) }}  \\
&=& \frac{p \sigma^{2}}{2b} + \left(1-p\right)\Exp{\fr{1}{2}\sqnorm{ g_{t}-\nabla f\left(x_{t}\right) } }  +\left(1-p\right)  \Exp{\fr{1}{2}\sqnorm{\widetilde{\Delta}(x_{t+1}, x_t) - \Delta(x_{t+1}, x_t) }} \\
&\leq&  \left(1-p\right)\Exp{\fr{1}{2}\sqnorm{ g_{t}-\nabla f\left(x_{t}\right) } }   + \fr{ (1-p) \cL^2}{b^{\prime}} \Exp{\fr{1}{2}\sqnorm{x_{t+1} - x_t}} + \frac{p \sigma^{2}}{2b}\\
&=&  \left(1-p\right)G_t   + \fr{ (1-p) \cL^2}{b^{\prime}} R_t + \frac{p \sigma^{2}}{2b},
\end{eqnarray*}
where the first inequality holds by Assumption~\ref{as:UBV} and the second inequality is due to Assumption~\ref{as:avg_smoothness_page} with $\widetilde{\Delta}(x, y) \eqdef  \frac{1}{b^{\prime}}\sum_{i=1}^{b^{\prime}} \rb{ \nabla f_{\xi_{t+1}^i}(x) - \nabla f_{\xi_{t+1}^i}(y) }  $, $\Delta(x,y) \eqdef \nabla f(x) - \nabla f(y)$, $x = x_{t+1}$, $y = x_t$.

\end{proof}

\begin{lemma}\label{le:PAGE_online}
	Let $f(\cdot)$ satisfy Assumptions~\ref{ass:l-smooth}, \ref{as:lojasiewicz}, \ref{as:UBV} and \ref{as:avg_smoothness_page}. Assume that the step-size in Algorithm~\ref{alg:PAGE} satisfies
	\begin{equation}\label{eq:PAGE_sz_online}
		\eta \leq \min\cb{ \fr{1}{2 L} ,  \sqrt{\fr{p b^{\prime}}{1-p}} \fr{1}{2\cL}  }.
	\end{equation}
   Define $\Psi_t \eqdef \Exp{ f(\xt)  - f(\xstar) + \lambda \sqnorm{g_t - \nfxt} }$, $ \lambda \eqdef \fr{ b^{\prime} } { 4\eta (1-p) \cL^2} $. Then Algorithm~\ref{alg:PAGE} generates a sequence of points $\cb{x_t}_{t\geq 0}$ such that
   	\begin{eqnarray}\label{eq:online_PAGE_main}
		\Psi_{t+1} - \Psi_t &\leq&  -  \eta \mu \rb{\Psi_t - \lambda G_t}^{\fr{2}{\al}} -   \fr{p \lambda}{2}  G_t  +  \fr{p \lambda}{2} \fr{\sigma^2}{b}.  
	\end{eqnarray}
\end{lemma}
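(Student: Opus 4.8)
The plan is to assemble the one-step Lyapunov inequality by adding a $\lambda$-weighted copy of the \algname{PAGE} variance recursion to the descent inequality, and then to convert the surviving gradient-norm term into a power of the function gap via the $\al$-P{\L} condition. Concretely, I would first invoke the descent lemma (Lemma~\ref{le:aux_smooth_lemma}) for the update $\xtpo = \xt - \eta g_t$, take total expectation, and rewrite everything in terms of $R_t \eqdef \Exp{\fr12 \sqnorm{\xtpo - \xt}}$ and $G_t$. Using $\Exp{\sqnorm{g_t - \nfxt}} = 2 G_t$, this yields
$$\delta_{t+1} \leq \delta_t - \fr{\eta}{2}\Exp{\sqnorm{\nfxt}} - \rb{\fr1\eta - L} R_t + \eta G_t .$$

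Next I would add $\lambda$ times the variance recursion of Lemma~\ref{le:rec_page_online}, namely $G_{t+1} - G_t \leq -p G_t + \fr{(1-p)\cL^2}{b^{\prime}} R_t + \fr{p\sigma^2}{2b}$. Writing $\Psi_t = \delta_t + \lambda G_t$, the combined right-hand side splits into the gradient-norm term, an $R_t$ term with coefficient $-\rb{\fr1\eta - L} + \fr{\lambda(1-p)\cL^2}{b^{\prime}}$, a $G_t$ term with coefficient $\eta - p\lambda$, and the noise term $\fr{p\lambda\sigma^2}{2b} = \fr{p\lambda}{2}\fr{\sigma^2}{b}$, which already matches the target.

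The crux is the weight $\lambda \eqdef \fr{b^{\prime}}{4\eta(1-p)\cL^2}$, engineered so that the two bounds of \eqref{eq:PAGE_sz_online} tame both coefficients simultaneously. With this $\lambda$ the $R_t$ coefficient equals $-\fr{3}{4\eta} + L$, and the first bound $\eta \le \fr{1}{2L}$ forces it to be at most $-\fr{1}{4\eta} \le 0$, so the movement term is dropped. The second, $\cL$-dependent bound is chosen precisely so that $p\lambda \ge 2\eta$ (this is what pins down the absolute constant in \eqref{eq:PAGE_sz_online}), whence the $G_t$ coefficient satisfies $\eta - p\lambda \le -\fr{p\lambda}{2}$. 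Balancing the exact cancellation of $R_t$ against a strictly negative $G_t$ budget through a single choice of $\lambda$ is the main obstacle, since it couples the two recursions and fixes the step-size constants.

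Finally I would process the surviving term $-\fr{\eta}{2}\Exp{\sqnorm{\nfxt}}$. Applying Assumption~\ref{as:lojasiewicz} pointwise gives $\sqnorm{\nfxt} \ge 2\mu \rb{f(\xt) - f^{\star}}^{2/\al}$; since $2/\al \ge 1$, the map $t \mapsto t^{2/\al}$ is convex on $\R^+$, so Jensen's inequality yields $\Exp{\rb{f(\xt)-f^{\star}}^{2/\al}} \ge \delta_t^{2/\al} = \rb{\Psi_t - \lambda G_t}^{2/\al}$, using $\Psi_t - \lambda G_t = \delta_t \ge 0$. Hence $-\fr{\eta}{2}\Exp{\sqnorm{\nfxt}} \le -\eta\mu \rb{\Psi_t - \lambda G_t}^{2/\al}$. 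Collecting the three surviving contributions reproduces \eqref{eq:online_PAGE_main} exactly. The only delicate points I anticipate are the constant bookkeeping in the step-size discussed above and the legitimacy of the pointwise-P{\L}-then-Jensen exchange, which relies on $f - f^{\star} \ge 0$ and $\al \le 2$.
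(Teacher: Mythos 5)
Your proposal follows the paper's proof essentially step for step: Lemma~\ref{le:aux_smooth_lemma} for the descent inequality, Lemma~\ref{le:rec_page_online} for the variance recursion, the Lyapunov combination $\Psi_t = \delta_t + \lambda G_t$, and the P{\L}-plus-Jensen conversion of the gradient term (the paper applies P{\L} immediately after the descent step and you apply it at the end, which is immaterial). However, your constant bookkeeping at the decisive step does not close. With the statement's $\lambda = \fr{b^{\prime}}{4\eta(1-p)\cL^2}$, the stated bound $\eta \le \sqrt{\fr{p b^{\prime}}{1-p}}\fr{1}{2\cL}$ is equivalent to $\eta^2 \le \fr{p b^{\prime}}{4(1-p)\cL^2}$, i.e.\ to $\eta \le p\lambda$ --- \emph{not} to $p\lambda \ge 2\eta$ as you claim. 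Consequently your $G_t$ coefficient $\eta - p\lambda$ is only guaranteed to be $\le 0$, not $\le -\fr{p\lambda}{2}$, so the term $-\fr{p\lambda}{2}G_t$ in \eqref{eq:online_PAGE_main} is not obtained; with your accounting the step-size would need to be smaller by a factor of $\sqrt{2}$.

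It is worth knowing that this is precisely the point where the paper itself is internally inconsistent: the paper's proof works with $\lambda \eqdef \fr{b^{\prime}}{2\eta(1-p)\cL^2}$, i.e.\ \emph{twice} the value quoted in the lemma statement. With that choice, the stated step-size condition gives exactly $\eta \le \fr{p\lambda}{2}$, hence $\eta - p\lambda \le -\fr{p\lambda}{2}$, while the $R_t$ coefficient in your framing becomes $-\fr{1}{2\eta} + L \le 0$ (still nonpositive, just with zero slack instead of your $-\fr{1}{4\eta}$), and the noise term is still $\lambda\fr{p\sigma^2}{2b} = \fr{p\lambda}{2}\fr{\sigma^2}{b}$. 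So your derivation becomes a complete and correct proof if you either (i) take $\lambda = \fr{b^{\prime}}{2\eta(1-p)\cL^2}$ as the paper's proof actually does, or (ii) keep the statement's $\lambda$ but strengthen the step-size condition to $\eta \le \sqrt{\fr{p b^{\prime}}{1-p}}\fr{1}{2\sqrt{2}\,\cL}$. Everything else in your argument --- the descent step, the splitting of coefficients, and the pointwise-P{\L}-then-Jensen exchange (valid since $f - f^{\star}\ge 0$ and $t\mapsto t^{2/\al}$ is convex for $\al\le 2$, so $\Exp{\sqnorm{\nabla f(x_t)}} \ge 2\mu\,\delta_t^{2/\al}$ with $\delta_t = \Psi_t - \lambda G_t$) --- is correct.
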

\begin{proof}
	Using the notation $G_t \eqdef  \Exp{\fr{1}{2}\sqnorm{g_t - \nfxt}}$, $R_t \eqdef \Exp{\fr{1}{2}\sqnorm{\xtpo - \xt} }$, $\delta_t \eqdef \Exp{f(\xt) - f(\xstar)}$ and assumption on the step-size $\eta \leq \fr{1}{2L}$, it follows by Lemma~\ref{le:aux_smooth_lemma} that 
	\begin{eqnarray}
		\delta_{t+1} - \delta_t \leq  -  \fr{\eta}{2}\Exp{\sqnorm{\nabla f(x_{t})}} - \fr{1}{2\eta} R_t+{\eta} G_t. \notag
	\end{eqnarray}
	
	Using Assumption~\ref{as:lojasiewicz}, Jensen's inequality for $x \mapsto x^{\fr{2}{\al}}$, we get 
	\begin{eqnarray}
		\delta_{t+1} - \delta_t \leq  -  \eta \mu \delta_t^{\fr{2}{\al}} + {\eta} G_t - \fr{1}{2\eta} R_t . \notag
	\end{eqnarray}
	For $p < 1$, it follows from Lemma~\ref{le:rec_page_online} that
	\begin{equation}
		- R_t  \leq - \fr{b^{\prime}} { (1-p) \cL^2} \rb{G_{t+1} - G_{t}} - \fr{p b^{\prime}} { (1-p) \cL^2}  { G_t} + \fr{b^{\prime}} { (1-p) \cL^2} \fr{p\sigma^2}{2 b } .\notag
	\end{equation}
	Thus, combining the above two inequalities, we get
	\begin{eqnarray}
		\delta_{t+1} - \delta_t + \fr{1}{2\eta} \fr{b^{\prime}} { (1-p) \cL^2} \rb{G_{t+1} - G_{t}} \leq  -  \eta \mu \delta_t^{\fr{2}{\al}} - \rb{ \fr{1}{2\eta}   \fr{p b^{\prime} } { (1-p) \cL^2}   - \eta  } G_t  + \fr{1}{2\eta} \fr{b^{\prime}} { (1-p) \cL^2} \fr{p\sigma^2}{2b } . \notag
	\end{eqnarray}
	Let $\Psi_t \eqdef \delta_t + \lambda G_t$, $\lambda \eqdef \fr{b^{\prime} } { 2\eta (1-p) \cL^2} $. 
	Using the assumption on the step-size, $\eta \leq \sqrt{\fr{p b^{\prime} }{4(1-p)\cL^2}}$,  we get
	\begin{eqnarray}
		\Psi_{t+1} - \Psi_t &=& \delta_{t+1} - \delta_t +\lambda \rb{G_{t+1} - G_{t}} \notag \\
		 &=& \delta_{t+1} - \delta_t +\fr{b^{\prime} } { 2\eta (1-p) \cL^2} \rb{G_{t+1} - G_{t}} \notag \\
		&\leq&  -  \eta \mu \delta_t^{\fr{2}{\al}} -   \fr{p b^{\prime}} {4\eta  (1-p) \cL^2}   G_t + \fr{1}{2\eta} \fr{b^{\prime}} { (1-p) \cL^2} \fr{p\sigma^2}{2 b }   \notag\\
		&=&  -  \eta \mu \delta_t^{\fr{2}{\al}} -  \fr{p \lambda}{2}  G_t  +  \fr{p \lambda}{2} \fr{\sigma^2}{b}   \notag \\
        &=& -  \eta \mu \rb{\Psi_t - \lambda G_t}^{\fr{2}{\al}}  -  \fr{p \lambda}{2}  G_t  +  \fr{p \lambda}{2} \fr{\sigma^2}{b}  . \notag
	\end{eqnarray}
\end{proof}

\section{Convergence in the Iterates}
In this Section, we assume that $\al$-P{\L} condition holds with $\al \in (1,2]$. We provide convergence guaranties in the \textit{iterates} to the set of optimal points $X^{\star}$, which we assume to be non-empty. The sample complexity results are summarized in Table~\ref{tbl:complexity_x}. The results in Table~\ref{tbl:complexity_x} are obtained by translating the sample complexity results reported in Table~\ref{tbl:complexity_f} to convergence in the iterates via Proposition~\ref{prop:f_to_x}. Note that in the special case $\al = 2$, our rates in both Tables~\ref{tbl:complexity_f} and \ref{tbl:complexity_x} recover the optimal rates for online case \cite{Karimi_PL,Gower_SGD_QC_PL_21,khaled2020better} and the best known results for finite sum case \cite{reddi2016stochastic,PAGE}. \footnote{While our analysis for variance reduction formally holds for $\al < 2$ only, the special case $\al = 2$ can be easily recovered via standard techniques, e.g., \cite{reddi2016stochastic,PAGE}. }

\begin{proposition}\label{prop:f_to_x}
	Let Assumption~\ref{as:lojasiewicz} hold with $\al \in (1, 2]$ and the set of optimal points $X^{\star} \eqdef \argmin_{x}f(x)$ is not empty. Then 
	\begin{eqnarray}\label{eq:f_to_x}
		dist\rb{x, X^\star} \leq \fr{\al}{\al - 1} \fr{1}{\sqrt{2\mu}} \rb{ f(x) - f^{\star} }^{\fr{\al - 1}{\al}} \quad \text{for all } x\in \R^d, 
	\end{eqnarray}
	where $dist\rb{x, X^\star} \eqdef \min_{y\in X^{\star}} \norm{ y - x }$.
\end{proposition}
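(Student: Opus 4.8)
The plan is to invoke the classical \L ojasiewicz argument that a gradient flow started at $x$ has finite length, and that this length dominates $dist(x, X^\star)$. First I would consider the gradient flow $\dot{z}(t) = -\nabla f(z(t))$ with $z(0) = x$. By Assumption~\ref{ass:l-smooth} ($L$-smoothness) the right-hand side is Lipschitz, so the flow exists and is unique on a maximal interval; since $f$ decreases along the flow and is bounded below by $f^\star$, the solution cannot blow up in finite time and extends to all $t \geq 0$. Writing $g(t) := f(z(t)) - f^\star \geq 0$, we have $\dot{g}(t) = -\norm{\nabla f(z(t))}^2 \leq 0$.

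The key step is to use the desingularizing function $\psi(s) := \fr{\al}{\al - 1}\fr{1}{\sqrt{2\mu}}\, s^{\fr{\al-1}{\al}}$, chosen so that $\psi'(s) = \fr{1}{\sqrt{2\mu}}\, s^{-1/\al} = 1/\phi(s)$, where $\phi(s) = \sqrt{2\mu}\, s^{1/\al}$ is the function from Assumption~\ref{as:lojasiewicz}. Along the flow, as long as $z(t) \notin X^\star$ (equivalently $g(t) > 0$, which by the $\al$-P{\L} condition forces $\nabla f(z(t)) \neq 0$), the chain rule gives
$$\fr{d}{dt}\psi(g(t)) = \psi'(g(t))\,\dot{g}(t) = -\fr{\norm{\nabla f(z(t))}^2}{\phi(g(t))} \leq -\norm{\nabla f(z(t))},$$
where the inequality uses $\norm{\nabla f(z(t))} \geq \phi(g(t))$. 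Integrating over $[0,T]$ and using $\norm{\dot z(t)} = \norm{\nabla f(z(t))}$ bounds the trajectory length by $\psi(g(0)) - \psi(g(T)) \leq \psi(g(0)) = \fr{\al}{\al-1}\fr{1}{\sqrt{2\mu}}(f(x) - f^\star)^{\fr{\al-1}{\al}}$. Here the finiteness (and the precise constant $\al/(\al-1)$) crucially relies on $\al > 1$, which makes $s^{-1/\al}$ integrable near $0$.

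Finite length implies $z(t)$ is Cauchy as $t \to \infty$, hence converges to some $x_\infty$, and by continuity $\nabla f(x_\infty) = 0$; the $\al$-P{\L} inequality then forces $f(x_\infty) = f^\star$, so $x_\infty \in X^\star$. Finally,
$$dist(x, X^\star) \leq \norm{x - x_\infty} \leq \int_0^\infty \norm{\dot z(t)}\,dt \leq \fr{\al}{\al-1}\fr{1}{\sqrt{2\mu}}(f(x) - f^\star)^{\fr{\al-1}{\al}},$$
which is the claim.

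The main obstacle I anticipate is the analytic bookkeeping around the flow: justifying global existence and the convergence of $z(t)$ to a single limit point in $X^\star$ (rather than merely accumulating on $X^\star$), and handling the degenerate case where $g(t)$ reaches $0$ in finite time (in which case the trajectory simply stops and the bound holds trivially with the remaining length zero). A discrete-time alternative avoiding flows could be substituted, but the continuous argument is cleanest; the only place the structure genuinely matters is the integrability of $1/\phi$ at the origin, which is exactly where the restriction $\al \in (1,2]$ enters.
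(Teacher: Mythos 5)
Your proposal is correct and matches the paper's intended argument: the paper proves this proposition by invoking the gradient-flow/path-length argument of Theorem 2 in Karimi et al.\ (the $\al=2$ case) with the desingularizing function $\rb{f(x)-f^\star}^{\fr{\al-1}{\al}}$, which is exactly (up to the constant $\fr{\al}{\al-1}\fr{1}{\sqrt{2\mu}}$) your $\psi$. Your write-up simply carries out in full the steps the paper leaves implicit, including the global existence of the flow, the finite-length bound, and the convergence of the trajectory to a point of $X^\star$.
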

The above result can be obtained by following the argument similar to the proof of Theorem 2 in \citep{Karimi_PL} (where it is shown for a particular case $\al=2$). The only difference is that one should take a disingularizing function as $g(x) = \rb{f(x) - f^\star}^{\fr{\al - 1}{\al}}$, where $f^\star = \min_{x} f(x)$. This result immediately implies convergence in the iterates via
\begin{eqnarray}
\Exp{ \min_{y\in X^{\star}}  \norm{ x - y}} &=& \Exp{dist\rb{x,X^\star}}  \notag \\
&\overset{\eqref{eq:f_to_x}}{\leq}&  \fr{\al}{\al - 1} \fr{1}{\sqrt{2\mu}} \Exp{ \rb{f(x) - f^\star }^{\fr{\al-1}{\al}} } \notag \\
&\leq&  \fr{\al}{\al - 1} \fr{1}{\sqrt{2\mu}} \rb{\Exp{f(x) - f^\star}}^{\fr{\al-1}{\al}}, \end{eqnarray}
where the last inequality holds by Jensen's inequality for a concave function $ t \mapsto t^{\fr{\al- 1 }{\al}}$.

\begin{table}[h!]
	\caption{Summary of sample complexity results for $\al$-P{\L} functions (Assumption~\ref{as:lojasiewicz}) with $\al \in (1,2]$ under average $\cL$-smoothness (Assumptions~\ref{as:avg_smoothness_page}) and bounded variance (Assumptions~\ref{as:UBV}). Quantities: $\al$ = PL power; $\mu$ = PL constant; $\kappa = \nfr{\cL}{\mu}$; $\sigma^2$ = variance. The entries of the table show the expected number of stochastic gradient calls to achieve  $\Exp{ dist\rb{x, X^\star} } \leq \epsilon_x$, where $X^{\star} \neq \emptyset$ is the set of optimal points of $f(\cdot)$.\vspace{.1cm} }
	\label{tbl:complexity_x}
	\footnotesize
	\centering
	\begin{tabular}{|c|c|c|c|}
		\hline
		\bf Method & \bf Finite sum case & \bf Online case \\
		\hline
		\begin{tabular}{c}\algname{GD} \end{tabular}  & $\cO\rb{ {n \kappa} {\mu^{\fr{\al - 2}{2(\al - 1)}}} \rb{\fr{1}{\epsilon_x}}^{\fr{2-\al}{\al-1}} }$ & N/A \\
		\hline
		\begin{tabular}{c}\algname{SGD} \end{tabular}  & $ \cO\rb{ \kappa \sigma^2 \mu^{\fr{\al + 2}{2(1-\al)}} \rb{\fr{1}{\epsilon_x}}^{\fr{4-\al}{\al - 1}} }$ & $ \cO\rb{ \kappa \sigma^2 \mu^{\fr{\al + 2}{2(1-\al)}} \rb{\fr{1}{\epsilon_x}}^{\fr{4-\al}{\al - 1}} }$ \\
		\hline
		\rowcolor{LightCyan}
		\begin{tabular}{c}\algname{PAGER} \end{tabular}  & $ \cwO\rb{ n + \sqrt{n}\kappa  \mu^{\fr{\al - 2}{2(\al-1)}} \rb{\fr{1}{\epsilon_x}}^{\fr{2-\al}{\al - 1}} }$ (new)  & $ \cO\rb{ \rb{ \fr{\sigma^2}{\mu} + \kappa^2 } \mu^\fr{1}{1 - \al } \rb{\fr{1}{\epsilon_f}}^{\fr{2}{\al - 1}} }$ (new) \\
	\hline
\end{tabular}
\end{table}

\section{Simulations}
In this section, we perform numerical tests to evaluate the performance of the discussed methods. Our experiments are based on the RL setup described in Example~\ref{ex:PO_RL} since we believe that it is one of the most interesting applications of our theoretical results. The goal of our experiments is twofold. First, we want to make sure that variance reduction technique is useful in maximizing a cumulative reward for policy optimization tasks. Second, it is interesting to find out if the restarting procedure in \algname{PAGER} is helpful in practice. 
\paragraph{Algorithmic adjustments.} In order to make Algorithms~\ref{alg:sgd} and \ref{alg:PAGE_w_restarts} applicable to the setup of Example~\ref{ex:PO_RL}, one needs to make some standard adjustments. First, we should specify the way the stochastic gradient is computed. In our experiments, we use the standard GPOMDP estimator \citep{Baxter_GPOMDP}, which is given by
$$
g_k(\theta, \tau) := \fr{1}{b_k}\sum_{i=1}^{b_k}\sum_{h = 0}^{H-1} \gamma^h r(s_h^i, a_h^i) Z_{\theta, h},
$$
where $Z_{\theta, h} := \sum_{z=0}^{h} \nabla_\theta \log \pi_\theta (a_z^i | s_z^i)$, $\tau := \cb{(s_h^i, a_h^i)}_{h=0}^{H-1}$ is generated according to the trajectory distribution $p(\tau | \pi_\theta)$,  $\pi_\theta$ is the parametric policy and $H$ is the horizon length of an episode. Second, the data distribution changes over iterations (distribution shift), and one needs to use an importance weighting technique in order to apply variance reduction methods \citep{papini18a}. Importance weighting is implemented as
$$
g_{k, \omega_{\theta_{2}}}^{\prime}(\theta_{1}, \tau) :=  \fr{1}{b_k^{\prime}}\sum_{i=1}^{b_k^{\prime}} \omega(\tau_i | \theta_2, \theta_1) \sum_{h = 0}^{H-1} \gamma^h r(s_h^i, a_h^i) Z_{\theta, h} \qquad \omega(\tau_i | \theta_2, \theta_1) : = \Pi_{j=0}^{H-1} \fr{\pi_{\theta_1}(a_j^i | s_j^i)}{\pi_{\theta_2}(a_j^i | s_j^i)}.
$$
Given the above notation \algname{PAGE} gradient estimator can be computed as
$$g_{t+1} = \begin{cases}
     g_k(\theta_{t+1}, \tau_{t+1}) ,  & \text{w.p.} \quad\  p,\\
        g_{t} +  g_k^{\prime}(\theta_{t+1}, \tau_{t+1}) - g_{k, \omega_{\theta_{t+1}}}^{\prime}(\theta_{t}, \tau_{t})   , &\text{w.p.}\  1-p.
	\end{cases}
$$
\paragraph{Experimental setup.} We test the discussed methods on benchmark RL environments CartPole and Acrobot that are available on OpenAI gym \citep{brockman2016openai}. Both environments have discrete action space and continuous state space. We use a neural network with two hidden layers of width 32 each and Tanh activation function. We set parameters by default as $H=200$, $\gamma = 0.9999$ and initialize all runs with the same randomly generated policy. For \algname{SGD}, we use $T=1$, $b=50$. For \algname{PAGE} we use $b = 50$, $b^{\prime} = 5$, $p = 0.1$. For \algname{PAGER}, we set initial batch-sizes as $b_0^{\prime} = 15$, $b_0 = 5$, $p_0 = 1$ $T_0 = 50$ and change the values from one stage to another based the formulas given by Theorem~\ref{thm:PAGE_online_w_restarts1} (with $\alpha = 1$). We tune step-sizes from the set $\cb{10^{-5}, 2 \cdot 10^{-5}, \ldots 2^{6} \cdot 10^{-5}}$ and select the one that gives the best performance based on the average reward in the last $10$ iterations. The convergence curves Figure~\ref{fig:RL_exp} are calculated as the mean over multiple runs with fixed parameters, the shaded regions represent one standard deviation.  
\paragraph{Results.} The empirical results shown in Figure~\ref{fig:RL_exp} seem to be in line with our theoretical findings (Theorem~\ref{thm:PAGE_online_w_restarts1}). There are two interesting observations. First, \algname{SGD} requires more time to converge compared to variance reduced methods. The difference is especially tangible for CartPole environment, where \algname{PAGER} stabilizes at the maximal average reward $3$ \textit{times faster} than \algname{SGD}. This is in line with the theoretical sample complexity gap between \algname{PAGER} -- $\cO(\epsilon_f^{-2})$ and \algname{SGD} -- $\cO(\epsilon_f^{-3})$. Second, \algname{PAGER} converges much faster than its (non-restarted) variant \algname{PAGE} on CartPole task, which shows empirically \textit{the benefit of the restarting procedure}. Moreover, the behavior of \algname{PAGER} is \textit{more stable} near optimum. This observation is in accordance with the intuition described in Section~\ref{sec:variace_reduction_appendix} and our theoretical analysis because \algname{PAGER} is able to reduce the variance term in \eqref{eq:simple_recursion_appendix_2} at the desired rate by varying parameters $p$ and $b$ over time.

\begin{figure}[t]
        \centering
        \includegraphics[width=\textwidth]{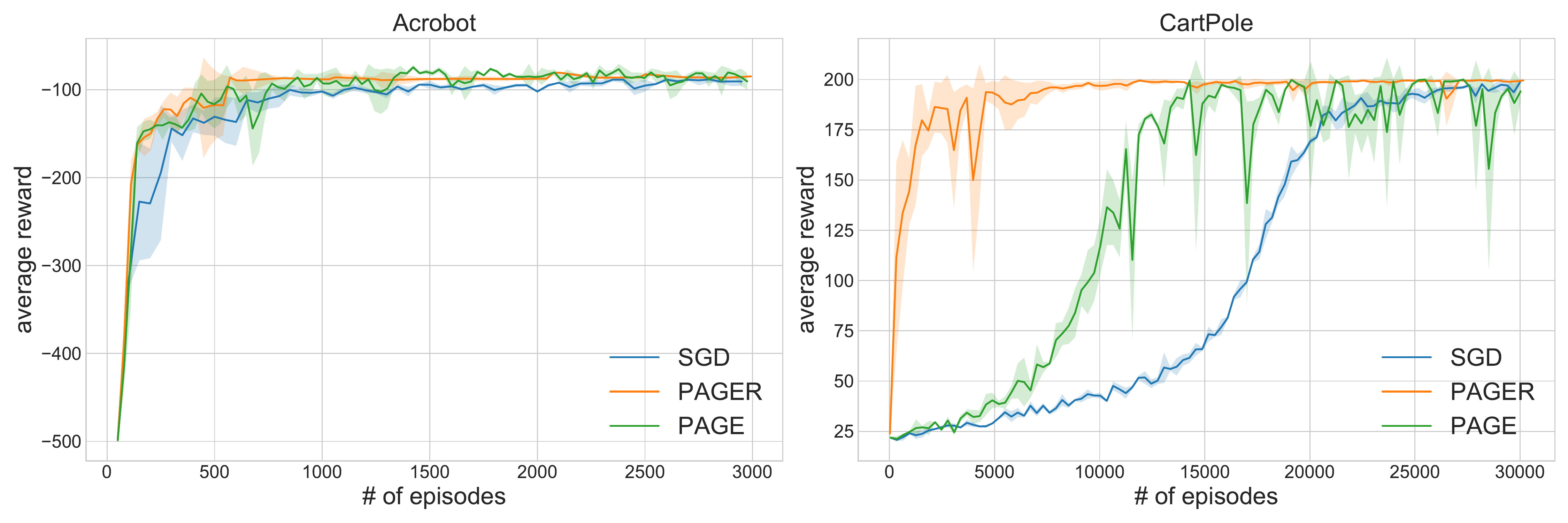}
        \caption{Performance of $\algname{SGD}$, $\algname{PAGER}$ and $\algname{PAGE}$ on benchmark RL tasks.}
        \label{fig:RL_exp}
\end{figure}

\end{document}